\documentclass[10pt,reqno,a4paper]{amsart}

\usepackage{amsmath,amssymb,amsthm,mathtools,dsfont,bm,bbm,mathrsfs,color} 

\newtheorem{lem}{Lemma}[section]
\newtheorem{prop}{Proposition}[section]
\newtheorem{cor}{Corollary}[section]

\newtheorem{thm}{Theorem}[section]
\newtheorem*{thmn}{Theorem}

\theoremstyle{definition}

\theoremstyle{remark}

\theoremstyle{remark}

\newtheorem*{remarks*}{Remarks}
\newtheorem*{remark*}{Remark}
\numberwithin{equation}{section}

\newcommand{\AP}{\mathrm{AP}}
\newcommand{\BC}{\mathrm{BC}}

\newcommand{\C}{\mathbb{C}}
\newcommand{\N}{\mathbb{N}}
\newcommand{\Z}{\mathbb{Z}}
\newcommand{\R}{\mathbb{R}}
\newcommand{\T}{\mathbb{T}}
\newcommand{\D}{\mathbb{D}}

\newcommand{\Ss}{\mathbb{S}}

\newcommand{\Mean}{\mathscr{M}}
\newcommand{\ov}{\overline}
\newcommand{\ran}{\mathrm{Ran}}
\renewcommand{\ker}{\mathrm{Ker}}
\newcommand{\dom}{\mathrm{Dom}}
\newcommand{\rank}{\mathrm{Rank}}

\newcommand{\pt}{\partial}

\newcommand{\Cd}{\mathbb{C}^{d \times d}}

\renewcommand{\sp}{\sigma_{\mathrm{p}}}

\newcommand{\be}{\begin{equation}}
\newcommand{\ee}{\end{equation}}

\newcommand{\eps}{\varepsilon}

\newcommand{\Xu}{X_{\mathrm{u}}}
\newcommand{\Xs}{X_{\mathrm{s}}}

\newcommand{\weakto}{\rightharpoonup}
\newcommand{\ii}{i}
\newcommand{\eu}{\mathrm{e}}

\renewcommand{\rho}{\varrho}

\newcommand{\uu}{\mathbf{u}}
\newcommand{\ff}{\mathbf{f}}

\newcommand{\Vb}{\mathbf{V}}

\newcommand{\Ub}{\mathbf{U}}
\newcommand{\Fb}{\mathbf{F}}
\newcommand{\Eb}{\mathbf{E}}

\newcommand{\Gb}{\mathbf{G}}

\newcommand{\id}{I}
\newcommand{\Tr}{\mathrm{Tr}}
\newcommand{\tr}{\mathrm{tr}}

\newcommand{\Gr}{\mathsf{Gr}}
\newcommand{\Om}{\Omega}

\newcommand{\sd}{\sigma_{\mathrm{d}}}
\newcommand{\se}{\sigma_{\mathrm{e}}}

\renewcommand{\phi}{\varphi}

\newcommand{\Ks}{\mathscr{K}}

\newcommand{\Es}{\mathscr{E}}
\newcommand{\Hs}{\mathscr{H}}
\newcommand{\Us}{\mathscr{U}}
\newcommand{\Ls}{\mathscr{L}}

\newcommand{\Vs}{\mathscr{V}}

\newcommand{\Qb}{\mathbf{Q}}

\newcommand{\comment}[1]{}

\catcode`@=11
\def\section{\@startsection{section}{1}%
  \z@{1.5\linespacing\@plus\linespacing}{.5\linespacing}%
  {\normalfont\bfseries\large\centering}}
\catcode`@=12

\makeatletter
\def\@cite#1#2{[\textbf{#1\if@tempswa , #2\fi}]}
\def\@biblabel#1{[\textbf{#1}]}
\makeatother

\begin{document}
\title[HWM on $\T$: GWP in $H^{1/2}$ and almost periodicity]{The half-wave maps equation on $\T$: \\ Global well-posedness in $H^{1/2}$ \\ and Almost Periodicity}

\author{Patrick G\'erard}
\address{P. G\'erard,  Laboratoire de Math\'ematiques d'Orsay, CNRS, Universit\'e Paris-Saclay, 91405 Orsay, France.}%
\email{patrick.gerard@universite-paris-saclay.fr}

\author{Enno Lenzmann}
\address{E. Lenzmann, University of Basel, Department of Mathematics and Computer Science, Spiegelgasse 1, CH-4051 Basel, Switzerland.}%
\email{enno.lenzmann@unibas.ch}

\maketitle

\begin{abstract}
We consider the half-wave maps equation
\[
\partial_t \uu = \uu \times |D| \uu
\]
for $\uu : \R \times \T \to \Ss^2$, where $\T=\R/2 \pi \Z$ is the one-dimensional torus and $\Ss^2 \subset \R^3$ denotes the unit sphere. By extension from rational initial data, we construct a unique and continuous flow map for data in the critical energy space $H^{1/2}(\T; \Ss^2)$. Moreover, we show almost periodicity in time of these solutions. For the dense subset of rational initial data, we establish quasi-periodicity in time and a-priori bounds on $\| \uu(t) \|_{H^s(\T)}$ for any $s >0$.

Our analysis relies crucially on an explicit formula arising from the Lax pair structure acting on a Hardy space of vector-valued holomorphic functions on the unit disk. As a central ingredient, we develop a general {\em stability principle} for explicit formulae associated with completely integrable PDEs possessing a Lax pair structure on Hardy spaces, including the Benjamin--Ono equation, Calogero--Sutherland DNLS, and the half-wave-maps equation posed on $\T$.

Our results extend to the matrix-valued half-wave maps equation 
$$
\pt_t \Ub = -\frac{i}{2} [ \Ub, |D| \Ub ]
$$
with target manifold given by the complex Grassmannians $\Gr_k(\C^d)$, thereby generalizing the special case $\Ss^2 \cong \mathbb{CP}^1 \cong \Gr_1(\C^2)$. In a companion work, we prove global well-posedness for the half-wave maps equation posed on $\R$ in the scaling-critical energy space $\dot{H}^{1/2}$, by establishing a stability principle for explicit formulae on Hardy spaces in the complex half-plane $\C_+$. 
\end{abstract}

\tableofcontents

\section{Introduction} \label{sec:intro}

\subsection{Formulation of the problem} As a starting point of this paper, we consider the half-wave maps equation (HWM) posed on the one-dimensional torus $\T = \R/2 \pi \Z$ with target $\Ss^2$. The corresponding evolution equation can be written as
\be \label{eq:HWM_S2} \tag{HWM$_{\Ss^2}$}
\boxed{\pt_t \uu = \uu \times |D| \uu}
\ee
for the map $\uu : \R \times \T \to \Ss^2$. Here $\Ss^2$ is the standard unit two-sphere embedded in $\R^3$ and $\times$ stands for the vector product in $\R^3$. As usual, the operator $|D|$ denotes the first-order fractional derivative for functions on $\T$, i.e., 
$$
|D| \ff(\theta) = \sum_{n =-\infty}^{\infty} |n| \, \widehat{\ff}_n \eu^{\ii n \theta} \quad \mbox{for $\theta \in \T$}.
$$ 
We refer to \cite{ZhSt-15, LeSc-18, Ma-22, BeKlLa-20, LeSo-24, GeLe-25} for recent works on \eqref{eq:HWM} and its related version posed on $\R$.  

\medskip
The half-wave maps equation was introduced independently in \cite{LeSc-18, ZhSt-15}, motivated by the theory of half-harmonic maps and by connections with integrable Calogero--Moser spin systems. It is a Hamiltonian equation whose energy functional is
$$
E[\uu]  = \frac{1}{2} \int_\T \uu \cdot |D| \uu = \frac{1}{2} \sum_{n=-\infty}^\infty |n| |\widehat{\uu}_n|^2 =  \frac{1}{2} \| \uu \|_{\dot{H}^{1/2}}^2.
$$
The natural energy space is therefore
$$
H^{1/2}(\T; \Ss^2) = \{ \uu \in H^{1/2}(\T; \R^3) : \mbox{$\uu(\theta) \in \Ss^2$  a.e.} \}.
$$
Geometrically, this is a loop space, and \eqref{eq:HWM} may be viewed as a Hamiltonian flow on the infinite-dimensional symplectic space $H^{1/2}$-loops into $\Ss^2$. The energy is  {\em conformally invariant}. Indeed, writing
$$
E[\uu] = \frac{1}{4 \pi} \iint_{\T \times \T} \frac{|\uu(\theta)-\uu(\phi)|^2}{2 \sin^2 (\frac{\theta-\phi}{2} ) } d \theta \, d \phi = \frac{1}{4 \pi} \int_{\D} |\nabla \uu^e(x,y)|^2 \, dx \,dy,
$$
where $\uu^e : \D \to \R^3$ denotes the harmonic extension of $\uu \in H^{1/2}(\T; \Ss^2)$ to the unit disk $\D$, one obtains 
$$
E[\uu \circ \phi] = E[\uu]
$$
for every conformal automorphism $\phi : \D \to \D$. Critical points of $E$ are referred to as {\em half-harmonic maps}, whose regularity theory was developed in the seminal work of Da Lio--Rivi\`ere \cite{LiRi-11}; see also \cite{MiSi-15,LeSc-18, MaSc-18}.

Local well-posedness of \eqref{eq:HWM} for $H^s$-data with $s > \frac{3}{2}$ follows by an iterative scheme for hyperbolic systems. However, despite the complete integrability of \eqref{eq:HWM}, the global well-posedness of the Cauchy problem has remained open, even for smooth data sufficiently close to a constant state. The main obstacles are:
\begin{itemize}
\item The equation is a {\em quasi-linear} system. 
\item The operator $|D|$ provides no dispersion in one space dimension.
\item The Lax structure for \eqref{eq:HWM} does not control Sobolev norms above $H^{1/2}$.
\end{itemize}

One of the principal contributions of this work is to bridge this gap by proving global well-posedness for \eqref{eq:HWM} in the critical energy space $H^{1/2}$. The construction proceeds by first proving global well-posedness for all rational initial data, where we adapt our recent approach in \cite{GeLe-25} from $\R$ to $\T$. Now, the density of rational data in the energy space $H^{1/2}$ combined with an explicit formula for \eqref{eq:HWM} enables us to obtain a unique extension yielding weak solutions that could exhibit loss of energy. However, to construct a decent flow map for $H^{1/2}$-data, we need to prove its strong continuity, which is equivalent to proving energy conservation.  Here, a key ingredient will be a newly found {\em stability principle} for explicit formulae associated with completely integrable PDEs on Hardy spaces, such as the Benjamin--Ono equation (BO), the Calogero--Sutherland DNLS (CS-DNLS), the cubic Szeg\H{o} and (HWM) in \cite{Ge-23, Ge-24, GeLe-25, GePu-23, Ba-24, KiLaVi-24, KiLaVi-25, GaGeMi-26, Xi-25, Ma-25}. 

With regard to the general context, we emphasize the fact that the analysis of (HWM) prevents us from adapting known tools developed to prove global well-posedness results for other dispersive geometric PDEs such as the Schr\"odinger maps or wave maps equations; see e.g.~\cite{Ta-06,KotaVi-14} and references therein. We refer also to \cite{KrSi-18, KiKr-21, Li-23} for small data global existence for \eqref{eq:HWM} in the non-integrable case posed on $\R^N$ with $N \geq 3$, where dispersive estimates can be used that are not available for our setting here. 


\subsection{Matrix-valued generalization of (HWM)} 
Before we state our main results, it will be useful to introduce the following generalization of the half-wave maps equation beyond the target $\Ss^2$. By following \cite{GeLe-18}, we notice that \eqref{eq:HWM_S2} can be rephrased in matrix-commutator form given by
\be \label{eq:HWM} \tag{HWM}
\boxed{\pt_t \Ub = -\frac{i}{2} [\Ub, |D| \Ub]}
\ee
with the matrix-valued map 
$$
\Ub = \uu \cdot \bm{\sigma} = \sum_{k=1}^3 u_k \sigma_k = \left ( \begin{array}{cc} u_3 & u_1 - i u_2 \\ u_1 + i u_2 & -u_3 \end{array} \right ),
$$
where $\bm{\sigma} = (\sigma_1, \sigma_2, \sigma_3)$ denote the standard Pauli matrices. It is straightforward (see \cite{GeLe-18}) to verify that the geometric constraint that $\uu(t,\cdot)$ takes values in $\Ss^2$ is equivalent to the algebraic conditions 
$$
\Ub(t,\cdot) = \Ub(t, \cdot)^*, \quad \Ub(t,\cdot)^2 = \mathds{1}_2, \quad \Tr (\Ub(t,\cdot)) = 0.
$$ 
As recently proposed in \cite{GeLe-25}, the matrix-valued formulation of (HWM) leads to a natural generalization by considering maps $\Ub$ valued in the {\em complex Grassmannians}, which we denote by $\Gr_k(\C^d)$ and identify with the set of matrices
$$
\Gr_k(\C^d) := \left \{ U \in \C^{d \times d} : U^* = U, \; U^2 = \mathds{1}_d, \; \Tr (U) = d-2k \right \},
$$
where $d \geq 2$ and $0 \leq k \leq d$ are given integers.\footnote{Note the trivial cases $\Gr_0(\C^d) = \{ \mathds{1}_d \}$ and $\Gr_d(\C^d) = \{ -\mathds{1}_d \}$. To streamline the presentation, we will mostly consider  the non-trivial cases $\Gr_k(\C^d)$ with $1 \leq k \leq d-1$.} For any $U \in \Gr_k(\C^d)$, we note that $P= \frac{1}{2}(\mathds{1}_d- U)$ is an orthogonal projection $P=P^*=P^2$ with $\rank(P) = \Tr(P)=k$. Thus, we can canonically identify elements in $\Gr_k(\C^d)$ with the $k$-dimensional subspaces in $\C^d$ in accordance with the geometric definition of the Grassmannian $\Gr_k(\C^d)$. Furthermore, we recall that $\Gr_k(\C^d)$ is a compact K\"ahler manifold of complex dimension $k(d-k)$ and the Grassmannians $\Gr_1(\C^d)$ clearly correspond to the projective spaces $\mathbb{CP}^{d-1}$ which are particularly interesting from the physical point of view.

For the rest of this paper, we will thus study (HWM) with the targets $\Gr_k(\C^d)$, where we remind the reader that all results shown below also apply to \eqref{eq:HWM_S2} by using that $\Ub = \uu \cdot \bm{\sigma} \in \Gr_1(\C^2)\cong \mathbb{CP}^1$ with $\uu = (u_1,u_2,u_3) \in \Ss^2$ if and only if $u_k = \frac{1}{2} \Tr(\Ub \sigma_k)$ for $k=1,2,3$.

\subsection{Lax pair structure}
For \eqref{eq:HWM} with target $\Gr_k(\C^d)$, the energy functional is readily found to be
\be \label{def:energy_HWM}
E[\Ub] = \frac{1}{2} \int_\T \Tr(\Ub (|D|\Ub)) \, d\theta = \frac{1}{2} \sum_{n =-\infty}^\infty |n| |\widehat{\Ub}_n|^2 = \frac{1}{2} \| \Ub \|_{\dot{H}^{1/2}}^2
\ee
with  the corresponding energy space
$$
H^{1/2}(\T; \Gr_k(\C^d)) = \{ \Ub \in H^{1/2}(\T; \C^{d \times d}) : \mbox{$\Ub(\theta) \in \Gr_k(\C^d)$ for a.e.~$\theta \in \T$} \}.
$$
As before, we notice that the energy $E[\Ub]$ is conformally invariant and hence the energy space $H^{1/2}(\T; \Gr_k(\C^d))$ is critical for (HWM) with respect to this invariance of the problem. From the analysis of half-harmonic maps, we infer that \eqref{eq:HWM} has non-trivial {\em stationary solutions} and more generally {\em traveling solitary waves}, see the appendix for details.

Let us now briefly discuss the Lax pair structure behind the half-wave maps equation. To this end, we assume that $\Ub \in C(I; H^s)$ for some $s > \frac 3 2$ solves \eqref{eq:HWM} on some time interval $I$ with $0 \in I$. For any $t \in I$, we consider the self-adjoint Toeplitz operator
\be \label{def:Toep}
T_\Ub(t) : L^2_+ \to L^2_+, \quad \Fb \mapsto T_{\Ub(t)} \Fb = \Pi (\Ub(t) \Fb) 
\ee
defined in the matrix-valued Hardy space $L^2_+=L^2_+(\T; \C^{d \times d})$, where $\Pi$ denotes the Cauchy--Szeg\H{o} projection onto $L^2_+$. By following \cite{GeLe-18, GeLe-25}, we exploit the algebraic identities for the matrix-valued map $\Ub$ together with product identities for Toeplitz operators to conclude that the following {\em Lax equation} holds:
\be \label{eq:Lax_intro}
\frac{d}{dt} T_{\Ub(t)} = [B_{\Ub(t)}, T_{\Ub(t)}] \quad \mbox{for $t \in I$}.
\ee
Here the unbounded operator 
$$
B_{\Ub(t)} = -\frac{i}{2} ( T_{\Ub(t)} D + D  T_{\Ub(t)}) + \frac{i}{2} T_{|D| \Ub(t)}
$$
is essentially skew-adjoint with $\dom(B_{\Ub(t)}) = \{ \Fb \in L^2_+ : D \Fb \in L^2_+ \}$ and $D = -i \pt_\theta$. As a technical aside, we remark that $B_{\Ub(t)}$ is neither bounded from below nor above, i.e., its spectral properties resemble that of a Dirac-type operator and, moreover, its (essential) skew-adjointness cannot be inferred from the Kato--Rellich theorem due to $\| T_\Ub \| = 1$. We refer to Appendix \ref{app:lwp} for details.

By \eqref{eq:Lax_intro}, we infer the Lax evolution of $T_{\Ub(t)}$ by the unitary equivalence
\be \label{eq:T_unitary}
T_{\Ub(t)} = \Us(t) T_{\Ub_0} \Us(t)^*
\ee
with $\Us(t)$ being the unitary map that solves the operator differential equation $\frac{d}{dt} \Us(t) = B_{\Ub(t)} \Us(t)$ for $t \in I$ and $\Us(0) = \id$. 

Evidently, the spectrum of $T_{\Ub(t)}$ is preserved for $t \in I$. To extract a useful set of conserved quantities, we observe that the algebraic property $\Ub(t,\cdot)^2= \mathds{1}_d$ implies 
$$
T_{\Ub(t)}^2 = \id - K_{\Ub(t)} \quad \mbox{with} \quad K_{\Ub(t)}= H^*_{\Ub(t)} H_{\Ub(t)},
$$
with the Hankel operator $H_{\Ub(t)}\Fb = \Pi_-(\Ub(t) \Fb)$ for $\Fb \in L^2_+$ with the projection $\Pi_- = \id -\Pi$ onto $L^2_- = L^2 \ominus L^2_+$. Now, a direct calculation in Section \ref{sec:spec} below yields
$$
E[\Ub(t)] = \frac{1}{2} \| \Ub(t) \|_{\dot{H}^{1/2}}^2 = \Tr(K_{\Ub(t)}).
$$
Thus the operator $K_{\Ub(t)}$ is trace-class if and only if $\Ub(t) \in H^{1/2}$, and from \eqref{eq:T_unitary} we obtain the unitary equivalence
\be \label{eq:K_unitary}
K_{\Ub(t)} = \Us(t) K_{\Ub_0} \Us(t)^*,
\ee
which reproves conservation of energy $E[\Ub(t)] = E[\Ub_0]$ for $t \in I$, provided that $\Ub(t)$ is a sufficiently smooth solution. In fact, from \eqref{eq:K_unitary} we can infer that sufficiently smooth solutions of \eqref{eq:HWM} exhibit the following family of conserved quantities given by
$$
I_p[\Ub] = \Tr(|K_\Ub|^{p/2}) = \Tr( |H_\Ub|^p)  \quad \mbox{for all $p \in [1, \infty)$}
$$
corresponding to the  Schatten-$p$-norms $\| H_\Ub \|_{\mathfrak{S}_p} = \Tr(|H_\Ub|^p)^{1/p}$. By invoking Peller's theorem (see e.g.~\cite{Pe-03}), we obtain the a-priori bounds
\be \label{ineq:Besov}
\sup_{t \in I} \| \Ub(t) \|_{B^{1/p}_{p,p}} \lesssim \| \Ub(0) \|_{B^{1/p}_{p,p}} \quad \mbox{for all $p \in [1,\infty)$}
\ee 
in the class of Besov spaces $B^{1/p}_{p,p}(\T; \C^{d \times d})$, where we recall that $B^{1/2}_{2,2} = H^{1/2}$. However, these bounds are still off by more than one derivative needed to deduce global well-posedness for solutions in $H^s$ with $s > \frac 3 2$. Thus, at the moment, it is far from clear how to obtain a  local well-posedness theory for (HWM) such that the a-priori bounds \eqref{ineq:Besov} can be used to obtain global regularity.\footnote{A slighlty refined analysis shows that an a-priori bound on  $\|\pt_\theta \Ub(t) \|_{L^\infty}$ would be sufficient to conclude global existence. However, by Peller's result and the Lax structure, we can only bound $\| \pt_\theta \Ub(t) \|_{L^1} \lesssim \| \Ub(0) \|_{B^1_{1,1}}$.}

Finally, we mention the remarkable feature that the half-wave maps equation preserves {\em rationality of solutions}. That is, if the initial datum $\Ub_0 : \T \to \Gr_k(\C^d)$ is a rational function of $z = \eu^{i \theta}$ with $\theta \in \T$, then the corresponding smooth solution $\Ub \in C(I; H^\infty)$ is a rational function $\Ub(t) : \T \to \Gr_k(\C^d)$ for any $t \in I$ on its maximal time interval $I$. Indeed, this feature follows from the unitary equivalence \eqref{eq:K_unitary} by the Lax evolution together with the fact that
$$
\mbox{$K_{\Ub(t)}$ has finite rank if and only if $\Ub(t) : \T \to \Gr_k(\C^d)$ is rational},
$$
which is a direct consequence of {\em Kronecker's theorem} for the Hankel operator $H_{\Ub(t)}$. In fact, the preservation of rationality will be our starting point for proving global-in-time existence of rational solutions of \eqref{eq:HWM}, which later will be used to construct the aforementioned flow map with data in $H^{1/2}$. 

\subsection*{Acknowledgments}
P.~G\'erard was partially supported by the French Agence Nationale de la Recherche under the ANR project ISAAC–ANR-23–CE40-0015-01. E.~Lenzmann gratefully acknowledges financial support from Swiss National Science Foundation (SNSF) through Grant No.~204121.

\section{Main results}

\label{sec:main}

In this section, we will state our main results that show global well-posedness of \eqref{eq:HWM} in the critical energy space $H^{1/2}(\T; \Gr_k(\C^d))$ and their almost periodicity in time.

\subsection{Global well-posedness in $H^{1/2}$}
As a starting point, we observe that an adaptation of the strategy in \cite{GeLe-25} yields the following global well-posedness result for all rational initial data, which we denote by $\mathcal{R}at(\T; \Gr_k(\C^d))$ henceforth.

\begin{lem}[GWP for rational data]
Let $d \geq 2$ and $1 \leq k \leq d-1$ be integers. Then, for every $\Ub_0 \in \mathcal{R}at(\T; \Gr_k(\C^d))$, there exists a unique solution $\Ub \in C^\infty(\R \times \T)$ of \eqref{eq:HWM} with initial datum $\Ub(0) =\Ub_0$. 

Moreover, we have conservation of the mean and the energy, i.e.,
$$
\Mean[\Ub(t)] = \Mean[\Ub_0] = \frac{1}{2 \pi} \int_\T \Ub_0, \quad E[\Ub(t)] = E[\Ub_0] \quad \mbox{for all $t \in \R$},
$$
and it holds that $\Ub(t) \in \mathcal{R}at(\T; \Gr_k(\C^d))$ for all $t \in \R$.
\end{lem}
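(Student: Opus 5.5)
We follow the strategy of \cite{GeLe-25} and transfer it from $\R$ to $\T$. The plan has four steps: local smooth theory, propagation of rationality, an a-priori bound that prevents finite-time blow-up, and continuation.

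\textbf{Step 1: local theory and conservation laws.} We invoke the local well-posedness theory for $H^s$-data with $s>\frac32$ (Appendix \ref{app:lwp}). Since a rational $\Ub_0$ is smooth, this gives a unique maximal solution $\Ub\in C(I;H^\infty)$, which is $C^\infty$ on $I\times\T$. Uniqueness holds in this class by the standard energy estimate for differences.

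On $I$ the Lax equation \eqref{eq:Lax_intro} and the unitary equivalence \eqref{eq:K_unitary} hold. These give conservation of $E$ and of $\rank K_{\Ub(t)}=\rank K_{\Ub_0}=:N<\infty$. Conservation of the mean follows by integrating \eqref{eq:HWM} over $\T$: for self-adjoint $\Ub$ we have $\int_\T \Ub\,|D|\Ub=\int_\T (|D|\Ub)\,\Ub$, which one checks via Parseval in Fourier variables using $\widehat{\Ub}_{-n}=\widehat{\Ub}_n^*$. Kronecker's theorem then shows that $\Ub(t)$ stays rational of degree bounded in terms of $N$, so $\Ub(t)\in\mathcal{R}at(\T;\Gr_k(\C^d))$ for all $t\in I$.

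\textbf{Step 2: reduction to a finite-dimensional manifold.} The set of rational maps in $\Gr_k(\C^d)$ with $\rank K_\Ub=N$ is a finite-dimensional manifold. We parametrize it as
\[
\Ub=\mathds{1}-2P, \qquad P(z)=\text{orthogonal projection onto the range of } A(z),
\]
or, equivalently, describe the holomorphic part $\Pi\Ub$ by its poles $p_j\in\C\setminus\ov{\D}$ with residue data. Restricted to this manifold, (HWM) becomes a finite system of ODEs, essentially a spin Calogero--Moser/Sutherland system. Global existence is then equivalent to showing that the trajectory stays in a compact subset of the manifold, i.e.\ that no pole reaches the unit circle $\T$ and no pole degenerates (collides or escapes) in finite time.

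\textbf{Step 3: the main obstacle, excluding approach of poles to $\T$.} We expect this to be the hardest step. Energy conservation alone gives only $H^{1/2}$ control, and that does not prevent concentration. The approach of \cite{GeLe-25} is to use the \emph{explicit formula}. Let $S$ denote the shift on $L^2_+$. Then $\Pi\Ub(t)(z)$ can be written as
\[
\Pi\Ub(t)(z)=\big\langle (\id-z\,\eu^{-itA_0}\cdots S^*)^{-1}\cdots\big\rangle ,
\]
a resolvent-type expression built from the conjugated operator $\Us(t)^*S^*\Us(t)$. Restricted to the finite-dimensional invariant subspace $\ran K_{\Ub_0}$ (suitably enlarged to be $S^*$-invariant), this reduces to a matrix $M(t)$ whose evolution is explicit and linear up to the unitary factor. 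The poles of $\Pi\Ub(t)$ are the inverses of the eigenvalues of $M(t)$, and $M(t)$ is a contraction.

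The key claim is that the spectral radius of $M(t)$ stays strictly below $1$ locally uniformly in $t$. If $M(t)$ had an eigenvalue on $\T$, there would be a common eigenvector of $S^*$ and $K$ of a type incompatible with $K=H^*H$ having finite rank on $L^2_+$ (a Beurling--Lax type argument). Hence poles remain in a compact set away from $\T$. By bounded degree, this gives uniform $H^s$ bounds on compact time intervals.

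\textbf{Step 4: conclusion.} The $H^s$ bounds from Step 3 let the blow-up alternative of the local theory yield $I=\R$. Smoothness in $(t,\theta)$ follows from the ODE description, and the conservation laws and rationality persist for all $t\in\R$ by Step 1.
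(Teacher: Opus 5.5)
Your architecture is the paper's: local theory, Lax-pair conservation of $E$ and of $\rank K_{\Ub(t)}$, Kronecker for rationality, then the explicit formula restricted to a finite-dimensional subspace, exclusion of unimodular eigenvalues, $H^s$ bounds and the blow-up alternative (Corollary~\ref{cor:GWP_rational}, via Theorem~\ref{thm:explicit_smooth}, Proposition~\ref{prop:Ks} and Lemmas~\ref{lem:sigma_p}, \ref{lem:quasi_periodic}). Your Step 2 (manifold structure, spin Calogero--Moser ODE, pole collisions) is not needed, is not proved, and is never used later. Smoothness in $t$ should therefore come from the $H^s$ bounds for all $s$ together with the equation, not from an ODE description. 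The real problem is Step 3, which you rightly call the crux: it is asserted rather than proved, and the reason you sketch points the wrong way.

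\textbf{The explicit formula is not derived.} You leave it as ``$\eu^{-itA_0}\cdots$''. The missing ingredient is the conjugation identity $\Us(t)^*S^*\Us(t)=\eu^{-itT_{\Ub_0}}S^*$.
\begin{itemize}
\item On $\T$ it comes from the commutator $[S^*,B_{\Ub}]=-iT_{\Ub}S^*$, computed from $[S^*,D]=S^*$ and $[S^*,T_{\Ub}]\Fb=(S^*\Pi\Ub)\Mean(\Fb)$, together with $T_{\Ub(t)}=\Us(t)T_{\Ub_0}\Us(t)^*$.
\item Add $\Mean(\Us(t)\Fb)=\Mean(\Fb)$ (since $B_{\Ub}$ kills constants), $\Pi\Ub(t)=\Us(t)\Pi\Ub_0$, and the reproducing formula $\Fb(z)=\Mean((\id-zS^*)^{-1}\Fb)$. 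Together these give $\Pi\Ub(t,z)=\Mean((\id-z\eu^{-itT_{\Ub_0}}S^*)^{-1}\Pi\Ub_0)$.
\item Only then is $M(t)=\eu^{-itT_{\Ub_0}}S^*|_{\Ks}$ explicit. This needs $\Ks$ to be invariant under $T_{\Ub_0}$ as well as $S^*$, and to contain $\Pi\Ub_0$.
\item Note that $\ov{\ran K_{\Ub_0}}$ is already $S^*$-invariant. The enlargement $\Ks=\Hs+\C\Pi\Ub_0+\C\mathds{1}_d$ is needed to capture $\Pi\Ub_0$ while keeping both invariances.
\end{itemize}

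\textbf{The unimodular-eigenvalue exclusion uses the wrong mechanism.} It does not come from finite rank of $K$ or from a Beurling--Lax argument. The actual argument is this.
\begin{itemize}
\item Suppose $\eu^{-itT_{\Ub_0}}S^*\Fb=\lambda\Fb$ with $|\lambda|=1$. Then $\|S^*\Fb\|=\|\Fb\|$, and by $SS^*=\id-\Mean$ this forces $\Mean(\Fb)=0$.
\item The commutator formula then gives $S^*T_{\Ub_0}\Fb=T_{\Ub_0}S^*\Fb$. So the eigenspace is $T_{\Ub_0}$-invariant, hence $\eu^{itT_{\Ub_0}}$-invariant.
\item Hence the eigenspace contains $S^*\Fb=\lambda\eu^{itT_{\Ub_0}}\Fb$. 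Iterating gives $\widehat{\Fb}_n=\Mean((S^*)^n\Fb)=0$ for all $n$, so $\Fb=0$.
\end{itemize}
This works for every $\Ub_0\in H^{1/2}$. Finite dimensionality of $\Ks$ enters only afterwards, to turn ``no eigenvalue on $\pt\D$'' into ``spectral radius $<1$''. If you prefer your common-eigenvector route, the contradiction is simply that $S^*$ has no eigenvalue on $\pt\D$ in $L^2_+$; rank plays no role. Reaching such a common eigenvector still requires the mean-zero step above.

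Once these two ingredients are supplied, the rest of your argument is correct. Continuity of $t\mapsto M(t)$ in finite dimensions gives resolvent bounds on $\ov{\D}$, hence $H^s$ bounds on compact time intervals, hence $I=\R$. The paper goes further and gets bounds uniform on all of $\R$ by compactness of $\T^N$, which is more than global existence needs.
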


Next, we recall that $\mathcal{R}at(\T; \Gr_k(\C^d))$ is dense in $H^{1/2}(\T; \Gr_k(\C^d))$, see Theorem \ref{thm:dense} below from \cite{GeLe-25}. Thus by taking limits of rational solutions, we easily construct  global weak solutions
$$
\Ub \in C(\R; H^{1/2}_{\mathrm{w}}) \quad \mbox{with} \quad E[\Ub(t)] \leq E[\Ub_0] \quad \mbox{for all $t \in \R$}
$$ 
of \eqref{eq:HWM} for initial data $\Ub_0 \in H^{1/2}(\T; \Gr_k(\C^d))$, where $H^{1/2}_{\mathrm{w}}$ denotes $H^{1/2}$ equipped with its weak topology. 

Our first main result provides a substantial strengthening by showing uniqueness of this weak limit and energy conservation, i.e., we have the equality 
$$
E[\Ub(t)] = E[\Ub_0] \quad \mbox{for all $t \in \R$},
$$ 
which yields the strong continuity of $\Ub \in C(\R; H^{1/2})$. We  formulate this as follows.

\begin{thm}[GWP in $H^{1/2}$] \label{thm:gwp}
Let $d \geq 2$ and $1 \leq k \leq d-1$ be integers. Then  \eqref{eq:HWM} with target $\Gr_k(\C^d)$ is \textbf{globally well-posed} in $H^{1/2}$ in the following sense. There exists a unique continuous map 
 $$
\Phi : \R \times H^{1/2}(\T; \Gr_k(\C^d)) \to H^{1/2}(\T; \Gr_k(\C^d)), \quad (t,\Ub_0) \mapsto \Phi_t(\Ub_0)
 $$ 
 such that, for every  $\Ub_0 \in H^{1/2}(\T; \Gr_k(\C^d))$, the following properties hold.
\begin{enumerate}
\item[(i)] \underline{Solution}: The map $t \mapsto \Phi_t(\Ub_0)$ belongs to $C(\R; H^{1/2}(\T; \Gr_k(\C^d)))$ and solves \eqref{eq:HWM} in the weak sense with initial datum $\Phi_{t=0}(\Ub_0)= \Ub_0$.
\item[(ii)] \underline{Mean and energy conservation}: For all $t \in \R$, it holds that
$$
\Mean[\Phi_t(\Ub_0)] = \Mean[\Ub_0] , \quad E[\Phi_t(\Ub_0)] = E[\Ub_0].
$$
\item[(iii)] \underline{Continuous dependence}: If $\Ub_{0,n} \in H^{1/2}(\T; \Gr_k(\C^d))$ is a sequence with $\Ub_{0,n} \to \Ub_0$ in $H^{1/2}$ as $n \to \infty$, then $\Phi_t(\Ub_{0,n}) \to \Phi_t(\Ub_0)$ in $H^{1/2}$ as $n \to \infty$ locally uniformly in time. 
\item[(iv)] \underline{Group property}: It holds that
$$
\Phi_{t+s}(\Ub_0) = \Phi_t(\Phi_s(\Ub_0)) \quad \mbox{for all $t,s \in \R$}.
$$
\item[(v)] \underline{Preservation of rationality}: For rational initial data $\Ub_0 \in \mathcal{R}at(\T; \Gr_k(\C^d))$, we have $\Phi_t(\Ub_0) \in \mathcal{R}at(\T; \Gr_k(\C^d))$ for all $t \in \R$.
\end{enumerate}
\end{thm}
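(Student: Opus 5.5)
The plan is to build $\Phi$ on the dense set $\mathcal{R}at(\T;\Gr_k(\C^d))$ using the Lemma (GWP for rational data), and then extend it by continuity. The tool for the extension is an explicit formula derived from the Lax pair. Let $S$ be the shift on $L^2_+$ and $S^*$ its adjoint. The mean of $\Ub$ and its positive Fourier modes can be recovered from $\Pi\Ub$, evaluated through the holomorphic extension to $\D$. Following \cite{GeLe-25}, for rational solutions I would establish a formula of the type
\[
\Pi \Ub(t)(z) = \bigl\langle (\id - z\, \eu^{-itA} \ldots)^{-1} \ldots \bigr\rangle,
\]
whose schematic content is the following: $\Pi\Ub(t,z)$ is expressed through $(\id - z\,\Sigma(t))^{-1}$ applied to $\Pi\Ub_0$ and paired against a fixed vector, where $\Sigma(t)$ is built from $S^*$, the operator $\eu^{-it T_{\Ub_0}D\ldots}$ and $T_{\Ub_0}$. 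I would derive it by computing $\frac{d}{dt}\bigl(\Us(t)^* S^* \Us(t)\bigr)$ with the Lax equation \eqref{eq:Lax_intro}. The key point is that the right-hand side depends only on $\Ub_0$ through bounded operators ($T_{\Ub_0}$, $K_{\Ub_0}$), and on unitary groups generated by self-adjoint operators attached to $\Ub_0$. It is therefore continuous in $\Ub_0$ for the $H^{1/2}$ topology, pointwise for $|z|<1$ and locally uniformly in $t$.

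Next I would construct the weak flow. Take rational $\Ub_{0,n}\to\Ub_0$ in $H^{1/2}$. The solutions $\Ub_n(t)$ are bounded in $H^{1/2}\cap L^\infty$ by energy conservation and the constraint $\Ub_n^2=\mathds{1}_d$. The explicit formula shows that, for each $t$, $\Pi\Ub_n(t)(z)$ converges for every $z\in\D$ to a limit depending only on $\Ub_0$. Since $\Ub$ is self-adjoint, the negative modes are adjoints of the positive ones, so all Fourier coefficients converge. Hence $\Ub_n(t)\weakto \Ub(t)$ in $H^{1/2}$, strongly in $L^2$, and the limit is unique and independent of the approximating sequence. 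The constraint $\Ub(t)\in\Gr_k(\C^d)$ passes to the limit by strong $L^2$ convergence along an a.e. subsequence (the trace is continuous). The weak form of \eqref{eq:HWM} also passes to the limit: write $[\Ub,|D|\Ub]$ in divergence-like form, or use $L^2$-strong times $H^{-1/2}$-weak bounds through commutator estimates. This yields the bound $E[\Ub(t)]\le E[\Ub_0]$, and the mean is conserved because it is read off at $z=0$.

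The main obstacle is energy conservation, i.e. upgrading weak to strong convergence; everything else follows from it. For this I would use the paper's stability principle. Since $E[\Ub]=\Tr K_\Ub$, it suffices to show $K_{\Ub_n(t)}\to K_{\Ub(t)}$ in trace norm. For rational data, $K_{\Ub_n(t)}=\Us_n(t)K_{\Ub_{0,n}}\Us_n(t)^*$, and $K_{\Ub_{0,n}}\to K_{\Ub_0}$ in trace class. The idea is to show that the unitaries $\Us_n(t)$, or at least their action on the range of $K_{\Ub_{0,n}}$, converge strongly to some unitary $\Us(t)$. Strong convergence of unitaries combined with trace-norm convergence of $K_{\Ub_{0,n}}$ gives trace-norm convergence of the conjugates. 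This yields $K_{\Ub(t)}=\Us(t)K_{\Ub_0}\Us(t)^*$ and hence $E[\Ub(t)]=\Tr K_{\Ub_0}=E[\Ub_0]$. The strong convergence of $\Us_n(t)$ itself I would try to extract from the explicit formula: the vectors $\Us_n(t)^*\mathbf{G}$, for generating vectors $\mathbf{G}$ (constants and their images under $S$), are given by formulas continuous in $\Ub_0$, and one then uses isometry together with density. This identification is the delicate step.

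With energy conservation in hand, the remaining properties follow. Weak convergence plus convergence of norms gives $\Ub_n(t)\to\Ub(t)$ strongly in $H^{1/2}$. The same argument with $t$ varying gives $\Ub\in C(\R;H^{1/2})$, and continuous dependence (iii) holds locally uniformly in $t$ by equicontinuity from the explicit formula and a diagonal argument. The group property (iv) holds for rational data and extends by continuity. Property (v) is the Lemma. Uniqueness of $\Phi$ follows because any continuous map agreeing with the rational flow on a dense set must coincide with this extension.
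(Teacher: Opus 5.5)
\textbf{Gap: the unitarity of the limiting $\Us(t)$ is not proved.} Your architecture matches the paper's: the rational flow, the explicit formula, the weak limit, and energy conservation via $E[\Ub]=\Tr(K_\Ub)$ and a unitary conjugation. But the step you call delicate is the whole difficulty, and your mechanism for it only yields something automatically true.

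The correct formula is $(\Us_n(t)\Fb)(z)=\Mean\bigl((\id-z\eu^{-itT_{\Ub_{0,n}}}S^*)^{-1}\Fb\bigr)$. Combine the intertwining $S^*\Us_n(t)=\Us_n(t)\eu^{-itT_{\Ub_{0,n}}}S^*$ with $\Us_n(t)^*\Eb=\Eb$ for constant $\Eb$. This gives $\Us_n(t)^*(z^j\Eb)=(S\eu^{itT_{\Ub_{0,n}}})^j\Eb\to(S\eu^{itT_{\Ub_0}})^j\Eb$. Isometry plus density then gives $\Us_n(t)^*\to\Us(t)^*$ strongly, with $\Us(t)^*$ an isometry.

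However, $\Us(t)^*$ is an isometry for every map of this form. It maps $L^2_+$ unitarily onto $\Xs=\overline{\mathrm{span}}\{(S\eu^{itT_{\Ub_0}})^j\Eb\}$, as in the proof of Lemma \ref{lem:UsSigma_unitary}. Strong convergence of the adjoints says nothing about $\Us_n(t)$ on $\ker\Us(t)=\Xs^\perp$: there $\Us_n(t)\Fb\weakto 0$ while $\|\Us_n(t)\Fb\|=\|\Fb\|$.

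The following are one and the same statement: strong convergence of $\Us_n(t)$, unitarity of $\Us(t)$, density of $\mathrm{span}\{(S\eu^{itT_{\Ub_0}})^j\Eb\}$, and $\ker\Us(t)=\{0\}$. That statement is what must be proved. Your trace-norm convergence of $\Us_n(t)K_{\Ub_{0,n}}\Us_n(t)^*$ needs strong convergence of $\Us_n(t)$ itself, not of its adjoint. Citing the stability principle only reformulates this as nondegeneracy, or as strong stability of $(\eu^{-itT_{\Ub_0}}S^*)^j$, and you verify neither.

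\textbf{Why no soft argument suffices, and what is missing.} Continuity of the explicit formula in the data plus unitarity of the approximants cannot close this. The zero-dispersion limit of (BO) has an explicit formula of exactly this form, obtained as a limit of unitary propagators of the dispersive problem. Yet it is not unitary just after the breaking time (Subsection \ref{subsec:ZD_BO}). What is missing is the structural input on $T_{\Ub_0}$ that the paper uses to prove $\ker\Us(t)=\{0\}$:
\begin{enumerate}
\item[(a)] Pass to the limit in $T_{\Ub_n(t)}\Us_n(t)=\Us_n(t)T_{\Ub_{0,n}}$, using $T_{\Ub_n(t)}\to T_{\Ub(t)}$ strongly, $\Us_n(t)\weakto\Us(t)$, and self-adjointness. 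This gives $T_{\Ub(t)}\Us(t)=\Us(t)T_{\Ub_0}$, so $\Us(t)$ maps $\ker(T_{\Ub_0}-\mu\id)$ into $\ker(T_{\Ub(t)}-\mu\id)$.
\item[(b)] Show $\Us(t)\Fb\neq0$ for every eigenvector $\Fb$, using $\Mean(\Us(t)\Gb)=\Mean(\Gb)$ and the near $S^*$-invariance of eigenspaces. If $n$ is the first index with $\widehat{\Fb}_n\neq 0$, then $(S^*)^n\Us(t)\Fb=\eu^{-in\mu t}\Us(t)(S^*)^n\Fb$ has nonzero mean.
\item[(c)] Conclude from the pure point spectrum of $T_{\Ub_0}$ (a consequence of $T_{\Ub_0}^2=\id-K_{\Ub_0}$ with $K_{\Ub_0}$ compact). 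Combine this with the orthogonality of eigenspaces of the self-adjoint $T_{\Ub(t)}$.
\end{enumerate}
Once $\Us(t)$ is unitary, your trace-norm argument gives energy conservation; so does the paper's conjugation of the limiting intertwining relation. The remainder of your plan then goes through.
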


Based on the previous theorem, we will refer to the data-to-solution map $\Phi$ above as the {\em flow map} for \eqref{eq:HWM} with initial data in $H^{1/2}$. 

The next result shows that the corresponding global-in-time solutions $\Ub(t) = \Phi_t(\Ub_0)$ from above still satisfy a Lax evolution, initially found for smooth solutions on short time intervals. Note that, due to the low regularity of solutions, the Lax equation \eqref{eq:Lax_intro} in commutator form becomes meaningless as it is unclear how to define $B_{\Ub(t)}$ if we only assume $\Ub(t) \in H^{1/2}$. However, we can still deduce that the Toeplitz operator $T_{\Ub(t)}$ evolves by a unitary equivalence in the sense of a Lax evolution. 

\begin{thm}[Lax evolution] \label{thm:lax}
Denote $\Ub(t) = \Phi_t(\Ub_0)$ for $\Ub_0 \in H^{1/2}(\T; \Gr_k(\C^d))$ with $\Phi$ as in Theorem \ref{thm:gwp} above. Then, for all $t \in \R$, there exists a unitary map $\Us(t) : L^2_+(\T; \C^{d \times d}) \to L^2_+(\T; \C^{d \times d})$ such that
$$
T_{\Ub(t)} = \Us(t) T_{\Ub_0} \Us(t)^*
$$
with the Toeplitz operator $T_{\Ub(t)} : L^2_+(\T; \C^{d \times d}) \to L^2_+(\T; \C^{d \times d})$ defined in \eqref{def:Toep}.
\end{thm}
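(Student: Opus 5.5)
We prove the statement first for rational data and then pass to general $H^{1/2}$ data by density, using weak compactness of unitary operators together with the strong continuity of the flow map from Theorem~\ref{thm:gwp}.

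\emph{Step 1: rational data.} For $\Ub_0 \in \mathcal{R}at(\T;\Gr_k(\C^d))$, the solution $\Ub(t)=\Phi_t(\Ub_0)$ is smooth and global by the rational GWP lemma. Hence $B_{\Ub(t)}$ is defined on $\dom(B_{\Ub(t)})=\{\Fb : D\Fb\in L^2_+\}$ and is essentially skew-adjoint (Appendix~\ref{app:lwp}). We solve $\frac{d}{dt}\Us(t)=B_{\Ub(t)}\Us(t)$, $\Us(0)=\id$, as a unitary propagator, for instance via Kato's theory for time-dependent generators with a common domain. Differentiating $\Us(t)^*T_{\Ub(t)}\Us(t)$ on the common domain and using the Lax equation \eqref{eq:Lax_intro} shows that this quantity is constant in time. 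This gives $T_{\Ub(t)}=\Us(t)T_{\Ub_0}\Us(t)^*$ for all $t\in\R$.

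\emph{Step 2: approximation.} Fix a general $\Ub_0\in H^{1/2}(\T;\Gr_k(\C^d))$ and $t\in\R$. By Theorem~\ref{thm:dense}, choose rational $\Ub_{0,n}\to\Ub_0$ in $H^{1/2}$. By Theorem~\ref{thm:gwp}(iii), $\Ub_n(t):=\Phi_t(\Ub_{0,n})\to\Ub(t)$ in $H^{1/2}$, and in particular in $L^2$. Since $\|\Ub\|_{L^\infty}\le C$ on Grassmannian-valued maps, $T_{\Ub_n(t)}\to T_{\Ub(t)}$ strongly, and likewise $T_{\Ub_{0,n}}\to T_{\Ub_0}$ strongly: for $\Fb\in L^2_+$, dominated convergence along subsequences gives $\Ub_n\Fb\to\Ub\Fb$ in $L^2$. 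The corresponding $K$'s converge as well, $K_{\Ub_n(t)}\to K_{\Ub(t)}$ in trace norm, because $H_{\Ub}$ depends continuously on $\Ub$ in Hilbert--Schmidt norm via $\|H_\Ub\|_{\mathfrak S_2}^2\sim\|\Ub\|_{\dot H^{1/2}}^2$.

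\emph{Step 3: passage to the limit, the main obstacle.} The unitaries $\Us_n(t)$ from Step~1 only have weakly convergent subsequences, and a weak limit $\Us$ need not be unitary. There are two possible routes.

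\emph{(a) Direct route.} Pass to a subsequence with $\Us_n(t)\to\Us$ weakly. From $T_{\Ub_n(t)}\Us_n = \Us_n T_{\Ub_{0,n}}$ we get $T_{\Ub(t)}\Us=\Us T_{\Ub_0}$ in the limit, since strongly convergent operators multiplied by weakly convergent ones converge weakly. The difficulty is upgrading this intertwining relation to a unitary equivalence.

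\emph{(b) Spectral route, which I would pursue.} Show directly that $T_{\Ub(t)}$ and $T_{\Ub_0}$ are unitarily equivalent by comparing spectral invariants: self-adjoint operators are unitarily equivalent if and only if their spectral measures agree up to equivalence, with multiplicities. The first ingredient is $T^2=\id-K$, with $K$ trace class and $\|T\|\le 1$. For the rational approximants, $K_{\Ub_n(t)}$ and $K_{\Ub_{0,n}}$ have the same nonzero eigenvalues with multiplicities. Trace-norm convergence transfers this to the limits, so $K_{\Ub(t)}$ and $K_{\Ub_0}$ are unitarily equivalent as compact operators; this uses Weyl/Lidskii continuity of eigenvalues and the fact that the kernels are infinite-dimensional in both cases. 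The essential spectrum of $T$ lies in $\{\pm1\}$. The remaining invariants are the eigenvalues of $T$ near $\pm1$ and in $(-1,1)$, together with the splitting of the eigenspaces of $T^2$ into its $\pm$ parts. Each eigenspace of $K$ for $\lambda>0$ splits into eigenspaces of $T$ with eigenvalues $\pm\sqrt{1-\lambda}$. These finite-dimensional spectral projections converge in norm under $T_n\to T$ strongly with $K_n\to K$ in trace norm, because they are Riesz projections of $K$ that commute with $T$. The dimensions of the $\pm$ parts are therefore preserved, since $T$ restricted to a convergent finite-dimensional projection varies continuously. Finally, on $\ker K$ we have $T^2=\id$, and the dimensions of $\ker(T\mp\id)$ must be matched. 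Here I expect the hardest point: if both are infinite-dimensional, they match automatically; if one is finite, I would use a Fredholm-index-type argument, showing that the dimension is a trace-type invariant computable as a limit along the approximation. Assembling the matched eigenbases then produces the unitary $\Us(t)$ with $T_{\Ub(t)}=\Us(t)T_{\Ub_0}\Us(t)^*$.
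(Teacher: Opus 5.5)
The proposal has two genuine gaps: Step 2 is circular within the paper's logic, and route (b) fails at the eigenvalues $\pm1$ even if Theorem~\ref{thm:gwp} is granted.

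\textbf{Circularity.} Step~2 takes Theorem~\ref{thm:gwp}(iii) for granted. In the paper, however, the strong $H^{1/2}$-continuity of $\Phi$ is a consequence of energy conservation, and energy conservation is deduced from the very Lax evolution you are trying to prove (cf.\ the first remark following Theorem~\ref{thm:lax}). Before this theorem one only has, by Lemma~\ref{lem:extension}, $\Ub_n(t)\to\Ub(t)$ in $L^2$ and weakly in $H^{1/2}$, with $E[\Ub(t)]\le E[\Ub_0]$. Under that convergence $K_{\Ub_n(t)}\to K_{\Ub(t)}$ only strongly. Nonzero eigenvalues of $K$ can then disappear in the limit, which is exactly the energy loss the theorem must exclude. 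So your Weyl/Lidskii matching of the nonzero spectrum of $K$ is not available.

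\textbf{The eigenvalues $\pm1$.} This is where you expected difficulty, and the proposed fix cannot work. Since $\|\Ub\Fb\|=\|\Fb\|$ pointwise, one has $\ker(T_\Ub\mp\id)=\{\Fb\in L^2_+:\Ub\Fb=\pm\Fb\}$. This space is $S$-invariant, so each of these kernels is either $\{0\}$ or infinite-dimensional.
\begin{itemize}
\item For the rational approximants both kernels are nonzero, hence infinite-dimensional. Indeed $\ker K$ has finite codimension, and neither kernel alone can, since $\Ub\neq\pm\mathds{1}_d$.
\item For non-rational data one of them can be trivial, for instance when the $+1$-eigenbundle of $\Ub_0$ has no nonzero holomorphic section in $L^2_+$. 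In particular, your claim that $\ker K$ is infinite-dimensional in both cases is false in general.
\end{itemize}
So the whole content at $\pm1$ is whether $\ker(T_{\Ub(t)}\mp\id)=\{0\}$ if and only if $\ker(T_{\Ub_0}\mp\id)=\{0\}$. This cannot be computed as a limit along the approximation. $T\mp\id$ is not Fredholm because $\se(T_\Ub)=\{\pm1\}$, so there is no index. Kernel dimensions are also not stable under this convergence, since the approximants always have infinite-dimensional kernels.

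\textbf{What is missing.} You need an actual transport of eigenvectors. This is the paper's key idea, and it is also what completes your route (a).
\begin{enumerate}
\item Take $\Us(t)$ to be the explicit-formula map for the limit datum. By Theorem~\ref{thm:explicit_smooth} and Lemma~\ref{lem:Us_Sigma_conv}, it is the weak limit of your $\Us_n(t)$.
\item This map preserves the mean and satisfies $S^*\Us(t)=\Us(t)\eu^{-itT_{\Ub_0}}S^*$ and $T_{\Ub(t)}\Us(t)=\Us(t)T_{\Ub_0}$.
\item Near $S^*$-invariance of the eigenspaces of $T_{\Ub_0}$ then shows that $\Us(t)$ is injective on each eigenspace, including $\mu=\pm1$.
\item By orthogonality of the eigenspaces of $T_{\Ub(t)}$, it follows that $\ker\,\Us(t)=\{0\}$.
\item The stability principle (Theorem~\ref{thm:stony}) upgrades injectivity to unitarity. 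It rests on $\|\Us(t)\Fb\|^2=\|\Fb\|^2-\lim_N\|(\eu^{-itT_{\Ub_0}}S^*)^N\Fb\|^2$.
\end{enumerate}
The last step is essential: for an arbitrary weak limit of unitaries, injectivity plus the intertwining relation would not give unitarity.
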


\begin{remarks*}
1) We will prove Theorem \ref{thm:lax} before completing the proof of Theorem \ref{thm:gwp} below, as it provides us with a convenient way for proving energy conservation via the identity
$E[\Ub(t)] = \Tr(K_{\Ub(t)})$ together with the unitary equivalence for $K_{\Ub(t)} = \Us(t) K_{\Ub_0} \Us(t)^*$ for the trace-class operator $K_{\Ub(t)} = \id- T_{\Ub(t)}^2$. 

2) The unitary map $\Us(t)$ is given by a so-called {\em explicit formula} which we will discuss in Section \ref{sec:road} below. Also, the explicit formula for $\Us(t)$ will play a key role in constructing the flow map $\Phi$ in Theorem \ref{thm:gwp} above.
\end{remarks*}

\subsection{Long-time behavior of solutions}
Our second main result shows that the solutions for (HWM) obtained by Theorem \ref{thm:gwp} are almost periodic in time; see  Section \ref{sec:ap} for a brief recap of almost periodic functions valued in Banach spaces.

\begin{thm}[Almost periodicity] \label{thm:ap}
Let $d \geq 2$ and $1 \leq k \leq d-1$ be integers. For every  $\Ub_0 \in H^{1/2}(\T; \Gr_k(\C^d))$, the map $t \mapsto \Phi_t(\Ub_0) \in C(\R; H^{1/2})$  is \textbf{almost periodic} in time. As a consequence, we have the following properties.
\begin{enumerate}
\item[(i)]  Poincar\'e recurrence holds in the sense that, for all $\eps, T > 0$, there exists some time $t_* \geq T$ such that $\| \Phi_{t_*}(\Ub_0) - \Ub_0 \|_{H^{1/2}} \leq \eps$.
\item[(ii)] The orbit $\{ \Phi_t(\Ub_0) : t \in \R \}$ is relatively compact in $H^{1/2}$.
\end{enumerate}
\end{thm}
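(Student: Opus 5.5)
The plan is to prove almost periodicity first for rational data and then pass to general $H^{1/2}$ data through a uniform approximation argument. This uses the continuity of the flow map from Theorem \ref{thm:gwp} together with the closedness of the space $\AP(\R; H^{1/2})$ of almost periodic functions under uniform limits on $\R$. Properties (i) and (ii) are then standard consequences of almost periodicity, by Bochner's characterization. The relative compactness of the orbit is the Bochner criterion itself. Recurrence follows from the relative density of $\eps$-almost periods, which gives some $\tau\ge T$ with $\sup_t\|\Ub(t+\tau)-\Ub(t)\|_{H^{1/2}}\le\eps$; evaluating at $t=0$ gives (i).

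\emph{Step 1 (rational data).} For $\Ub_0\in\mathcal{R}at$, the operator $K_{\Ub(t)}$ has finite rank, and the explicit formula for $\Us(t)$ (Section \ref{sec:road}) should express $\Ub(t)$ through finite-dimensional matrix exponentials. I expect $\Ub(t)(z)$ to be a rational function whose coefficients are built from $e^{it\lambda_j}$, where the $\lambda_j$ are finitely many frequencies coming from the spectrum of the compressed Lax operator on the finite-dimensional invariant space $\ran K_{\Ub_0}$. From such a representation, quasi-periodicity in $t$ with values in $H^s$ for every $s$ follows. Here one must check that the poles stay uniformly away from $\T$, which is the source of the a priori $H^s$ bounds. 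Quasi-periodic continuous maps into $H^{1/2}$ are almost periodic.

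\emph{Step 2 (uniform approximation).} Take rational $\Ub_{0,n}\to\Ub_0$ in $H^{1/2}$, using Theorem \ref{thm:dense}. Theorem \ref{thm:gwp}(iii) gives convergence only locally uniformly in time, whereas closedness of $\AP$ requires convergence that is uniform on all of $\R$. This is the main obstacle. The first attempt would be to use the Lax structure of Theorem \ref{thm:lax} and the conservation laws to get a time-uniform modulus of continuity of $\Phi_t$, exploiting that the low-regularity information of $\Ub(t)$ is controlled by the spectral data of $K_{\Ub_0}$, which are conserved.

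A more robust alternative is to prove directly that the orbit is relatively compact and that $t\mapsto\Phi_t(\Ub_0)$ is almost periodic as an orbit of a continuous group action on a compact set. The intended argument runs as follows.
\begin{itemize}
\item The orbit is uniformly tight in frequency. Since $\Tr(K_{\Ub(t)})=E$ and $K_{\Ub(t)}$ is unitarily equivalent to $K_{\Ub_0}$, splitting $K_{\Ub_0}$ into a finite-rank part plus a part of trace $<\eps$ should force the high-frequency $\dot H^{1/2}$ tail of $\Ub(t)$ to be small uniformly in $t$. This tail bound would be proven via the explicit formula applied to the finite-rank piece, whose contribution is rational with controlled poles.
\item Weak compactness plus energy conservation then upgrades this to strong compactness of the orbit closure $\mathcal{K}$ in $H^{1/2}$.
\item On the compact set $\mathcal{K}$, the flow is a continuous group action by Theorem \ref{thm:gwp}(iii),(iv). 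It preserves the energy and conserves the distance $\Tr(K)$ spectrally, which suggests the flow is equicontinuous on $\mathcal{K}$.
\item An equicontinuous group action on a compact metric space yields almost periodic orbits, by the Ellis/Bohr argument.
\end{itemize}
I expect equicontinuity, equivalently uniform-in-time continuous dependence, to be the genuinely hard step. For it I would try to use the stability principle for the explicit formula, with constants independent of $t$, applied to differences of rational approximants.
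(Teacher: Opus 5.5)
Your deduction of (i) and (ii) from almost periodicity is correct. Step~1 is essentially Lemma~\ref{lem:quasi_periodic}, except that the invariant space must be $\Ks=\Hs+\C\,\Pi\Ub_0+\C\mathds{1}_d$ rather than $\ran K_{\Ub_0}$, since $\Pi\Ub_0\notin\Hs$ in general.

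\textbf{The gap is Step~2.} None of the routes you list produces uniform-in-time control, and the first one cannot.
\begin{enumerate}
\item \emph{No time-uniform modulus of continuity.} This fails even among rational data. The exact solutions $\uu_v(t,\theta)=(\sqrt{1-v^2}\cos(\theta+vt),\sqrt{1-v^2}\sin(\theta+vt),v)$ of the $\Ss^2$-valued equation are $H^{1/2}$-close at $t=0$ as $v'\to v$. Yet at $t=\pi/|v-v'|$ their horizontal components are antipodal, so their distance in $\BC(\R;H^{1/2})$ stays bounded below.
\item \emph{Tightness is not implied by the spectral data.} The unitary equivalence $K_{\Ub(t)}=\Us(t)K_{\Ub_0}\Us(t)^*$ only fixes the spectrum. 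One has $\sum_{|n|>N}|n|\,|\widehat{\Ub(t)}_n|^2\lesssim\sum_j\lambda_j\|Q_{N/2}\,\Us(t)\Fb_j\|^2$, where $Q_M$ is the projection onto frequencies $\geq M$ and $(\lambda_j,\Fb_j)$ are the eigenpairs of $K_{\Ub_0}$. So tightness amounts to relative compactness of $\{\Us(t)\Fb_j\}_{t\in\R}$ in $L^2_+$, and nothing in your sketch provides it. The explicit formula is not additive in the spectral data of $K_{\Ub_0}$, so there is no \emph{rational contribution of the finite-rank piece}.
\item \emph{Equicontinuity is unsupported.} $\Tr(K)$ is a functional, not a distance between two states. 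Theorem~\ref{thm:stony} is purely qualitative, so there are no constants to make uniform. On the orbit closure this step is essentially equivalent to the theorem itself, and it remains unproved.
\end{enumerate}

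\textbf{The missing idea} is to work with the hull of the linear group $\Om(t)=\eu^{-itT_{\Ub_0}}$ instead of the nonlinear flow.
\begin{enumerate}
\item Since $T_{\Ub_0}=\sum_\mu\mu P_\mu$ has pure point spectrum, every sequence $\Om(a_n)$ has a strongly convergent subsequence with unitary limit $\Om_\infty=\sum_\mu\omega(\mu)P_\mu$. This is diagonal extraction on the phases, Lemma~\ref{lem:ap_compact}. By the explicit formula, $\Ub(t)$ depends on $t$ only through $\Om(t)$.
\item For such a limit, the contraction $\Us_\infty$ built from $\Om_\infty S^*$ still satisfies $S^*\Us_\infty=\Us_\infty\Om_\infty S^*$ and $T_{\Ub_\infty}\Us_\infty=\Us_\infty T_{\Ub_0}$. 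The eigenfunction-transport argument of Lemma~\ref{lem:eigenfunctions}, which uses near $S^*$-invariance of eigenspaces, then gives $\ker\Us_\infty=\{0\}$.
\item By Theorem~\ref{thm:stony}, $\Us_\infty$ is unitary. Hence $E[\Ub_\infty]=E[\Ub_0]$, and the weak limit is a strong one in $H^{1/2}$ (Lemma~\ref{lem:ap_convergence}).
\item Uniformity in $t$ of $\Ub(t+a_n')\to\Ub_\infty(t)$ follows by choosing near-maximizing times $t_n$, extracting again so that $\Om(t_n')$ converges, and using $\Om(t_n'+a_n')=\Om(t_n')\Om(a_n')$.
\end{enumerate}
This verifies Bochner's criterion directly, with no rational approximation at all. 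It is also exactly the input your tightness and equicontinuity bullets are missing. It exhibits $\Ub(t)$ as a continuous function of $\Om(t)$ on the strong closure of $\{\Om(t)\}_{t\in\R}$, which is a compact abelian group on which time evolution acts by translation.
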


In the case of rational initial data, we can actually obtain the following refinement by showing quasi-periodicity in time, which corresponds to a linear flow on some torus $\T^N=(\R/2\pi \Z)^N$. In particular, we obtain a-priori bounds on all Sobolev norms for rational solutions of \eqref{eq:HWM}. Here is the precise statement.

\begin{thm}[Quasi-periodicity and a-priori bounds for rational data] \label{thm:rat_qp}
Let $d \geq 2$ and $1 \leq k \leq d-1$ be integers. For every rational initial datum $\Ub_0 \in \mathcal{R}at(\T; \Gr_k(\C^d))$, the map $t \mapsto \Phi_t(\Ub_0)$ is \textbf{quasi-periodic} in time. That is, it holds that
$$
\Phi_t(\Ub_0) = \mathbf{G}( t \bm{\omega} ) \quad \mbox{for all $t \in \R$},
$$
where $\mathbf{G} : \T^N \to \mathcal{R}at(\T; \Gr_k(\C^d))$ is some map depending only on $\Ub_0$ with some integer $N \geq 1$ and some constant $\bm{\omega} \in \R^N$. 

In particular, we obtain the a-priori bounds
$$
\sup_{t \in \R} \| \Phi_t(\Ub_0) \|_{H^s} \lesssim_{\Ub_0, s} 1 \quad \mbox{for all $s \geq 0$}.
$$
\end{thm}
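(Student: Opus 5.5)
The plan is to use the explicit formula for rational data and reduce everything to a finite-dimensional linear flow. For $\Ub_0 \in \mathcal{R}at(\T;\Gr_k(\C^d))$, Kronecker's theorem gives that $K_{\Ub_0} = H_{\Ub_0}^* H_{\Ub_0}$ has finite rank. Hence the invariant subspace
$$
\mathcal{H}_0 := \ran(K_{\Ub_0}) + \ran(T_{\Ub_0} K_{\Ub_0}) \subset L^2_+
$$
is finite dimensional; more precisely, I would take the smallest subspace containing $\ran H_{\Ub_0}^*$ that is invariant under $T_{\Ub_0}$ and under the adjoint shift $S^*$. I would then write the solution through the explicit formula of Section \ref{sec:road}:
$$
\Pi \Ub(t) = \Big\langle (\id - z\, \eu^{-itT_{\Ub_0}} S^* \eu^{itT_{\Ub_0}} \cdots)^{-1} \Pi\Ub_0 ,\, \mathds{1}\Big\rangle\text{-type expression,}
$$
in which, after compressing to the finite-dimensional space $\mathcal{H}_0$, the time dependence enters only through $\eu^{\pm itA}$ for a fixed self-adjoint matrix $A$ (the compression of $T_{\Ub_0}$, or of the generator appearing in the Lax formula). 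Diagonalizing $A$ with eigenvalues $\lambda_1,\dots,\lambda_m$, every entry of $\eu^{itA}$ is a trigonometric polynomial in $t$ with frequencies $\lambda_j$. I would set $\bm\omega := (\lambda_1,\dots,\lambda_m)$ (or a $\Q$-basis of differences) and replace $\eu^{it\lambda_j}$ by torus variables $\eu^{i\phi_j}$. This defines a candidate $\mathbf{G}(\bm\phi)(z) = \langle (\id - z\, M(\bm\phi))^{-1} X, Y\rangle$, with $M(\bm\phi) = W(\bm\phi)^* S^*|_{\mathcal{H}_0} W(\bm\phi)$ and $W(\bm\phi)$ the unitary diagonal matrix.

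The key steps, in order, are the following. (1) Establish the finite-dimensional explicit formula $\Phi_t(\Ub_0) = \mathbf{G}(t\bm\omega)$ for all $t \in \R$; this uses the global rational solutions from the Lemma together with Theorem \ref{thm:lax}. (2) Show that $\mathbf{G}$ is well defined and smooth on the whole torus $\T^N$. This requires the spectral radius of $M(\bm\phi)$ to be $<1$ for every $\bm\phi$, so that $(\id - zM(\bm\phi))^{-1}$ is analytic in a neighborhood of $\ov{\D}$ uniformly in $\bm\phi$. (3) Show that $\mathbf{G}(\bm\phi)$ takes values in $\mathcal{R}at(\T;\Gr_k(\C^d))$. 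The constraints $U^*=U$, $U^2=\mathds{1}$ and $\Tr U = d-2k$ hold at $\bm\phi = t\bm\omega$ for all $t$. They are closed, real-analytic conditions in $\bm\phi$, so they extend to the closure of $\{t\bm\omega \bmod 2\pi\}$. Replacing $\T^N$ by this closure (a subtorus), or reparametrizing by a rationally independent basis of frequencies so that the orbit is dense, makes them hold on all of $\T^N$. Rationality is then automatic from the resolvent form. (4) The Sobolev bounds follow: $\bm\phi \mapsto \mathbf{G}(\bm\phi) \in H^s$ is continuous on the compact $\T^N$ for every $s$, because the poles of $\mathbf{G}(\bm\phi)$, namely the reciprocals of the eigenvalues of $M(\bm\phi)$, stay in $|z| \geq r > 1$ uniformly. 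Therefore $\sup_t \|\Phi_t(\Ub_0)\|_{H^s} \le \sup_{\T^N} \|\mathbf{G}\|_{H^s} < \infty$.

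The main obstacle is step (2), the uniform bound $\sup_{\bm\phi} \rho(M(\bm\phi)) < 1$. Along the trajectory it holds because $\Ub(t)$ is smooth and rational, but on the torus closure one must exclude eigenvalues of modulus one. I would first try to show that $M(\bm\phi)$ is a compression of $S^*$ conjugated by a unitary on a finite-dimensional space containing no nonzero vector with $\|M^n v\|=\|v\|$ for all $n$. Equivalently, an eigenvalue of modulus one would produce an $S^*$-invariant subspace on which $S^*$ acts isometrically, which is impossible in $L^2_+$ since $(S^*)^n \to 0$ strongly. This yields $\|M(\bm\phi)\| \le 1$ and $\rho(M(\bm\phi)) < 1$ pointwise. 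Compactness of $\T^N$ and continuity of $\rho$ then give the uniform bound. If this conjugation structure fails on the closure, the fallback is to exploit that the energy $\Tr(K)$ and the spectrum of $K$ are constant on the orbit closure. Conserved energy prevents poles from reaching $\T$, because a pole approaching the unit circle would force $\|\cdot\|_{\dot H^{1/2}}$ to blow up, while the rank of $K$ stays fixed.
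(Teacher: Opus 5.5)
Your outline (finite-dimensional invariant subspace, torus parametrization, uniform spectral gap by compactness, then $H^s$ bounds) matches the paper's Lemma \ref{lem:quasi_periodic}. However, the step you yourself flag as the crux, step (2), is not established, and neither of your two proposed routes works.

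\textbf{The operator is misidentified.} By \eqref{eq:EF}, the resolvent to be controlled is that of the product $M(\bm\phi)=\Om(\bm\phi)S^*|_{\Ks}$, with $\Om(\bm\phi)=\sum_n \eu^{-i\phi_n}P_n$. It is not that of a conjugate $W(\bm\phi)^*S^*W(\bm\phi)$. A conjugate would have $\bm\phi$-independent spectrum, making step (2) vacuous, but it does not reproduce $\Pi\Ub(t)$.

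The operator $\eu^{-itT_{\Ub_0}}S^*$ is a unitary conjugate of $S^*$ only along the true flow, via $\Us(t)$ acting on all of $L^2_+$ (Lemma \ref{lem:S_star_unitary}). At a point of the orbit closure such a conjugation would require the unitarity of a limiting map $\Us_\infty$ from Section \ref{sec:ap}. Off the orbit closure there is none at all. Without extra structure, $\Om S^*$ with $\Om$ unitary can have the eigenvalue $1$: take $\Om$ swapping $\mathds{1}_d$ and $\eu^{i\theta}\mathds{1}_d$.

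\textbf{The fallback rests on a false claim.} Bounded energy and fixed rank do not keep poles away from $\T$, because the energy is conformally invariant. A degree-one Blaschke factor $B(\cdot,a)$ satisfies $\sum_n n|\widehat{B}_n|^2=1$ for every $a\in\D$, while its pole $1/\ov{a}$ tends to $\T$ as $|a|\to1$. The corresponding half-harmonic maps of the appendix have the same energy for every $a$. Moreover, constancy of $\Tr(K)$ on the orbit closure is not known a priori, since weak limits only give $E\le E[\Ub_0]$. Proving it is exactly Lemma \ref{lem:ap_convergence}, which needs the stability principle.

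\textbf{The missing idea} is the near-commutation $[S^*,T_{\Ub_0}]\Fb=(S^*\Pi\Ub_0)\Mean(\Fb)$ of Lemma \ref{lem:commutator}, combined with the fact that $\Om(\bm\phi)$ is a polynomial in $T_{\Ub_0}|_{\Ks}$. The argument runs as follows.
\begin{enumerate}
\item Suppose $\Om(\bm\phi)S^*\Fb=\lambda\Fb$ with $\Fb\in\Ks\setminus\{0\}$ and $|\lambda|=1$. Then $\|S^*\Fb\|=\|\Fb\|$ forces $\Mean(\Fb)=0$, since $SS^*=\id-\Mean$.
\item Hence $S^*T_{\Ub_0}\Fb=T_{\Ub_0}S^*\Fb$. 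It follows that $\ker(\Om(\bm\phi)S^*-\lambda\id)$ is $T_{\Ub_0}$-invariant, and therefore $\Om(\bm\phi)^{-1}$-invariant.
\item So this eigenspace contains $S^*\Fb=\lambda\Om(\bm\phi)^{-1}\Fb$. Iterating gives $\widehat{\Fb}_j=\Mean((S^*)^j\Fb)=0$ for all $j$, a contradiction.
\end{enumerate}
This is precisely what justifies your sentence about an $S^*$-invariant subspace on which $S^*$ acts isometrically. It is the adaptation of Lemma \ref{lem:sigma_p} used in Lemma \ref{lem:quasi_periodic}, and it holds at every $\bm\phi\in\T^N$ without reference to the flow. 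Compactness of $\T^N$ then gives your uniform $r<1$.

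\textbf{Two smaller points.}
\begin{enumerate}
\item Your $\mathcal{H}_0$ must contain $\Pi\Ub_0$ and $\mathds{1}_d$. The smallest $T_{\Ub_0}$- and $S^*$-invariant space containing $\ran H^*_{\Ub_0}$ is just $\Hs$, which in general does not contain $\Pi\Ub_0$. The paper uses $\Ks=\Hs+\C\Pi\Ub_0+\C\mathds{1}_d$.
\item Your step (3) is a genuine refinement. You reparametrize by rationally independent frequencies, so that $\{t\bm\omega\}$ is dense and the constraints $U^*=U$, $U^2=\mathds{1}_d$, $\Tr U=d-2k$ extend by continuity. The paper's Lemma \ref{lem:quasi_periodic} defines $\Gb$ on all of $\T^N$ but never checks that it is $\Gr_k(\C^d)$-valued off the orbit closure.
\end{enumerate}
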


\begin{remark*}
Note that the a-priori bounds on all $H^s$-norms for rational solutions of \eqref{eq:HWM} obtained above are a consequence of their quasi-periodicity in time -- and not deduced from a family of conserved quantities. A natural open question is to gain insight into the behavior of $H^s$-norms with $s > \frac 1 2$ for non-rational initial data. More ambitiously, it would be interesting to prove (or disprove) global well-posedness of \eqref{eq:HWM} in $H^s$ for $s> \frac 1 2$.
\end{remark*}

\section{Road map and remarks}

\label{sec:road}

\subsection{GWP for rational data}

As the initial step towards proving Theorem \ref{thm:gwp}, we consider rational initial data $\Ub_0 \in \mathcal{R}at(\T; \Gr_k(\C^d))$. Using the local well-posedness in $H^s$ for any $s > \frac 3 2$, we let $\Ub \in C(I; H^s)$ denote the corresponding solution of \eqref{eq:HWM} with $\Ub(0) = \Ub_0$ defined on its maximal time interval of existence $I$. Now, the analysis of the Lax pair structure on the Hardy space $L^2_+=L^2_+(\T; \C^{d \times d})$ allows to deduce that $\Ub$ can be expressed in terms of the {\em explicit formula} given by
\be \tag{EF} \label{eq:EF}
\boxed{\Pi \Ub(t,z) = \Mean \left ( ( \id - z \eu^{-itT_{\Ub_0}} S^*)^{-1} \Pi \Ub_0 \right ) \quad \mbox{for} \quad (t,z)  \in I \times \D}
\ee
where $T_{\Ub_0}$ denotes the Toeplitz operator defined in \eqref{def:Toep}. Here we recall that $\Pi$ denotes the Cauchy--Szeg\H{o} projection onto $L^2_+$ and the operator $\Mean$ is the mean, whereas $S^*$ stands for the backward (or right) shift operator on $L^2_+$. To further clarify the meaning of \eqref{eq:EF}, we remark that we use the canonical identification of elements $\Fb \in L^2_+(\T; \C^{d \times d})$ with holomorphic functions $\Fb : \D \to \C^{d \times d}$ with $\Fb(z) = \sum_{n \geq 0} \widehat{\Fb}_n z^n$ such that $\sum_{n \geq 0}|\widehat{\Fb}_n|_E^2 < \infty$, where $|\cdot|_E$ denotes the Frobenius norm of matrices in $\C^{d \times d}$; see Section \ref{sec:prelim} below for a recap on vector-valued Hardy spaces. The interested reader will also find  below some discussion on the relation of \eqref{eq:EF} to other explicit formulae that have been found for completely integrable PDEs, which was initiated by the work of the first author of the present paper.

To show global existence, i.e., we have that $I = \R$ holds, it suffices to obtain an a-priori bound
\be \label{ineq:apriori_intro}
\sup_{t \in I} \| \Ub(t) \|_{H^s} < \infty
\ee
for any given $s > \frac 3 2$. To achieve this goal, we can adapt the strategy in \cite{GeLe-25} for the half-wave maps equation $\R$ for rational initial data. More precisely, it turns out that the rationality of $\Ub_0 \in \mathcal{R}at(\T; \Gr_k(\C^d))$ in concert with Kronecker's theorem for Hankel operators allows us to single out a {\em finite-dimensional} subspace $\Ks \subset L^2_+$ with the following properties. 
\begin{itemize}
\item[1)] $\Pi \Ub_0 \in \Ks$.
\item[2)] $(\id - z \eu^{-it T_{\Ub_0}} S^*)^{-1} : \Ks \to \Ks$ for all $t \in \R$ and $z \in \D$.
\end{itemize}
Hence we see that the analysis of \eqref{eq:EF} for rational data completely reduces to matters in some finite-dimensional subspace (and the corresponding dynamics could be described by a finite system of nonlinear ODEs). In particular, the question of obtaining a-priori bounds boils down to ruling out potential eigenvalues of the contraction $\eu^{-itT_{\Ub_0}} S^* : \Ks \to \Ks$ on the unit circle for all times, i.e.,
\be \label{eq:eigen_T}
\sp(\eu^{-it T_{\Ub_0}} S^* |_\Ks) \cap \pt \D = \emptyset \quad \mbox{for all $t \in \R$}.
\ee 
Indeed, we can derive \eqref{eq:eigen_T} by exploiting commutator properties of $T_{\Ub_0}$ with the backward shift $S^*$. Once the spectral property \eqref{eq:eigen_T} has been established, we obtain strong uniform bounds on the resolvent $(\id - z \eu^{-it T_{\Ub_0}} S^*)^{-1}$ on $\Ks$. By going back to \eqref{eq:EF}, this allows us to deduce the desired a-priori bound \eqref{ineq:apriori_intro} and we thus obtain global existence for all rational data. In fact, a slightly more refined analysis of \eqref{eq:EF} yields the quasi-periodicity for rational solutions in Theorem \ref{thm:rat_qp} above.

\subsection{GWP in $H^{1/2}$}
For non-rational data $\Ub_0 \in H^{1/2}(\T; \Gr_k(\C^d))$, we  choose a sequence $\Ub_{0,n} \in \mathcal{R}at(\T; \Gr_k(\C^d))$ such that $\Ub_{0,n} \to \Ub_0$ in $H^{1/2}$ thanks to the density result in Theorem \ref{thm:dense}. Let $\Ub_n \in C^\infty(\R \times \T)$ denote the corresponding global smooth solutions of \eqref{eq:HWM} with initial datum $\Ub_n(0) = \Ub_{0,n}$. Next, by invoking the explicit formula \eqref{eq:EF} valid for each $\Ub_n(t)$ and by using energy conservation for these smooth solutions, we can deduce that
\be \label{eq:conv_U}
\mbox{$\Ub_n(t) \to \Ub(t)$ in $L^2$ and $\Ub_n(t) \weakto \Ub(t)$ in $H^{1/2}$ for any $t \in \R$}.
\ee
Here the limit $\Ub \in C(\R; H^{1/2}_{\mathrm{w}})$ is a weak solution of \eqref{eq:HWM} with $\Ub(0) = \Ub_0$ and it is given by the explicit formula
\be \tag{EF}
\Pi \Ub(t,z) = \Mean \left ( ( \id - z \eu^{-it T_{\Ub_0}} S^*)^{-1} \Pi \Ub_0 \right ) \quad \mbox{for $(t,z) \in \R \times \D$},
\ee
which in particular shows that the limit $\Ub(t)$ is independent of the chosen sequence of approximating rational data $\Ub_{0,n}$. However, the weak convergence in $H^{1/2}$ only guarantees the inequality
$$
E[\Ub(t)] \leq E[\Ub_0] \quad \mbox{for all $t \in \R$}.
$$
Now, the major key step towards proving global well-posedness in the sense of Theorem \ref{thm:gwp} rests on showing that energy conservation holds, i.e., we have 
$$
E[\Ub(t)] = E[\Ub_0] \quad \mbox{for all $t \in \R$}
$$
and thus the limit $\Ub \in C(\R; H^{1/2})$ is in fact continuous.

However, the analysis of \eqref{eq:EF} for non-rational data $\Ub_0 \in H^{1/2}$ is substantially more complicated, since the analogue of the subspace $\Ks \subset L^2_+$ from before must be {\em infinite-dimensional} due to Kronecker's theorem. In particular, the spectral property \eqref{eq:eigen_T} -- although also true for non-rational $\Ub_0$ -- is far from sufficient to get enough control from \eqref{eq:EF} directly. To overcome this obstacle, we devise the following strategy. 

Given a sequence $\Ub_{0,n} \in \mathcal{R}at(\T; \Gr_k(\C^d))$ with $\Ub_{0,n} \to \Ub_0$ in $H^{1/2}$, we introduce the following maps $\Us_n(t) : L^2_+ \to L^2_+$ by setting
\be
(\Us_n(t) \Fb)(z) := \Mean  \big ( (\id - z \eu^{-it T_{\Ub_{0,n}}} S^*)^{-1} \Fb \big ) 
\ee
for $t \in \R$, $z \in \D$, and $\Fb \in L^2_+$. Likewise, we define the map $\Us(t) : L^2_+ \to L^2_+$ as
\be \label{def:Us_intro}
(\Us(t) \Fb)(z) := \Mean \left ( (\id - z \eu^{-it T_{\Ub_{0}}} S^*)^{-1} \Fb \right ).
\ee
By the Lax evolution for the smooth rational solutions $\Ub_n \in C^\infty(\R \times \T)$, we can deduce the following properties for all $t \in \R$ and $n \in \N$:
\begin{enumerate}
\item[1)] $\pt_t \Us_n(t) = B_{\Ub_n(t)} \Us_n(t)$ with $\Us_n(0) = \id$.
\item[2)] $\Us_n(t) : L^2_+ \to L^2_+$ is unitary. 
\item[3)] $T_{\Ub_n(t)} \Us_n(t) = \Us_n(t) T_{\Ub_{0,n}}$.
\end{enumerate}
Note that the convergence properties in \eqref{eq:conv_U} imply that $T_{\Ub_n(t)} \to T_{\Ub(t)}$ strongly as operators for any $t \in \R$, which is seen to yield, for every $t \in \R$, that 
$$
\mbox{$\Us_n(t) \weakto \Us(t)$ weakly as operators}.
$$ 
Moreover, by using the strong operator convergence of the self-adjoint operators $T_{\Ub_n(t)}$, we can pass to the limit in (3) to deduce the intertwining relation
\be \label{eq:intertwining_intro}
T_{\Ub(t)} \Us(t) = \Us(t) T_{\Ub_0} \quad \mbox{for all $t \in \R$}.
\ee
Now the central question is whether the map $\Us(t) : L^2_+ \to L^2_+$ is {\em unitary} for all $t \in \R$? If so, then we can use \eqref{eq:intertwining_intro} to infer the unitary equivalence
$$
K_{\Ub(t)} = \Us(t) T_{\Ub_0} \Us(t)^* \quad \mbox{for all $t \in \R$}
$$
for the trace-class operator $K_{\Ub(t)} = \id - T_{\Ub(t)}^2$. Once this is established, we can directly deduce the desired energy conservation in view of $E[\Ub(t)] = \Tr (K_{\Ub(t)}) = \Tr(K_{\Ub_0}) = E[\Ub_0]$.

To establish that the limiting map $\Us(t)$ is indeed unitary, we derive the following {\em stability principle} for explicit formulae in a general setting that goes beyond the specifics of \eqref{eq:HWM}. 

\begin{thmn}[Stability principle for explicit formulae]
Let $E$ be a complex Hilbert space and suppose that $L : \dom(L) \subset L^2_+(\T; E) \to L^2_+(\T; E)$ is a (possibly unbounded) self-adjoint operator. For $t \in \R$, we define the map $\Us_L(t) : L^2_+(\T; E) \to L^2_+(\T;E)$ by setting
$$
(\Us_L(t) \Fb)(z) := \Mean \left ( ( \id- z \eu^{-it L} S^*)^{-1} \Fb \right ) \quad \mbox{with $z \in \D$}.
$$
Then $\Us_L(t)$ is unitary for all $t \in \R$ if and only if one of the following equivalent conditions holds.
\begin{enumerate}
\item[(i)] $\ker \, \Us_L(t) = \{ 0 \}$ for all $t\in \R$.
\item[(ii)] $\displaystyle \lim_{j \to \infty} \| (\eu^{-it L} S^*)^j \Fb \|_{L^2} = 0$ for all $t \in \R$ and $\Fb \in L^2_+(\T; E)$. 
\end{enumerate}
\end{thmn}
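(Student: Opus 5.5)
The plan is to expand the resolvent in a Neumann series and reduce everything to two elementary operator identities. Fix $t \in \R$ and set $V := \eu^{-itL} S^*$. Since $\eu^{-itL}$ is unitary (by the functional calculus for the self-adjoint $L$) and $S^*$ is a contraction, $V$ is a bounded contraction on $L^2_+(\T;E)$. So for $z \in \D$ we have $(\id - zV)^{-1} = \sum_{j \geq 0} z^j V^j$, and
$$
(\Us_L(t)\Fb)(z) = \sum_{j \geq 0} z^j \, \Mean(V^j \Fb),
$$
which says that the $j$-th Taylor coefficient of $\Us_L(t)\Fb$ is $\Mean(V^j\Fb) \in E$. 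Throughout, $\Mean : L^2_+(\T;E) \to E$ is the zeroth Fourier coefficient, with adjoint $\Mean^* a = a$ (the constant function). I will use the identities $S^*S = \id$ and $\Mean S = 0$, and also $\|\Gb\|^2 = |\Mean(\Gb)|^2 + \|S^*\Gb\|^2$ for every $\Gb \in L^2_+$.

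\emph{Step 1 (norm identity).} Apply the last identity to $\Gb = V^j\Fb$ and use the unitarity of $\eu^{-itL}$, which gives $\|V^{j+1}\Fb\| = \|S^*V^j\Fb\|$. This yields $|\Mean(V^j\Fb)|^2 = \|V^j\Fb\|^2 - \|V^{j+1}\Fb\|^2$. Summing over $j$ by Parseval, the sum telescopes, and the limit exists because the sequence is monotone:
$$
\|\Us_L(t)\Fb\|^2 = \|\Fb\|^2 - \lim_{j\to\infty}\|V^j\Fb\|^2 .
$$
In particular $\Us_L(t)$ is a contraction. It is an isometry if and only if (ii) holds at this $t$. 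Moreover, $\Fb \in \ker \Us_L(t)$ means $\Mean(V^j\Fb)=0$ for all $j$, which by the same computation means $\|V^j\Fb\| = \|\Fb\|$ for all $j$.

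\emph{Step 2 (coisometry).} I claim that $\Us_L(t)\Us_L(t)^* = \id$ always, so that $\Us_L(t)^*$ is an isometry. From the coefficient description we get
$$
\Us_L(t)^*\Gb = \sum_{j\ge 0} (V^*)^j \Mean^*\widehat{\Gb}_j,
$$
and the $k$-th coefficient of $\Us_L(t)\Us_L(t)^*\Gb$ is $\sum_j \Mean V^k (V^*)^j \Mean^* \widehat{\Gb}_j$. Next I use two facts. First, $VV^* = \eu^{-itL}S^*S\eu^{itL} = \id$, so $V^k (V^*)^j = (V^*)^{j-k}$ for $j \ge k$. Second, $V^* = S\eu^{itL}$ and $\Mean S = 0$, so $\Mean (V^*)^m \Mean^* = \delta_{m0}\,\id_E$; taking adjoints gives $\Mean V^m \Mean^* = \delta_{m0}\,\id_E$ for the terms with $j<k$. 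Hence $\Mean V^k (V^*)^j \Mean^* = \delta_{jk}\,\id_E$, and therefore $\Us_L(t)\Us_L(t)^* = \id$. Some care is needed to justify the interchange of sums, but since all series converge in $L^2$ (the map is bounded), it suffices to test on finitely supported $\Gb$ and extend by density.

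\emph{Step 3 (conclusion).} Since $\Us_L(t)$ is a coisometry, it is surjective, and $\id - \Us_L(t)^*\Us_L(t)$ is the orthogonal projection onto $\ker\Us_L(t)$. Thus $\Us_L(t)$ is unitary $\iff$ $\ker\Us_L(t)=\{0\}$ $\iff$ $\Us_L(t)$ is isometric $\iff$ (ii) holds at this $t$, the last equivalence by Step 1. Because this holds for each fixed $t$, it also gives the statement quantified over all $t\in\R$. I expect the main obstacle to be Step 2, since surjectivity is where one might naively expect to need a dynamical argument such as a group property in $t$. The identity $VV^*=\id$, which comes from $S^*S=\id$ together with the unitarity of $\eu^{-itL}$, removes that difficulty entirely, so the only real analytic content is the telescoping in Step 1.
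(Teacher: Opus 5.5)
Your argument is correct. Your Step~1 is exactly the telescoping identity of Lemma~\ref{lem:UsSigma}(i). The difference lies in how you show that injectivity forces unitarity.

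The paper passes to the isometry $\Sigma = S\eu^{itL}$ (your $V^*$) and introduces $\Xs = \ov{\mathrm{lin}}\,\{\Sigma^j E\}_{j \geq 0}$. It then proves three things:
\begin{enumerate}
\item[(a)] $\Us_\Sigma$ is isometric on $\Xs$, since $(\Sigma^*)^{m+1}$ annihilates finite combinations $\sum_{j \leq m} \Sigma^j \Eb_j$;
\item[(b)] $\Us_\Sigma$ maps $\Xs$ onto $L^2_+$, via $\Us_\Sigma|_E = \id$ and the intertwining $S\Us_\Sigma = \Us_\Sigma \Sigma$;
\item[(c)] $\Xs^\perp \subset \ker \Us_\Sigma$, so a trivial kernel forces $\Xs = H$.
\end{enumerate}

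You instead write down the adjoint, $\Us_L(t)^* \Gb = \sum_{j} (V^*)^j \widehat{\Gb}_j$ (with $\widehat{\Gb}_j$ viewed as constants). You then check $\Us_L(t)\Us_L(t)^* = \id$ through the Gram relations $\Mean V^k (V^*)^j \Mean^* = \delta_{jk}\,\id_E$, which rest only on $VV^* = \id$ and $\Mean S = 0$.

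The two arguments encode the same Wold-type structure: your $\Us_L(t)^*$ is precisely the inverse of the paper's unitary $\Us_\Sigma|_{\Xs}$, and its range is $\Xs$. Your packaging is shorter. It also makes explicit the unconditional fact that $\Us_L(t)$ is always a co-isometry. Hence $\id - \Us_L(t)^*\Us_L(t)$ is the projection onto $\ker \Us_L(t)$, and the equivalences unitary $\Leftrightarrow$ injective $\Leftrightarrow$ isometric $\Leftrightarrow$ (ii) are immediate for each fixed $t$.

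In exchange, the paper's route produces two by-products: the intertwining relation $S^* \Us(t) = \Us(t)\, \eu^{-itL} S^*$ and the identity $\Us(t)|_E = \id$. Both are reused later, in the transport of eigenfunctions in Lemma~\ref{lem:eigenfunctions} and in the almost periodicity argument. The paper's route also sets up the abstract isometry framework behind the convergence result of Lemma~\ref{lem:Us_Sigma_conv}. These facts are easy to read off from your coefficient formula, but you would need to state them separately for those downstream uses.
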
 

\begin{remarks*}
1) The proof of this theorem rests on some operator-theoretic analysis, which connects to the circle of ideas of the Wold decomposition for isometries on Hilbert spaces. However, we will assume no operator-theoretic background and provide a self-contained approach.

2) Condition (ii) means that the discrete semigroup $\{ (\eu^{-it L} S^*)^j \}_{j \in \N}$ is {\em strongly stable} for each $t \in \R$. This is a natural condition from an abstract operator point of view. Owed to this fact, we have chosen to refer to the result above as the {\em stability principle} for explicit formulae.

3) The triviality of the kernel of $\Us_L(t)$ stated in (i) can be seen as a sort of {\em nondegeneracy} condition. In the context of explicit formulae for completely integrable PDEs, this nondegeneracy seems easier to be verified than (ii). See in particular the proof below, where we establish that $\ker \, \Us(t) = \{ 0 \}$ in the context of (HWM).

4) However, we remark that we can also find examples where $\ker \, \Us(t) \neq \{ 0 \}$ occurs for some $t \in \R$ with a suitable choice for the self-adjoint operator $L$. For instance, this occurs for the explicit formula for the {\em zero-dispersion limit} for the Benjamin--Ono (BO) equation, which corresponds to breakdown (shock formation) of solutions in finite time. In Subsection \ref{subsec:ZD_BO} below, we compare  the explicit formulae for the half-wave maps equation and the zero-dispersion limit of (BO).

5) In a companion work on global well-posedness for \eqref{eq:HWM} on the real line, we derive an analogous stability principle for explicit formulae in the Hardy space $L^2_+(\R;E)$.
\end{remarks*}

In view of the general result above, it remains to show that $\Us(t)$ defined in \eqref{def:Us_intro} satisfies $\ker \, \Us(t) = \{ 0 \}$ for all $t \in \R$. Let us briefly sketch the proof as follows. From the intertwining relation \eqref{eq:intertwining_intro} we see that eigenvectors $\Fb \in L^2_+$ of $T_{\Ub_0}$ are transported by $\Us(t)$: If $T_{\Ub_0} \Fb = \mu \Fb$ for some eigenvalue $\mu$, then $\Fb(t) := \Us(t) \Fb$ satisfies the equation
$$
T_{\Ub(t)} \Fb(t) = \mu \Fb(t).
$$
However, the crucial step is now to show that $\Fb(t)$ is actually an eigenvector of $T_{\Ub(t)}$ as well, i.e., we have $\Fb(t) \neq 0$. Here, the specifics of the Lax operator $T_{\Ub_0}$ for \eqref{eq:HWM} enter, where we make use of the crucial property that eigenspaces of Toeplitz operators are {\em nearly $S^*$-invariant}\footnote{The nearly $S^*$-invariance property can be seen as a substitute for the lack of elliptic regularity for eigenfunctions of the zero-order operator $T_{\Ub_0}$. By contrast, elliptic regularity for eigenfunctions of the first-order Lax operators $L$ for (BO) and (CS-DNLS) on $\T$ can be used to gain control of Sobolev norms in the transport by $\Us(t)$ to rule out the vanishing of $\Fb(t)=\Us(t) \Fb$; see \cite{Ba-24}.}. We refer to Section \ref{sec:gwp} for details. 

Finally, once we have shown that $\Us(t)$ maps eigenfunctions of $T_{\Ub_0}$ to eigenfunctions of $T_{\Ub(t)}$, the triviality of $\ker \, \Us(t) = \{ 0 \}$ can be derived by using that the spectrum of $T_{\Ub_0}$ is pure point, i.e., there is a basis of $L^2_+$ consisting of eigenfunctions of $T_{\Ub_0}$ together with the fact that $T_{\Ub(t)}$ is self-adjoint.

\subsection{Almost periodicity} We now discuss how to prove almost periodicity of the solution $\Ub(t) = \Phi_t(\Ub_0) \in C(\R; H^{1/2})$ with initial datum $\Ub_0 \in H^{1/2}(\T; \Gr_k(\C^d))$. By Bochner's criterion for almost periodicity, it suffices to show that the set of translates of $\Ub \in C(\R;H^{1/2})$ defined as
$$
\mathrm{Trans}(\Ub) := \{ \Ub(\cdot + a) : a \in \R \}
$$
is relatively compact in the Banach space of bounded and continuous functions $\BC(\R; H^{1/2})$ equipped with the $\sup$-norm.

To show this property, we make use of the explicit formula \eqref{eq:EF} as follows. First, by the construction of the flow map $\Phi$, we have
$$
\Ub(t) = \Phi_t(\Ub_0) = (\Us(t) \Pi \Ub_0) + (\Us(t) \Pi \Ub_0)^* - \Mean (\Ub_0).
$$
For the sake of the proof, let us write the unitary map $\Us(t) : L^2_+ \to L^2_+$ as
$$
(\Us(t) \Fb)(z) = \Mean \left ( (\id- z \Om(t) S^*)^{-1} \Fb \right ) \quad \mbox{for $(t, z) \in \R \times \D$}
$$
with the strongly continuous unitary one-parameter group
$$
\Om(t) := \eu^{-it T_{\Ub_0}} : L^2_+ \to L^2_+ \quad \mbox{for $t \in \R$}.
$$
 Since $T_{\Ub_0}$ has at most countable pure point spectrum, a standard Cantor diagonalization argument yields the following {\em compactness property}:  For every sequence $(a_n)$ in $\R$, there exists a subsequence denoted by $(a'_n)$ such that 
$$
\mbox{$\Om(a'_n) \to \Om_\infty$  strongly as operators}
$$ 
with some unitary map $\Om_\infty$ in $L^2_+$. Next, by revisiting the analysis of the explicit formula in the global well-posedness proof combined with the group property of $\Om(t)$, we can show that
$$
\sup_{t \in \R} \| \Ub(t+a'_n) - \Ub_\infty(t) \|_{H^{1/2}} \to 0 \quad \mbox{as $n \to \infty$},
$$
where $\Ub_\infty \in C(\R; H^{1/2})$ is given by the explicit-type formula
$$
\Pi \Ub_\infty(t,z) = \Mean \left ( ( \id - z \Omega(t) \Omega_\infty S^*)^{-1} \Pi \Ub_0 \right ) \quad \mbox{for $(t,z) \in \R \times \D$}.
$$  
This chain of steps proves the relative compactness of $\mathrm{Trans}(\Ub)$ in $\BC(\R; H^{1/2})$.

As a final remark, we note that proving almost periodicity by the explicit formula only involves that the corresponding Lax operator $L$ generating $\Om(t) = \eu^{-it L}$ has at most countable pure point spectrum. Thus we expect our arguments to carry over to (BO) and (CS-DNLS) on $\T$.  

\subsection{On the zero-dispersion limit of (BO)}
\label{subsec:ZD_BO}
We remark that \eqref{eq:EF} has a striking resemblance to the explicit formula for the zero-dispersion limit for (BO), first derived in \cite{Ga-23a} (and adapted in \cite{Ma-25}) in the periodic setting, and in \cite{Ge-24} by the first author of this paper on the line. More precisely, we recall from \cite{Ga-23a, Ma-25} that for initial data $u_0 \in L^\infty(\T;\R)$ the zero-dispersion limit of (BO) denoted by $ZD[u_0](t) \in C(\R; L^2_{\mathrm{w}}(\T;\R))$ is given by the explicit formula (adapted to our notation) 
$$
\Pi (ZD[u_0])(t,z) = \Mean \left ( (\id - z \eu^{-it T_{u_0}} S^*)^{-1} \Pi u_0 \right ) \quad \mbox{with $(t,z) \in \R \times \D$},
$$
where $\Mean = \langle \cdot | 1 \rangle$ with the constant function 1 corresponds to the mean in $L^2_+(\T; \C)$. Here $T_{u_0} : L^2_+(\T; \C) \to L^2_+(\T;\C)$ is the Toeplitz operator with real-valued symbol $u_0 \in L^\infty(\T;\R)$. However, it is known that $ZD[u_0]$ can exhibit loss of $L^2$-norm after the first breaking time 
$$
T_+ = - \frac{1}{2 \min_{\theta \in \T} u_0'(\theta)} > 0
$$
for any non-constant initial datum $u_0 \in C^1(\T; \R)$, which corresponds to the onset of a shock at $T_+>0$ for the inviscid Burger's equation $\pt_t u + 2 u \pt_\theta u = 0$ with $u |_{t=0}=u_0$. Indeed, for $u_0(\theta) = -\cos  \theta$ and hence $T_+ = \frac{1}{2}$, we deduce from \cite{Ma-25} together with \cite{Ga-23}[Lemma 20] the strict inequality
$$
\| \Pi (ZD[u_0])(t) \|_{L^2} < \| \Pi u_0 \|_{L^2} \quad \mbox{for} \quad T_+ < t < T_+ + \eps
$$ 
with some $\eps > 0$ sufficiently small. Hence the explicit formula above fails to yield a unitary map on $L^2_+$ right after the breaking time $T_+> 0$. In view of the stability theorem above, this means that the discrete semigroup $\{ A_t^j \}_{j \in \N}$ with the contraction
$$
A_t = \eu^{-it T_{\cos \theta}} S^*  = \eu^{-\frac{i}{2} t (S + S^*)} S^*
$$ 
is not strongly stable for $T_+ < t < T_+ + \eps$. From an operator-analytic point of view, it would be interesting to prove this failure of strong stability for $A_t$ directly, without resorting to the shock formation above.

In view of the preceding discussion, it is a remarkable fact that we can rule out formation of shocks for \eqref{eq:HWM} in contrast to the zero-dispersion limit for (BO). As a matter of fact, the deeper reason lies in the entirely different spectral properties of the Toeplitz operators $T_{u_0}$ and $T_{\Ub_0}$. More precisely, for all non-constant $u_0 \in C(\T; \R)$ and any $\Ub_0 \in H^{1/2}(\T; \Gr_k(\C^d))$, it holds that:
\begin{itemize}
\item $T_{u_0}$ has no point spectrum $\sp(T_{u_0}) = \emptyset$ but only purely a.c.-spectrum.
\item $T_{\Ub_0}$ has only pure point spectrum $\sigma(T_{\Ub_0}) = \sp(T_{\Ub_0})$.
\end{itemize}
In particular, these spectral properties of $T_{\Ub_0}$ allow us to establish the unitarity of the map $\Us(t)$ in the explicit formula for \eqref{eq:HWM}.

\section{Notation and preliminaries}

\label{sec:prelim}

\subsection{Vector-valued Hardy spaces and shift operators}

We recall some basic facts and notions from the theory of $L^2$-based Hardy spaces of functions taking values in a given Hilbert space, also referred to as vector-valued Hardy spaces. The interested reader may consult \cite{SzFoBeKe-10, Pe-03} for more background.

Let $E$ be a complex (not necessarily separable) Hilbert space with inner product $\langle \cdot | \cdot \rangle_E$ and corresponding norm $|\cdot |_E$.  We denote by $L^2(\T; E)$ the Hilbert space of strongly measurable and square-integrable functions on $\T$ valued in $E$ with the inner product 
$$
\langle \Fb | \Gb \rangle = \frac{1}{2 \pi} \int_\T \langle \Fb(\theta) | \Gb(\theta) \rangle_E \, d \theta .
$$
By Parseval's theorem, we notice that
$$
 L^2(\T; E) = \big \{ \Fb = \sum_{n=-\infty}^\infty \widehat{\Fb}_n \eu^{in \theta} : \sum_{n=-\infty}^\infty |\widehat{\Fb}_n|_E^2 < +\infty \big \},
$$
where $\widehat{\Fb}_n = \frac{1}{2 \pi} \int_0^{2\pi} \Fb(\theta) \, \eu^{-in \theta} \, d\theta \in E$ denotes the $n$-th Fourier coefficient of $\Fb$. 

Next, we let $\Pi$ denote the Cauchy--Szeg\H{o} projection onto the closed subspace of elements in $L^2(\T; E)$ whose negative Fourier coefficients vanish, that is, we set
$$
\Pi \left ( \sum_{n=-\infty}^\infty \widehat{\Fb}_n \eu^{int} \right ) = \sum_{n=0}^\infty \widehat{\Fb}_n \eu^{int}.
$$
The range of $\Pi$ is the $L^2$-based Hardy space of functions valued in $E$ such that 
$$
L^2_+(\T;E) = \Pi(L^2(\T;E)) = \big \{ \Fb =\sum_{n=0}^\infty \widehat{\Fb}_n \eu^{int} : \widehat{\Fb}_n \in E, \; \sum_{n=0}^\infty |\widehat{\Fb}_n|^2_E < \infty  \big \},
$$
which is of course a Hilbert space in its own right. Note that $\Pi$ is an orthogonal projection and we denote by $\Pi_- =  \id - \Pi$ its complementing orthogonal projection with range
$$
L^2_-(\T; E) = \Pi_-(L^2(\T;E) = \big \{ \Fb =\sum_{n <0} \widehat{\Fb}_n \eu^{int} :  \widehat{\Fb}_n \in E, \; \sum_{n <0} |\widehat{\Fb}_n|^2_E < \infty \big  \}.
$$ 
To streamline our notation, we will henceforth often use the shorthand notation
$$
L^2 = L^2(\T; E) \quad \mbox{and} \quad L_{\pm}^2 = L_{\pm}^2(\T; E) 
$$ 
whenever the choice of $E$ is clear from the context. 

It is a classical fact that elements $\Fb \in L^2_+(\T; E)$ can be identified with holomorphic functions $\Fb : \D \to E$ such that 
$$
\sup_{r < 1} \int_0^{2 \pi} |\Fb(r \eu^{i \theta})|^2_E \, d\theta < +\infty,
$$
in which case the equality $\| \Fb \|_{L^2}^2 = \frac{1}{2\pi} \sup_{r < 1} \int_0^{2 \pi} |\Fb(r \eu^{i\theta})|^2_E \, d \theta$ holds and the radial limit $\lim_{r \to 1^{-}} \Fb(r \eu^{i \theta})$ exists for a.e.~$\theta \in \T$. By adapting common practice in the theory of Hardy spaces, we will freely make use of this canonical identification of $\Fb \in L^2_+(\T;E)$ with the corresponding holomorphic function $\Fb : \D \to E$ satisfying the condition stated above. 

On the Hardy space $L^2_+$, there is a canonical operator $S \in \Ls(L^2_+)$ together with its adjoint $S^* \in \Ls(L^2_+)$, which are given by
$$
S \Fb= S \left ( \sum_{n=0}^\infty \widehat{\Fb}_n \eu^{in \theta} \right ) := \sum_{n=1}^\infty \widehat{\Fb}_{n-1}  \eu^{in\theta}, 
$$
$$
S^* \Fb= S^*\left ( \sum_{n=0}^\infty \widehat{\Fb}_n \eu^{in \theta} \right ) = \sum_{n=0}^\infty \widehat{\Fb}_{n+1} \eu^{in \theta}.
$$
The bounded operators $S$ and $S^*$ are usually referred to as the {\em forward shift} (or right shift) and the {\em backward shift} (or left shift) on $L^2_+$, respectively.  We easily note that, for any $\Fb \in L^2_+$ when regarded as a holomorphic function $\Fb=\Fb(z)$ with $z \in \D$, we can equivalently  write
$$
(S \Fb)(z) = z \Fb(z), \quad (S^* \Fb)(z) = \frac{\Fb(z)-\Fb(0)}{z}.
$$
In particular, the backward shift $S^*$ will play a central role in the analysis of (HWM) on $\T$. Moreover, we recall the fundamental identities
\be \label{eq:Sdefect}
S^* S \Fb = \Fb \quad \mbox{and} \quad S S^* \Fb = \Fb - \Mean(\Fb),
\ee
where 
$$
 \Mean(\Fb) =\frac{1}{2 \pi} \int_0^{2 \pi} \Fb(\eu^{i \theta}) \, d\theta = \widehat{\Fb}_0.
$$
is the orthogonal projection onto the subspace of constant functions $E \subset L^2_+(\T; E)$ corresponding to taking the mean (or equivalently the $0$-th Fourier coefficient) of $\Fb \in L^2_+$. Since $\Mean((S^*)^k \Fb) = \widehat{\Fb}_k$ for all $k \in \N$, a geometric series expansion yields the identity
\be \label{eq:reproduce}
\boxed{\Fb(z) = \Mean \left ( ( \id - z S^*)^{-1} \Fb \right )}
\ee 
valid for all $\Fb \in L^2_+(\T; E)$ and $z \in \D$. In fact, we can regard \eqref{eq:reproduce} as a {\em reproducing kernel formula} for the Hardy space $L^2_+(\T;E)$.

In our analysis of \eqref{eq:HWM}, we will mostly consider the Hilbert space $E =\C^{d \times d}$ of the complex $d \times d$-matrices equipped with the standard (Frobenius) inner product 
$$
\langle A, B \rangle_E = \Tr (A B^*) = \sum_{j,k=1}^d A_{jk} \ov{B}_{jk}
$$
together with its corresponding norm $|A|_E = \langle A, A \rangle_E^{1/2}$ for matrices $A \in \C^{d \times d}$.

\subsection{Toeplitz and Hankel operators} Recall that $E$ is a complex Hilbert space. Let $\Ls(E)$ denote the Banach space of bounded linear operators on $E$ into itself equipped with the operator norm. We use $L^\infty(\T; \Ls(E))$ to denote the Banach space of (Bochner) measurable maps from $\T$ to $\Ls(E)$ with finite norm given by
$$
\| \Ub \|_{L^\infty} := \mathrm{ess \, sup}_{\theta \in \T} \| \Ub(\theta) \|_{\Ls(E)}.
$$
Again, we remark that we will be mainly interested in the choice $E = \C^{d \times d}$. Note that every  $U \in \C^{d \times d}$ can be naturally seen as an element in $\Ls(E)$ by acting on  elements on $E=\C^{d \times d}$ by left matrix multiplication. In particular, we see that $L^\infty(\T; \C^{d \times d}) \subset L^\infty(\T; \Ls(E))$ when $E=\C^{d \times d}$.
 
For given $\Ub \in L^\infty(\T; E)$, we define the corresponding {\em Toeplitz operator} with the symbol $\Ub$ by setting
$$
T_{\Ub} : L^2_+(\T;E) \to L^2_+(\T;E), \quad \Fb \mapsto T_\Ub \Fb := \Pi(\Ub \Fb). 
$$  
Likewise, we define the {\em Hankel operator} with symbol $\Ub$ to be the operator
$$
H_{\Ub} : L^2_+(\T; E) \to L^2_-(\T;E), \quad \Fb \mapsto H_{\Ub} \Fb := \Pi_-(\Ub \Fb).
$$
As an aside, we inform the reader that there exist alternative (but ultimately equivalent) ways of defining Hankel operators in the literature; e.g., as the anti-linear operator $\Fb \mapsto \Pi(U \ov{\Fb})$ or by using the unitary flip operator $J$ on $L^2$ with $J \Pi_- = \Pi J$. However, we prefer the definition of $H_\Ub$ above, as it turns out to be the most convenient one for the analysis of (HWM) below. In particular, we follow the definition of Hankel operators as done in \cite{Pe-03}.

Evidently, we have the operator norm bounds  with  $\| T_{\Ub} \| \leq \| \Ub \|_{L^\infty}$ and $\|H_{\Ub} \| \leq \|  \Ub \|_{L^\infty}$. Furthermore, it is readily checked that the adjoints of $T_{\Ub}^* : L^2_+ \to L^2_+$ and $H_{\Ub}^* : L^2_- \to L^2_+$ are found to be $T_\Ub^* f= \Pi (\Ub^* f)$ and $H_{\Ub}^* f = \Pi(\Ub^* f)$, where $\Ub^*$ denotes the pointwise adjoint of $\Ub : \T \to \Ls(E)$. In particular, we see that $T_{\Ub} = T_\Ub^*$ is self-adjoint if and only if $\Ub(\theta) = \Ub(\theta)^*$ is self-adjoint on $E$ for a.e.~$\theta \in \T$. In addition, we record two important facts about Hankel operators $H_\Ub$ with symbol $\Ub \in L^\infty(\T; E)$ that will be detailed and used in Section \ref{sec:spec} below.

\begin{itemize}
\item $H_\Ub$ is {\em Hilbert--Schmidt} if and only if $\sum_{n <0} |n| \| \widehat{\Ub}_n \|^2_{\Ls(E)} < \infty$. In the case $E=\C^{d \times d}$, the latter condition means that $\Pi_- \Ub$ belongs to the Sobolev space $H^{1/2}(\T; \C^{d \times d})$.
\item $H_\Ub$ has {\em finite rank} if and only if $\Pi_- \Ub$ is a rational function of $\eu^{i \theta}$ with $\theta \in \T$ with no poles on the unit circle.
\end{itemize}

Next, we recall the following commutator formula for Toeplitz operators $T_\Ub$ with the backward shift $S^*$. 

\begin{lem} \label{lem:commutator}
For $\Ub \in L^\infty(\T; \Ls(E))$ and $\Fb \in L^2_+(\T;E)$, it holds
$$
[S^*, T_\Ub] \Fb = S^* T_{\Ub}(\Mean(\Fb)).
$$

In particular, if  $E=\C^{d \times d}$ and $\Ub \in L^\infty(\T; \C^{d \times d})$, the identity above reads
$$
[S^*, T_\Ub] \Fb = (S^* \Pi \Ub)\Mean(\Fb),
$$
with the backward shift $S^*$ acting on $\Pi \Ub \in L^2_+(\T, \C^{d \times d})$.
\end{lem}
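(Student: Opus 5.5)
The plan is to decompose $\Fb$ via the defect identity \eqref{eq:Sdefect}, $\Fb = S S^* \Fb + \Mean(\Fb)$. Both sides of the claimed identity are linear and bounded in $\Fb$, so it suffices to verify it separately on the two pieces $S\Gb$ with $\Gb = S^*\Fb \in L^2_+$ and $\Mean(\Fb) \in E$, viewed as a constant function in $L^2_+$.

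For the first piece, $\Mean(S\Gb) = 0$, so we must show $[S^*, T_\Ub] S\Gb = 0$. The key observation is that $S$ is multiplication by $\eu^{i\theta}$ and $S^*$ acts on $L^2_+$ as $\Fb \mapsto \Pi(\eu^{-i\theta}\Fb)$. Then $T_\Ub S \Gb = \Pi(\Ub \eu^{i\theta}\Gb)$. Applying $S^*$ and using that $\Pi(\eu^{-i\theta}\Pi h) = \Pi(\eu^{-i\theta} h)$ for $h \in L^2$, we get $S^* T_\Ub S \Gb = \Pi(\Ub\Gb) = T_\Ub \Gb$. The identity $\Pi(\eu^{-i\theta}\Pi h) = \Pi(\eu^{-i\theta} h)$ holds because multiplying $\Pi_- h$ by $\eu^{-i\theta}$ keeps it in $L^2_-$. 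On the other hand, $T_\Ub S^* S \Gb = T_\Ub \Gb$ by $S^*S = \id$. Hence the commutator vanishes on $\ran(S)$.

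For the constant piece $c = \Mean(\Fb)$, we have $S^* c = 0$, so $[S^*, T_\Ub] c = S^* T_\Ub c - T_\Ub S^* c = S^* T_\Ub c$. Adding the two pieces gives $[S^*, T_\Ub]\Fb = S^* T_\Ub(\Mean(\Fb))$.

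For the matrix case $E = \C^{d\times d}$ with $\Ub$ acting by left multiplication, the constant $c = \Mean(\Fb)$ multiplies on the right. Thus $T_\Ub c = \Pi(\Ub c) = (\Pi\Ub)c$, since $\Pi$ acts on Fourier coefficients and commutes with right multiplication by a constant matrix. Likewise $S^*$ commutes with right multiplication by constants, which yields $(S^*\Pi\Ub)\Mean(\Fb)$. Here $\Pi\Ub \in L^2_+$ since $\Ub \in L^\infty \subset L^2$.

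There is no serious obstacle in this argument. The only point needing care is the identity $S^*\Pi = \Pi \eu^{-i\theta}$ on $L^2$, i.e.\ checking that the projection $\Pi$ interacts correctly with multiplication by $\eu^{\pm i\theta}$. This is a direct Fourier-coefficient check.
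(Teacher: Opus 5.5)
Your proof is correct and takes essentially the paper's route: both rest on the identity $S^* T_\Ub S = T_\Ub$ together with $SS^*\Fb = \Fb - \Mean(\Fb)$. The paper writes it in one line as $T_\Ub S^*\Fb = S^* T_\Ub S S^*\Fb$ rather than splitting $\Fb$ into two pieces, and your check of $S^*T_\Ub S = T_\Ub$ via $S^* = \Pi(\eu^{-i\theta}\,\cdot\,)$ on $L^2_+$ supplies the verification that the paper calls straightforward.
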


\begin{remark*}
 As a direct consequence of Lemma \ref{lem:commutator}, we deduce that eigenspaces of $T_\Ub$ are {\em nearly $S^*$-invariant}, which will turn out to be a key feature; see Section \ref{sec:spec} below.
\end{remark*}

\begin{proof}
The proof of Lemma \ref{lem:commutator} is elementary, yet we provide the details for the reader's convenience. Let $\Fb \in L^2_+(\T;E)$ be given. Using the identity $S^* T_\Ub S = T_\Ub$ (which is straightforward to check for any Toeplitz operator $T_\Ub$) in combination with identity \eqref{eq:Sdefect}, we find
$$
[S^*, T_{\Ub}] \Fb  =  S^* T_\Ub \Fb - T_\Ub S^* \Fb = S^*T_\Ub \Fb- S^* T_{\Ub} S S^* \Fb = S^*T_{\Ub}(\Mean(\Fb)).
$$
This shows the claimed identity. If $E=\C^{d \times d}$ and $\Ub \in L^\infty(\T; \C^{d \times d})$, we can write this identity as stated above, suing that $\Mean(\Fb) \in \C^{d \times d}$ is a constant matix.
\end{proof}

For later use, we also record the following key identity relating Toeplitz and Hankel operators:
\be \label{eq:ToeplitzHankel}
T_{\Fb \Gb} - T_{\Fb} T_{\Gb} = H^*_{\Fb^*} H_{\Gb}
\ee
valid for any $\Fb, \Gb \in L^\infty(\T; \Ls(E))$. The elementary proof of \eqref{eq:ToeplitzHankel} is left to the reader.

\subsection{Sobolev-type spaces and density of rational maps}
In what follows, we take $E= \C^{d \times d}$ with some fixed integer $d \geq 2$ equipped with standard (Frobenius) inner product denoted by $\langle A|B \rangle_E= \tr(A B^*)$ for $A,B \in \C^{d \times d}$.  For real $s \geq 0$, we define the Sobolev space for matrix-valued functions on $\T$ given by
$$
H^s := H^s(\T; \C^{d \times d}) := \{ \Fb \in L^2(\T; \C^{d \times d}) : \| \Fb\|_{H^s} < +\infty \} 
$$
with the norm given by $\| \Fb \|_{H^s}^2 := \sum_{n=-\infty}^{\infty} (1+|n|^{2s}) |\widehat{\Fb}_n|^2_F$, and we set $H^\infty := \bigcap_{s \geq 0} H^s(\T)$. We also define the family of Hardy--Sobolev spaces denoted by $H^s_+ := H^s \cap L^2_+$ for $s \geq 0$ together with the obvious definition of the space $H^\infty_+$.


For integers $d \geq 2$ and $0 \leq k \leq d$, we recall that the {\em complex Grassmannian} $\Gr_k(\C^d)$ are defined as the set of matrices such that
$$
\Gr_k(\C^d) := \left \{ U \in \C^{d \times d} : U = U^*, \; U^2 =\mathds{1}_d, \; \tr(U) = d-2k \right \},
$$
where $\mathds{1}_d$ denote the $d\times d$--identity matrix. We remind the reader that the trivial cases $\Gr_0(\C^d) = \{ \mathds{1}_d \}$ and $\Gr_d(\C^d) = \{ -\mathds{1}_d \}$ will be mostly excluded in our discussion, i.e., we assume that $1 \leq k \leq d-1$ holds. For real $s \geq 0$, we introduce the space
$$
H^{s}(\T; \Gr_k(\C^d)) := \left \{ \Ub \in H^{s}(\T; \C^{d \times d})) : \mbox{$\Ub(\theta) \in \Gr_k(\C^d)$ for a.e.~$\theta \in \T$} \right \}
$$
with the obvious definition $H^\infty(\T; \Gr_k(\C^d)) := \bigcap_{s \geq 0} H^s(\T; \Gr_k(\C^d))$. Furthermore, we recall that
$$
\mathcal{R}at(\T; \Gr_k(\C^d)) \subset H^\infty(\T; \Gr_k(\C^d))
$$ 
denotes the set of rational maps $\Ub : \T \to \Gr_k(\C^d)$, i.e., each matrix entry of $\Ub$ is a rational function of $z = \eu^{i \theta} \in \pt \D \cong \T$. Since $\Gr_k(\C^d)$ is a smooth compact manifold without boundary, the seminal work of Brezis--Nirenberg \cite{BrNi-95} tells us that the space of smooth maps $C^\infty(\T; \Gr_k(\C^d))$ is dense in $H^{1/2}(\T; \Gr_k(\C^d))$. In fact, it was shown in \cite{GeLe-25}[Theorem A.2], that we can restrict to rational maps to already obtain a dense subset. 

\begin{thm}[Density of rational maps] \label{thm:dense}
For any $d \geq 2$ and $0 \leq k \leq d$, the set $\mathcal{R}at(\T; \Gr_k(\C^d))$ is dense in $H^{1/2}(\T; \Gr_k(\C^d))$.
\end{thm}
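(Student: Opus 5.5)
By the result of Brezis--Nirenberg \cite{BrNi-95} recalled above, $C^\infty(\T; \Gr_k(\C^d))$ is dense in $H^{1/2}(\T; \Gr_k(\C^d))$. It therefore suffices to show that every smooth map $\Ub : \T \to \Gr_k(\C^d)$ is a $C^1(\T)$-limit of rational maps into $\Gr_k(\C^d)$, since $C^1$-convergence implies $H^{1/2}$-convergence. The trivial cases $k=0$ and $k=d$ are obvious, so assume $1 \leq k \leq d-1$.

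Write $\Ub = \mathds{1}_d - 2P$, where $P(\theta)$ is a smooth family of rank-$k$ orthogonal projections. The first step is to find a smooth \emph{periodic} orthonormal frame $Q : \T \to \C^{d \times k}$ with
$$
Q^*Q = \mathds{1}_k, \qquad P = QQ^*.
$$
To construct it, choose $Q_0$ with $Q_0Q_0^* = P(0)$ and $Q_0^* Q_0 = \mathds{1}_k$. Solve the parallel transport equation $\pt_\theta Q = [\pt_\theta P, P]\, Q$ on $[0,2\pi]$ with $Q(0)=Q_0$. Since $[\pt_\theta P, P]$ is skew-Hermitian and intertwines $P(\theta)$ along the flow, this preserves $Q^*Q = \mathds{1}_k$ and $QQ^* = P$.

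In general $Q$ need not be periodic. Instead, $Q(2\pi) = Q_0 W$ for some holonomy $W \in U(k)$. Write $W = \eu^{iH}$ with $H$ Hermitian and replace $Q(\theta)$ by $Q(\theta)\eu^{-i\theta H/2\pi}$. This gives a smooth $2\pi$-periodic frame. I expect this topological step to be the only genuine obstacle; it amounts to the triviality of complex vector bundles over the circle, which holds because $U(k)$ is connected.

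Next, approximate $Q$ in $C^1(\T)$ by a trigonometric polynomial $Q_\eps$, for instance a Fej\'er mean. For small $\eps$, the Gram matrix $Q_\eps^* Q_\eps$ is uniformly close to $\mathds{1}_k$ and hence invertible. Define
$$
P_\eps := Q_\eps (Q_\eps^* Q_\eps)^{-1} Q_\eps^*, \qquad \Ub_\eps := \mathds{1}_d - 2P_\eps .
$$
Then $P_\eps(\theta)$ is the orthogonal projection onto the range of $Q_\eps(\theta)$, so it is Hermitian, idempotent and of rank $k$. Consequently $\Ub_\eps$ takes values in $\Gr_k(\C^d)$. Moreover $P_\eps \to QQ^* = P$ in $C^1$, because all operations involved are smooth in $Q$ near the set of isometries.

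It remains to check rationality. On $\T$ we have $\ov{z} = z^{-1}$, so the entries of both $Q_\eps$ and $Q_\eps^*$ are Laurent polynomials in $z = \eu^{i\theta}$. By Cramer's rule, the entries of $(Q_\eps^*Q_\eps)^{-1}$ are rational functions of $z$, with denominator $\det(Q_\eps^* Q_\eps)$, which does not vanish on $\pt\D$. Hence $\Ub_\eps \in \mathcal{R}at(\T; \Gr_k(\C^d))$ and $\Ub_\eps \to \Ub$ in $C^1$, which completes the density argument.
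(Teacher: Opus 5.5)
Your proof is correct. The paper itself gives no argument for this statement: it combines the Brezis--Nirenberg density of $C^\infty(\T;\Gr_k(\C^d))$ with a citation of \cite[Theorem A.2]{GeLe-25} for the passage from smooth to rational maps.

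You make the same first reduction and then supply a self-contained proof of the second step, in three moves:
\begin{enumerate}
\item You lift $P=\frac12(\mathds{1}_d-\Ub)$ to a smooth periodic isometric frame $Q$, using parallel transport plus a holonomy correction. This is the triviality of vector bundles over $\T$.
\item You approximate $Q$ by a trigonometric polynomial $Q_\eps$.
\item You re-project via $Q_\eps(Q_\eps^*Q_\eps)^{-1}Q_\eps^*$. This is automatically a Hermitian rank-$k$ projection, and it is rational in $z$ because $\ov{z}=z^{-1}$ on $\T$ and $\det(Q_\eps^*Q_\eps)\neq 0$ on $\pt\D$.
\end{enumerate}
The paper's route buys brevity. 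Yours buys an explicit, elementary approximation scheme. It even converges in every $C^m$, since $Q$ is smooth and $Q\mapsto Q(Q^*Q)^{-1}Q^*$ is smooth near the isometries.

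One point you assert without justification is that the corrected frame is smooth across $\theta=0\equiv 2\pi$, not merely continuous. It is easily settled. Solve $\pt_\theta Q=[\pt_\theta P,P]\,Q$ on all of $\R$. Because the coefficient is $2\pi$-periodic, uniqueness gives $Q(\theta+2\pi)=Q(\theta)W$ for every $\theta$, not just at $\theta=0$. Hence $Q(\theta)\eu^{-i\theta H/2\pi}$ is a genuinely $2\pi$-periodic smooth map.
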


\section{Spectral analysis of $T_\Ub$}

\label{sec:spec}

In this section, we derive some fundamental spectral properties of the Toeplitz operator
$$
T_\Ub : L^2_+(\T, \Cd) \to L^2_+(\T; \Cd), \quad \Fb \mapsto T_\Ub(\Fb) = \Pi(\Ub \Fb),
$$ 
for a given initial datum 
$$
\Ub \in H^{1/2}(\T; \Gr_k(\C^d)).
$$
Here and throughout the following, we take $d \geq 2$ and $1 \leq k \leq d-1$ to be fixed integers. Since $\Ub^* = \Ub$ and $\Ub^2 = \mathds{1}_d$ holds a.e.~on $\T$ holds, we immediately see that $T_{\Ub}^* = T_{\Ub}$ is self-adjoint and that $T_\Ub$ is a contraction, i.e., we have $\| T_{\Ub} \| \leq \| \Ub \|_{L^\infty} = 1$.  In fact, we will see below that equality $\|T_\Ub \| = 1$ holds.

\subsection{Key spectral properties}

Our starting point is the following key identity for the Toeplitz operator $T_\Ub$, where we recall that 
$$
H_\Ub : L^2_+(\T, \Cd) \to L^2_-(\T, \Cd), \quad \Fb \mapsto H_\Ub(\Fb) = \Pi_-(\Ub \Fb)
$$
denotes the Hankel operator with the matrix-valued symbol $\Ub \in H^{1/2}(\T; \Gr_k(\C^d))$. As usual, we often use the shorthand notation $L^2_+ = L^2_+(\T; \Cd)$ in what follows.

\begin{lem}[Key identity] \label{lem:key_identity}
We have the identity
$$
T_{\Ub}^2 = \id - K_{\Ub} \quad \mbox{with $K_{\Ub} = H_{\Ub}^* H_{\Ub}$}.
$$
Here the operator $K_\Ub : L^2_+ \to L^2_+$ is self-adjoint and trace-class with
$$
\Tr (K_{\Ub}) = \Tr ( H_{\Ub}^* H_\Ub)= \frac{1}{2} \| \Ub \|_{\dot{H}^{1/2}}^2.
$$
\end{lem}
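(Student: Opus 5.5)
The identity $T_\Ub^2 = \id - K_\Ub$ follows from the Toeplitz--Hankel relation \eqref{eq:ToeplitzHankel} applied with $\Fb = \Gb = \Ub$. This gives
$$
T_{\Ub^2} - T_\Ub T_\Ub = H^*_{\Ub^*} H_\Ub .
$$
Since $\Ub(\theta) \in \Gr_k(\C^d)$ a.e., we have $\Ub^* = \Ub$ and $\Ub^2 = \mathds{1}_d$ a.e. Hence $T_{\Ub^2} = T_{\mathds{1}_d} = \id$ on $L^2_+$, and $H_{\Ub^*} = H_\Ub$. This yields $T_\Ub^2 = \id - H_\Ub^* H_\Ub$. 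Self-adjointness and nonnegativity of $K_\Ub = H_\Ub^* H_\Ub$ are immediate.

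It remains to show that $K_\Ub$ is trace class and to compute its trace. Since $K_\Ub = H_\Ub^* H_\Ub \geq 0$, it suffices to show that $H_\Ub$ is Hilbert--Schmidt and to compute
$$
\Tr(K_\Ub) = \| H_\Ub \|_{\mathfrak{S}_2}^2 = \sum_{j} \| H_\Ub \mathbf{e}_j \|_{L^2}^2
$$
over an orthonormal basis of $L^2_+(\T;\Cd)$. I would take the basis $\mathbf{e}_{m,ab} = \eu^{im\theta} E_{ab}$ with $m \geq 0$, where $E_{ab}$ are the matrix units, which are orthonormal for the Frobenius product. Then $H_\Ub(\eu^{im\theta}E_{ab}) = \Pi_-(\Ub \eu^{im\theta}) E_{ab}$. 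By Parseval its squared norm is $\sum_{n<-m} |\widehat{\Ub}_n E_{ab}|_E^2$. Summing over $a,b$ gives $\sum_{a,b} |A E_{ab}|_E^2 = d\,|A|_E^2$, since right multiplication by $E_{ab}$ picks out column $a$ placed in column $b$. Summing over $m \geq 0$ then gives
$$
\Tr(K_\Ub) = d \sum_{n<0} |n|\, |\widehat{\Ub}_n|_E^2 .
$$

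This is the delicate point: the factor $d$ and the one-sided sum must be reconciled with the normalization $\frac12 \|\Ub\|_{\dot H^{1/2}}^2$.

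The one-sided sum is handled by self-adjointness. Because $\Ub^* = \Ub$, we have $\widehat{\Ub}_{-n} = \widehat{\Ub}_n^*$ and hence $|\widehat{\Ub}_{-n}|_E = |\widehat{\Ub}_n|_E$. Therefore
$$
\sum_{n<0} |n|\,|\widehat{\Ub}_n|_E^2 = \frac12 \sum_{n\in\Z} |n|\,|\widehat{\Ub}_n|_E^2 = \frac12 \|\Ub\|_{\dot H^{1/2}}^2 .
$$

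The factor $d$ reflects that $L^2_+(\T;\Cd)$ is $d$ copies of $L^2_+(\T;\C^d)$, on each of which $\Ub$ acts by left multiplication. Consequently $\Tr$ on the matrix-valued space equals $d$ times the trace on the column space $L^2_+(\T;\C^d)$. I expect the intended statement to be read with this normalization: either $\Tr$ is taken column-wise, or the paper's norm convention for $|\widehat{\Ub}_n|$ absorbs the factor. I would check this against the energy identity $E = \frac12 \int_\T \Tr(\Ub |D|\Ub)$ with $\frac1{2\pi}$-normalized measure, and state the identity accordingly.

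The Hilbert--Schmidt property then holds precisely because $\Ub \in H^{1/2}$, consistent with the first bullet in Section \ref{sec:prelim}.
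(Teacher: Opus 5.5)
Your argument follows the paper's own route. You apply \eqref{eq:ToeplitzHankel} with $\Ub^*=\Ub$ and $\Ub^2=\mathds{1}_d$, compute the Hilbert--Schmidt norm of $H_\Ub$ in a basis $\eu^{in\theta}\mathbf{E}_l$, and use $\widehat{\Ub}_{-k}=\widehat{\Ub}_k^*$ to pass from the one-sided sum to $\frac12\|\Ub\|_{\dot H^{1/2}}^2$. Every computation you write is correct, and your factor $d$ is genuine. The paper's proof loses it when it replaces $\sum_{l=1}^{d^2}|\widehat{\Ub}_k\mathbf{E}_l|_E^2$ by $|\widehat{\Ub}_k|_E^2$. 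That sum is the squared Hilbert--Schmidt norm of $X\mapsto\widehat{\Ub}_kX$ on $\Cd$, which equals $d\,|\widehat{\Ub}_k|_E^2$ for any orthonormal basis $(\mathbf{E}_l)$.

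So do not end with a hedge. The second reconciliation you suggest, that the norm convention absorbs the factor, is not available. The paper fixes the Frobenius norm, and with the normalized measure one has $\frac{1}{2\pi}\int_\T\Tr(\Ub\,|D|\Ub)\,d\theta=\sum_n|n|\,|\widehat{\Ub}_n|_E^2$. The energy is therefore built on the same norm, and nothing absorbs $d$. What is true for $K_\Ub$ acting on $L^2_+(\T;\Cd)$, as in the statement, is
$$
\Tr(K_\Ub)=\frac d2\,\|\Ub\|_{\dot H^{1/2}}^2=d\,E[\Ub].
$$
The constant $\frac12$ is correct only for the corresponding operator on the column space $L^2_+(\T;\C^d)$.

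A quick example confirms this. Take $\Ub(\theta)=\begin{pmatrix}0&\eu^{-i\theta}\\ \eu^{i\theta}&0\end{pmatrix}\in\Gr_1(\C^2)$, and use the matrix units $E_{ab}$ from your proposal. Then $H_\Ub\Fb=\eu^{-i\theta}E_{12}\Mean(\Fb)$ and $K_\Ub\Fb=E_{22}\Mean(\Fb)$, which is a rank-two orthogonal projection with trace $2$. On the other hand, the only nonzero Fourier coefficients are $\widehat{\Ub}_{\pm1}\in\{E_{21},E_{12}\}$, so $\frac12\|\Ub\|_{\dot H^{1/2}}^2=1$.

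State and prove the identity with the factor $d$, or restrict to $L^2_+(\T;\C^d)$. Nothing later in the paper is affected. Energy conservation in Lemma \ref{lem:energy_conservation} and Lemma \ref{lem:ap_convergence} only uses $\Tr(K_{\Ub(t)})=\Tr(K_{\Ub_0})$ and the fact that $\Tr(K_\Ub)$ is a fixed positive multiple of $E[\Ub]$.
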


\begin{remark*}
From the identity $T_{\Ub}^2 = \id - K_\Ub$ and the compactness of $K_\Ub$, we see that $T_\Ub$ is Fredholm with $\mathrm{ind} (T_\Ub) = 0$ by the self-adjointness of $T_\Ub$. Furthermore, we see that $0 \in \sigma(K_\Ub)$ by compactness of $K_\Ub$, whence it follows that equality $\| T_\Ub \| = 1$ holds.
\end{remark*}

\begin{proof}
 The claimed identity follows directly from \eqref{eq:ToeplitzHankel} using that $\Ub^2 = \mathds{1}_d$ and $\Ub= \Ub^*$ almost everywhere on $\T$. 
  
 To show that the self-adjoint operator $K_\Ub=H^*_\Ub H_\Ub$ is trace-class, we define $\mathbf{E}_{n,l} = \eu^{i n \theta} \mathbf{E}_l$ with $n \geq 0$, where the matrices $\mathbf{E}_l$  with $l=1, \ldots, d^2$ form an orthonormal basis of $\C^{d \times d}$. Thus the family $(\mathbf{E}_{n,l})_{n \geq 0, 1 \leq l \leq d^2}$ forms an orthonormal basis of $L^2_+(\T; \C^{d \times d})$. We calculate 
\begin{align*}
\Tr(H_{\Ub}^* H_\Ub ) & = \sum_{n=0}^\infty \sum_{l=1}^{d^2} \langle H_\Ub \mathbf{E}_{n,l}| H_\Ub \mathbf{E}_{n,l}  \rangle = \sum_{n=0}^\infty \sum_{l=1}^{d^2} \langle \Pi_{-}(\Ub \mathbf{E}_{n,l}) | \Pi_-(\Ub \mathbf{E}_{n,l}) \rangle \\
& = \sum_{n=0}^\infty \sum_{l=1}^{d^2} \left \langle \sum_{k+n <0} \widehat{\Ub}_k \eu^{i (k+n)\theta} \mathbf{E}_l | \sum_{m + n <0} \widehat{\Ub}_m \eu^{i (m+n) \theta} \mathbf{E}_l \right \rangle \\
& = \sum_{n=0}^\infty \sum_{n+k <0} |\widehat{\Ub}_k|_E^2 = \sum_{k<0}^\infty |k| |\widehat{\Ub}_k|_E^2 =  \| \Pi_- \Ub \|_{\dot{H}^{1/2}}^2 = \frac{1}{2} \| \Ub \|_{\dot{H}^{1/2}}^2.
\end{align*}
In the last step, we used that $\Ub = \Ub^*$ a.e.~on $\T$, which is easily seen to imply that $(\widehat{\Ub}_{k})^* = \widehat{\Ub}_{-k}$ for $k \in \Z$ and thus $\| \Pi_- \Ub \|_{\dot{H}^{1/2}}^2 = \| \Pi_+ \Ub \|_{\dot{H}^{1/2}}^2 = \frac{1}{2} \| \Ub \|_{\dot{H}^{1/2}}^2$. 
\end{proof}

Let $\sigma(T_\Ub)$ denote the spectrum of $T_\Ub$. We recall that the {\em discrete spectrum} $\sd(T_\Ub)$ and {\em essential spectrum} $\se(T_\Ub)$ are given by
$$
\sd(T_{\Ub}) = \{ \mu \in \sigma(T_\Ub) : \mbox{$\mu$ is an isolated eigenvalue with finite multiplicity} \},
$$
$$
\se(T_\Ub) = \{ \mu \in \sigma(T_\Ub) : \mbox{$T_{\Ub}-\mu \id$ is not Fredholm} \}.
$$
Note that we do not need to distinguish here between geometric and algebraic multiplicity because $T_\Ub=T_{\Ub}^*$ is self-adjoint. Also, by self-adjointness of $T_\Ub$, we have that
$\sd(T_\Ub) = \sigma(T_\Ub) \setminus \se(T_\Ub)$.

\begin{lem} \label{lem:Toeplitz_spectrum}
We have the following properties.
\begin{enumerate}
\item[(i)] The discrete spectrum of $T_\Ub$ is given by 
$$
\sd(T_{\Ub}) = \{ \mu \in \R : \mbox{$\lambda = 1-\mu^2 > 0$ is an eigenvalue of $K_\Ub$} \} 
$$ 
and $\sd(T_\Ub) \subset (-1,+1)$ is at most countable.

\item[(ii)] The essential spectrum of $T_\Ub$ is given by
$$
\se(T_\Ub) = \{ \pm 1 \}.
$$

\item[(iii)] We have the spectral representation
$$
T_{\Ub} = \sum_{\mu \in \sigma(T_\Ub)} \mu P_\mu,
$$
where $P_\mu$ denotes the orthogonal projection onto $\ker (T_\Ub - \mu \id)$. As a consequence, the spectrum of $T_{\Ub}$ is pure point.
\end{enumerate}
\end{lem}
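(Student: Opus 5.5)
The plan rests on the key identity $T_\Ub^2=\id-K_\Ub$ of Lemma \ref{lem:key_identity}. It shows that $T_\Ub$ commutes with the compact, self-adjoint, trace-class operator $K_\Ub$. I will decompose $L^2_+$ into eigenspaces of $K_\Ub$ and study $T_\Ub$ on each one.

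\emph{Decomposition.} By the spectral theorem for compact self-adjoint operators, $L^2_+=\ker K_\Ub\oplus\bigoplus_{\lambda>0}E_\lambda$. Here $E_\lambda=\ker(K_\Ub-\lambda)$ is finite-dimensional, the positive eigenvalues $\lambda$ are at most countable, and they accumulate only at $0$. Since $[T_\Ub,K_\Ub]=0$, each summand is $T_\Ub$-invariant.
\begin{itemize}
\item On $E_\lambda$, the restriction of $T_\Ub$ is a self-adjoint operator on a finite-dimensional space with $T_\Ub^2=(1-\lambda)\id$. Hence $E_\lambda=\ker(T_\Ub-\mu)\oplus\ker(T_\Ub+\mu)$ restricted to $E_\lambda$, where $\mu=\sqrt{1-\lambda}\in[0,1)$.
\item On $\ker K_\Ub$ we have $T_\Ub^2=\id$, so $\ker K_\Ub=\ker(T_\Ub-1)\oplus\ker(T_\Ub+1)$.
\end{itemize}
This already gives (iii): an orthonormal eigenbasis of $T_\Ub$ exists, so $T_\Ub=\sum\mu P_\mu$ and the spectrum is pure point.

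\emph{Part (ii).} An eigenvalue $\pm\mu$ with $|\mu|<1$ has eigenspace inside $E_{1-\mu^2}$, so it has finite multiplicity. Such eigenvalues can accumulate only where $\lambda\to0$, that is, at $\pm1$. Hence every point of $\sigma(T_\Ub)\cap(-1,1)$ is discrete.

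For $\pm1\in\se(T_\Ub)$, I argue via Fredholmness. The operator $T_\Ub-1$ is Fredholm if and only if $\ker(T_\Ub-1)$ is finite-dimensional and $1$ is not an accumulation point of the spectrum. Since $\ker K_\Ub$ is infinite-dimensional ($K_\Ub$ is trace class on an infinite-dimensional space), at least one of $\ker(T_\Ub\mp1)$ is infinite-dimensional, or else eigenvalues accumulate at $\pm1$.

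To get both signs, I use that $T_\Ub$ is Fredholm of index $0$ and that $(\id+T_\Ub)/2=T_P$ with $P=\frac12(\mathds{1}+\Ub)$ a projection of rank $d-k\ge1$. Then $\ker(T_\Ub+1)=\ker T_P$ contains $\Pi$ of functions valued in $\ran(\mathds{1}-P)$ whose product with $P$ has only negative frequencies. The cleanest route is to test with $S^n\Fb$: by Lemma \ref{lem:commutator}, $\langle T_\Ub S^n\Fb\,|\,S^n\Fb\rangle\to\langle \Ub\Fb|\Fb\rangle_{L^2}$. Choosing $\Fb$ valued in the $\pm1$ eigenbundles of $\Ub$ (possible since $1\le k\le d-1$) gives Weyl sequences at both $\pm1$.

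\emph{Part (i).} The direction ``$\mu\in\sd\Rightarrow 1-\mu^2$ is an eigenvalue of $K_\Ub$'' is immediate from $K_\Ub=\id-T_\Ub^2$. For the converse, let $\lambda>0$ be an eigenvalue of $K_\Ub$ and $\mu=\sqrt{1-\lambda}$. The decomposition above shows that at least one of $\pm\mu$ is an eigenvalue; the set in (i) requires both.

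I expect this symmetry to be the main obstacle. My plan is as follows. Take $T_\Ub\Fb=\mu\Fb$ with $\Fb\in E_\lambda$ and set $\Gb=H_\Ub\Fb\neq0$. From $\Ub(\mu\Fb+\Gb)=\Fb$ one gets $\Pi_-(\Ub\Gb)=-\mu\Gb$, so $\Gb$ is an eigenvector of the compression of $\Ub$ to $L^2_-$ with eigenvalue $-\mu$. I will then transport $\Gb$ back to $L^2_+$ by the anti-unitary map $\Gb\mapsto \eu^{-i\theta}\Gb^*$ (or a transpose variant). I will check whether this intertwines the left compression with $T_\Ub$, possibly after passing to the unitarily equivalent right-multiplication Toeplitz operator, using $\Ub=\Ub^*$. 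If this check fails, I will fall back to proving (i) with $\mu$ ranging over $\pm\sqrt{1-\lambda}$ as realized on $E_\lambda$. Countability and the inclusion $\sd(T_\Ub)\subset(-1,1)$ follow from (ii) and the countability of the positive eigenvalues of $K_\Ub$.
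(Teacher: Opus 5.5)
Your worry about (i) is justified, but it undercuts your main plan: the symmetry $\mu\leftrightarrow-\mu$ is false, so the anti-unitary transport cannot succeed, and (i) as printed cannot be proved. Take the paper's own traveling-wave profile with $U=\mathds{1}_2$ and $B(z)=z$, namely $\Ub=\left(\begin{smallmatrix} v & c\,\eu^{-i\theta}\\ c\,\eu^{i\theta} & -v\end{smallmatrix}\right)$ with $c=\sqrt{1-v^2}$ and $0<|v|<1$.

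\begin{itemize}
\item On a column $(f_1,f_2)$ of $\Fb$ one gets $T_\Ub(f_1,f_2)=(vf_1+cS^*f_2,\ cSf_1-vf_2)$ and $T_\Ub^2(f_1,f_2)=(f_1,\ f_2-c^2\Mean(f_2))$.
\item Hence $1-v^2$ is the only positive eigenvalue of $K_\Ub$, and its eigenspace consists of the constant matrices with vanishing first row.
\item On that eigenspace $T_\Ub=-v$, so $\sigma(T_\Ub)=\{-1,-v,1\}$. Thus $+v$ lies in the right-hand set of (i) but not in $\sd(T_\Ub)$.
\end{itemize}

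Your computation $\Pi_-(\Ub\Gb)=-\mu\Gb$ for $\Gb=H_\Ub\Fb$ is correct. However, $\eu^{-i\theta}\Gb^*$ is then a $(-\mu)$-eigenvector of the right Toeplitz operator $\Fb\mapsto\Pi(\Fb\Ub)$. Transposition conjugates that operator to $T_{\Ub^\top}$, not to $T_\Ub$, and in the example $T_{\Ub^\top}$ has eigenvalue $+v$ instead of $-v$.

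So go with your fallback. It states that $\sd(T_\Ub)=\sp(T_\Ub)\cap(-1,1)$, that every such $\mu$ has $1-\mu^2\in\sp(K_\Ub)\setminus\{0\}$, and that every positive eigenvalue $\lambda$ of $K_\Ub$ produces at least one of $\pm\sqrt{1-\lambda}$. This is exactly what $T_\Ub^2=\id-K_\Ub$ yields, and it is all the paper's one-line proof of (i) actually gives.

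Your decomposition along the eigenspaces of $K_\Ub$ is correct. It is a more hands-on route to (iii) and to $\se(T_\Ub)\subseteq\{\pm1\}$ than the paper's appeal to the spectral theorem. For $\{\pm1\}\subseteq\se(T_\Ub)$, the paper cites the Fredholm criterion for matrix Toeplitz operators ($T_\Gb$ is Fredholm only if $\Gb^{-1}\in L^\infty$) with $\Gb=\Ub\pm\mathds{1}_d$. Your Weyl-sequence alternative is elementary, but it needs repair as written.

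\begin{itemize}
\item A holomorphic $\Fb$ with $\Ub\Fb=\pm\Fb$ would already be an eigenvector lying in $\ker H_\Ub$, and $\ker H_\Ub$ can be trivial.
\item Example: for $\Ub=\left(\begin{smallmatrix}0&\eu^{-i\cos\theta}\\ \eu^{i\cos\theta}&0\end{smallmatrix}\right)$, the condition $\eu^{\pm i\cos\theta}f\in L^2_+$ with $f\in L^2_+$ forces $f=0$, because $\eu^{\pm i/(2z)}$ has an essential singularity at $0$.
\item The same example refutes your claim that $\ker K_\Ub$ must be infinite-dimensional because $K_\Ub$ is trace class.
\end{itemize}

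Instead, take the non-holomorphic $\Gb=\frac12(\mathds{1}_d\pm\Ub)\in L^2$, which is nonzero since $1\le k\le d-1$, and set $\Fb_n=\Pi(\eu^{in\theta}\Gb)$. Then $\Fb_n\weakto 0$ and $\|\Fb_n\|\to\|\Gb\|$. Moreover $\|(T_\Ub\mp\id)\Fb_n\|=\|\Pi(\Ub\,\Pi_-(\eu^{in\theta}\Gb))\|\to 0$, which gives a singular Weyl sequence at $\pm1$.
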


\begin{proof}
Item (i) directly follows from the identity $T_\Ub^2 = \id - K_\Ub$ in Lemma \ref{lem:key_identity} for the self-adjoint operator $T_\Ub$ and by the compactness of the operator $K_\Ub$.

As for (ii), we note that $\se(T_\Ub) \subseteq \{ \pm 1 \}$ holds, since $K_\Ub= \id - T_\Ub^2$ is compact. To show the equality $\se(T_\Ub) = \{ \pm 1 \}$, we argue as follows. Since $1 \leq k \leq d-1$, we note that $-1$ and $+1$ are both eigenvalues of the matrix $\Ub(\theta) \in \Gr_k(\C^d)$ for a.e.~$\T$. Hence the matrix-valued map $\Gb_\sigma = \Ub + \mu \mathds{1}_d \in L^\infty(\T; \C^{d \times d})$ is not invertible in $L^\infty(\T; \C^{d \times d})$ for $\mu \in \{ \pm 1 \}$. From \cite{Pe-03}[Theorem 4.3, Chapter 3], we recall that $T_\Gb$ with $\Gb \in L^\infty(\T; \C^{d \times d})$ is Fredholm if and only if $\Gb^{-1} \in L^\infty(\T; \C^{d \times d})$. Hence we conclude that $T_{\Gb} = T_{\Ub + \mu \mathds{1}_d} = T_\Ub + \mu  \id$ is not Fredholm for $\mu \in \{ \pm 1 \}$. This shows that $\{ \pm 1 \} \subseteq \se(T_\Ub)$, which completes the proof of $\se(T_\Ub) = \{ \pm 1 \}$.

Finally, we remark that (iii) follows from the spectral theorem applied to the bounded self-adjoint operator $T_\Ub$.
\end{proof}

\subsection{On invariant subspaces}
As before, we assume that $\Ub \in H^{1/2}(\T; \Gr_k(\C^d))$ in what follows. By the commutator formula in Lemma \ref{lem:commutator}, we deduce that any eigenspace $\Es_\mu(T_\Ub)=\ker (T_\Ub - \mu \id)$ must be {\em nearly $S^*$-invariant}; see \cite{Sa-88}. That is, we have the implication:
$$
\mbox{$\Fb \in \Es_\mu(T_\Ub)$ and $\Mean(\Fb) = 0$} \quad \Rightarrow \quad S^* \Fb \in \Es_\mu(T_\Ub).
$$
This simple observation will turn out to be valuable in our analysis further below. 

Moreover, we can naturally identify a closed subspace in $L^2_+$ associated to $T_\Ub$ which is also $S^*$-invariant. Indeed, let us define the closed subspace
$$
\Hs := \ov{\ran(K_\Ub)} = \ov{\ran(H^*_\Ub H_\Ub)} = \ov{\ran(H^*_\Ub)}.
$$
This induces the orthogonal decomposition
$$
L^2_+ = \Hs \oplus \Hs^\perp \quad \mbox{with} \quad \mbox{$\Hs = \ov{\ran(H^*_\Ub)}$ and $\Hs^\perp = \ker(H_\Ub)$}.
$$
By a well-known result  (see, e.g.~\cite{Pe-03}), the kernel of a Hankel operator is invariant under the forward shift $S$. Therefore we have  $S (\Hs^\perp) \subset \Hs^\perp$, which implies that $S^*(\Hs) \subset \Hs$ by taking the adjoint. On the other hand, since $\Hs$ is the closed linear span of the eigenfunctions of $K_\Ub$ with positive eigenvalues $\lambda >0$, we easily infer that $T_\Ub(\Hs) \subset \Hs$ from Lemma \ref{lem:key_identity} above. Hence we obtain the following fact.

\begin{lem}
The closed subspace $\Hs = \ov{\ran(K_\Ub)}$ is invariant under $T_\Ub$ and $S^*$, i.e., we have $T_{\Ub}(\Hs) \subset \Hs$ and $S^*(\Hs) \subset \Hs$.
\end{lem}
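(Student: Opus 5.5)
The plan is to prove the two invariances separately, using three facts already available: the orthogonal decomposition $L^2_+ = \Hs \oplus \Hs^\perp$ with $\Hs^\perp = \ker(H_\Ub)$, the key identity $T_\Ub^2 = \id - K_\Ub$ of Lemma \ref{lem:key_identity}, and the $S$-invariance of kernels of Hankel operators.

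\emph{Invariance under $S^*$.} First we check that $\ker(H_\Ub)$ is $S$-invariant. Take $\Fb \in \ker H_\Ub$, so that $\Ub\Fb \in L^2_+$. Then $\Ub S\Fb = e^{i\theta}\Ub\Fb$. Multiplication by $e^{i\theta}$ maps $L^2_+$ into itself, hence $\Pi_-(\Ub S\Fb) = 0$, i.e.\ $S\Fb \in \ker H_\Ub$. This gives $S(\Hs^\perp) \subset \Hs^\perp$. Now let $\Gb \in \Hs$ and $\Fb \in \Hs^\perp$. Then
$$
\langle S^*\Gb | \Fb \rangle = \langle \Gb | S\Fb \rangle = 0,
$$
so $S^*\Gb \in (\Hs^\perp)^\perp = \Hs$, using that $\Hs$ is closed.

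\emph{Invariance under $T_\Ub$.} By Lemma \ref{lem:key_identity}, $T_\Ub$ commutes with $T_\Ub^2$ and therefore with $K_\Ub = \id - T_\Ub^2$. It follows that $T_\Ub K_\Ub \Fb = K_\Ub T_\Ub \Fb \in \ran(K_\Ub)$ for every $\Fb \in L^2_+$, so $T_\Ub$ maps $\ran(K_\Ub)$ into itself. Since $T_\Ub$ is bounded, it also maps the closure $\Hs$ into itself.

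An equivalent route goes through the spectral theorem. $\Hs$ is spanned by the eigenspaces of $K_\Ub$ for eigenvalues $\lambda > 0$, and each such eigenspace is $T_\Ub$-invariant by commutation.

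No step here is a real obstacle. The only point needing care is the identification $\ov{\ran(K_\Ub)} = \ov{\ran(H_\Ub^*)} = (\ker H_\Ub)^\perp$. This holds because $\ker(H_\Ub^*H_\Ub) = \ker H_\Ub$: indeed $\langle H_\Ub^*H_\Ub\Fb|\Fb\rangle = \|H_\Ub\Fb\|^2$. Combined with the self-adjointness of $K_\Ub$, this gives $\ov{\ran K_\Ub} = (\ker K_\Ub)^\perp$.
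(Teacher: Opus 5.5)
Your proof is correct and follows the paper's argument. You obtain $S^*(\Hs)\subset\Hs$ by duality from the $S$-invariance of $\Hs^\perp=\ker H_\Ub$, which you verify directly where the paper cites Peller. You obtain $T_\Ub(\Hs)\subset\Hs$ because $T_\Ub$ commutes with $K_\Ub=\id-T_\Ub^2$, applied on $\ran K_\Ub$ and then passed to the closure, whereas the paper phrases this through the eigenfunctions of $K_\Ub$ with positive eigenvalues, the alternative you also mention.
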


\begin{remark*}
If $\Hs \subsetneq L^2_+$ is a proper subspace, which means that $\ker(H_\Ub) \neq \{ 0 \}$ is nontrivial, then by well-known {\em Beurling--Lax--Halmos theorem} it follows that $\Hs$ is a so-called {\em model space}; see e.g.~\cite{Ni-86}[Corollary I.6]. That is, we can write
$$
\Hs = (\Theta L^2_+(\T; V))^\perp
$$
with some subspace $V \subseteq \C^{d \times d}$ and some $\Theta \in L^\infty_+(\T; \Ls(V; \C^{d \times d}))$ such that 
$$
\Theta(\theta)^* \Theta(\theta) = \id_{V} \quad \mbox{for a.e.~$\theta \in \T$}.
$$ 
We say that $\Theta$ is a {\em left inner function}. In the case of equality $V = \C^{d \times d}$, we say that $\Theta$ is a {\em two-sided inner function}. However, we will not make further use of the this model space point of view in our analysis of \eqref{eq:HWM}.
\end{remark*}

With the help of the celebrated Kronecker theorem for Hankel operators, we can now easily identify the case when $\Hs$ happens to finite-dimensional. 

\begin{lem}[\`a la Kronecker] \label{lem:kronecker}
$\Ub \in \mathcal{R}at(\T; \Gr_k(\C^d))$ if and only if $\dim \Hs < \infty$.
\end{lem}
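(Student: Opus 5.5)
The plan is to reduce the statement to the matrix-valued Kronecker theorem for the Hankel operator $H_\Ub$. Since $\Hs = \ov{\ran(H_\Ub^*)}$ and $\ran(H_\Ub^*)$ is the orthogonal complement of $\ker(H_\Ub)$ inside $L^2_+$, we have $\dim \Hs = \rank(H_\Ub^*) = \rank(H_\Ub)$. More precisely, if $\rank H_\Ub < \infty$ then $\ran(H_\Ub^*)$ is finite-dimensional and hence closed. Conversely, if $\dim \Hs < \infty$, then $H_\Ub$ restricted to $\Hs$ has finite rank and vanishes on $\Hs^\perp = \ker(H_\Ub)$. So the lemma is equivalent to the statement that $H_\Ub$ has finite rank if and only if $\Ub$ is rational.

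By the recalled fact in Section \ref{sec:prelim}, $H_\Ub$ has finite rank if and only if $\Pi_-\Ub$ is a rational function of $\eu^{i\theta}$ without poles on $\pt\D$. This is Kronecker's theorem in the matrix-valued setting. If needed, one can justify it entrywise: $H_\Ub$ acts by left multiplication on $\C^{d\times d}$-valued functions, so its rank is finite exactly when each scalar Hankel operator with symbol $\Ub_{jl}$ has finite rank, which is the classical scalar Kronecker theorem.

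It remains to pass between $\Pi_-\Ub$ and $\Ub$.

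\begin{itemize}
\item If $\Ub\in\mathcal{R}at(\T;\Gr_k(\C^d))$, then $\Ub$ is smooth on $\T$, so its entries are rational in $z$ with no poles on $\pt\D$. Partial fractions then show that $\Pi_-\Ub$, the part with poles in $\D$ (including at $0$) and vanishing at infinity, is rational without poles on $\pt\D$.
\item Conversely, suppose $\Pi_-\Ub$ is rational. Self-adjointness $\Ub=\Ub^*$ gives $\widehat{\Ub}_{-k}=(\widehat{\Ub}_k)^*$, so $\Pi\Ub - \Mean(\Ub) = (\Pi_-\Ub)^*$ on $\T$. On $\pt\D$ the function $\ov{z}$ equals $1/z$, so the adjoint of a rational function of $z$ restricted to the circle is again rational in $z$. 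Hence $\Ub = \Pi_-\Ub + (\Pi_-\Ub)^* + \Mean(\Ub)$ is rational, with values in $\Gr_k(\C^d)$ a.e., and by continuity everywhere on $\T$.
\end{itemize}

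The only real obstacle is this rational-function bookkeeping: checking that $(\Pi_-\Ub)^*$ stays rational and has no poles on the circle, and identifying $\dim\Hs$ with $\rank H_\Ub$. Both are routine, so the main content is just invoking Kronecker's theorem.
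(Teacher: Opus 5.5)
Your proposal is correct and follows essentially the same route as the paper. You identify $\dim\Hs<\infty$ with finite rank of the Hankel operator (via $\Hs=\ov{\ran(H_\Ub^*)}$), invoke Kronecker's theorem for $\Pi_-\Ub$, and use $\Ub=\Ub^*$ to write $\Pi\Ub=(\Pi_-\Ub)^*+\Mean(\Ub)$, so that rationality of $\Pi_-\Ub$ is equivalent to rationality of $\Ub$; the only difference is that you spell out the routine bookkeeping the paper leaves implicit (rank identification, entrywise reduction, partial fractions, and values in $\Gr_k(\C^d)$ everywhere by continuity).
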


\begin{remark*}
In view of Lemma \ref{lem:Toeplitz_spectrum}, we notice that $\dim \Hs$ is finite-dimensional if and only if the discrete spectrum $\sd(T_\Ub)$ is finite. 
\end{remark*}

\begin{proof}
By Kronecker's theorem (see, e,\,g., \cite{Pe-03}[Chapter 2, Theorem 5.3], the  Hankel operator $H^*_{\Vb} : L^2_- \to L^2_+$ with $\Vb \in L^\infty(\T; \Cd))$ has finite rank if and only if its co-analytic part $\Pi_- \Vb(z) = \sum_{n<0} \widehat{\Vb}_n z^{-n}$ is a rational function in $z \in \D$.  On the other hand, since $\Ub = \Ub^*$ almost everywhere on $\T$, we deduce that 
$$
\Pi \Ub = (\Pi_-(\Ub^*)) ^*+ \Mean(\Ub) = (\Pi_- \Ub)^* + \Mean(\Ub)
$$ 
with the constant function $\Mean(\Ub) = \widehat{\Ub}_0$. This shows that $\Pi_+(\Ub)$ is rational  if and only if $\Pi_-(\Ub)$ is rational. Therefore, we have demonstrated that $\Ub = \Pi(\Ub)+ \Pi_-(\Ub)$ is rational if and only if $\Hs$ is finite-dimensional.
\end{proof}

\section{Lax pair structure}

\label{sec:lax}

We now elaborate on the Lax pair structure for \eqref{eq:HWM} posed on the torus $\T$. Informed by our previous work \cite{GeLe-25} for the half-wave maps equation posed on the real line, we introduce the following unbounded operator acting on $L^2_+$ with
\be
B_\Ub = -\frac{i}{2} ( T_\Ub  D + D  T_\Ub) + \frac{i}{2} T_{|D| \Ub}
\ee
where $D=-i \pt_\theta$ and $\Ub \in H^{s}(\T; \Gr_k(\C^d))$ with some $s> \frac{3}{2}$ is given. From the discussion in Appendix \ref{app:lwp}, we deduce that $B_\Ub$ with operator domain $\dom( B_\Ub) = H^1_+$ is {\em essentially skew-adjoint}.

For the rest of this subsection, we will always make the following assumptions without further reference:
\begin{itemize}
\item $I \subset \R$ is an interval with $0 \in I$.
\item $\Ub \in C(I; H^s(\T; \Gr_k(\C^d)))$ with some $s > \frac{3}{2}$ is a solution \eqref{eq:HWM}.
\end{itemize}
We refer the reader to Appendix \ref{app:lwp} below, where we prove local well-posedness of \eqref{eq:HWM} in $H^s$ with $s > 3/2$. We define the operators $T_{\Ub(t)}$ and $B_{\Ub(t)}$ for $t \in I$ in an obvious manner. Also, from the discussion in Appendix \ref{app:lwp}, we infer that there exists a unique solution $\Us(t) \in \Ls(L^2_+)$ for $t \in I$ solving the abstract operator-valued evolution equation
\be \label{eq:Us_ode}
\frac{d}{dt} \Us(t) = B_{\Ub(t)} \Us(t) \quad \mbox{for $t \in I$}, \quad \Us(0) = \id.
\ee
Since $B_{\Ub(t)} : H^1_+ \subset L^2_+ \to L^2_+$ is essentially skew-adjoint, the solution $\Us(t)$ is a unitary map on $L^2_+$ for all $t \in I$. Moreover, we have $\Us(t) : H^1_+ \to  H^1_+$ for $t \in I$ and $\Us(\cdot) \Fb \in C^1(I; L^2_+)$ for any given $\Fb \in H^1_+$. By the essential skew-adjointness of $B_{\Ub(t)}$, it also suffices consider elements in $H^1_+$ in the calculations below. In what follows, we will make use of these facts without further reference. 

We are now ready to state the following Lax pair property for sufficiently regular solutions of (HWM).

\begin{lem}[Lax pair structure] \label{lem:Lax}
We have the Lax evolution
$$
\frac{d}{dt} T_{\Ub(t)} = \left [B_{\Ub(t)}, T_{\Ub(t)} \right ] \quad \mbox{for} \quad  t \in I.
$$
As a consequence, we have the unitary equivalence
$$
T_{\Ub(t)} = \Us(t) T_{\Ub(0)} \Us(t)^* \quad \mbox{for} \quad t \in I.
$$
\end{lem}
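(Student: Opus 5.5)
We compute $\frac{d}{dt}T_{\Ub(t)}$ directly and compare it with the commutator. Since $\Ub \in C^1(I; H^{s-1})$ with $s-1>\frac12$, the map $t\mapsto T_{\Ub(t)}$ is differentiable in operator norm with $\frac{d}{dt}T_{\Ub(t)} = T_{\pt_t \Ub} = -\frac{i}{2} T_{[\Ub, |D|\Ub]}$. It therefore suffices to establish the operator identity
$$
[B_\Ub, T_\Ub] \Fb = -\frac{i}{2} T_{\Ub (|D|\Ub) - (|D|\Ub)\Ub} \Fb \quad \mbox{for } \Fb \in H^1_+ ,
$$
at each fixed time.

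To verify it, expand the commutator using $T_\Ub^2 = \id - K_\Ub$ from Lemma~\ref{lem:key_identity}. One gets
$$
[T_\Ub D + D T_\Ub, T_\Ub] = T_\Ub D T_\Ub + D T_\Ub^2 - T_\Ub^2 D - T_\Ub D T_\Ub = [D, T_\Ub^2] = -[D, K_\Ub].
$$
Hence
$$
[B_\Ub, T_\Ub] = \frac{i}{2}[D, H_\Ub^* H_\Ub] + \frac{i}{2}[T_{|D|\Ub}, T_\Ub].
$$
For the first term, use $D\Pi = \Pi D$ and $D\Pi_- = \Pi_- D$ together with the Leibniz rule $[D, M_\Ub] = M_{D\Ub}$ for the multiplication operator $M_\Ub$. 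This gives $[D, H_\Ub] = H_{D\Ub}$ and $[D, H_\Ub^*] = H^*_{D\Ub}$ (here $D\Ub$ is again self-adjoint up to sign conventions, and one should track $(D\Ub)^* = -D\Ub^*$ carefully). Consequently
$$
[D, K_\Ub] = H_\Ub^* H_{D\Ub} - H^*_{D\Ub} H_\Ub
$$
with the appropriate signs. For the second term, apply \eqref{eq:ToeplitzHankel} twice:
$$
[T_{|D|\Ub}, T_\Ub] = T_{(|D|\Ub)\Ub} - T_{\Ub |D|\Ub} - H^*_{|D|\Ub} H_\Ub + H^*_\Ub H_{|D|\Ub}.
$$
The Toeplitz part already yields $-\frac{i}{2}T_{[\Ub,|D|\Ub]}$, so the proof reduces to showing that the Hankel products cancel. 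The key observation is that $H_{\Vb}$ depends only on $\Pi_- \Vb$ in the sense that $H_\Vb = H_{\Pi_-\Vb}$, since $\Pi_-(\Pi\Vb\,\Fb) = 0$. On $L^2_-$ we have $|D| = -D$, so $H_{|D|\Ub} = H_{-D\Ub}$. Similarly $H^*_\Vb$ depends only on $\Pi_-(\Vb^*)$, which gives $H^*_{|D|\Ub} = -H^*_{D\Ub}$ after checking how $(|D|\Ub)^* = |D|\Ub$ and $(D\Ub)^* = -D\Ub$ interact. With these substitutions the Hankel terms from the two pieces cancel exactly. This is a routine but sign-sensitive bookkeeping step, and I expect it to be the main place where errors can occur. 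I would double-check it by testing on $\Fb = \mathbf{E}_{n,l}$ with a trigonometric polynomial $\Ub$.

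For the unitary equivalence, fix $\Fb \in H^1_+$ and set $\Vs(t) = \Us(t)^* T_{\Ub(t)} \Us(t)$. Differentiating and using $\frac{d}{dt}\Us^* = -\Us^* B_\Ub$ (valid by skew-adjointness on the invariant domain $H^1_+$) together with the Lax equation gives
$$
\frac{d}{dt}\langle \Vs(t)\Fb | \Gb\rangle = \langle \Us^*(-B_\Ub T_\Ub + [B_\Ub,T_\Ub] + T_\Ub B_\Ub)\Us \Fb | \Gb\rangle = 0
$$
for $\Fb, \Gb \in H^1_+$. Here we use that $T_{\Ub(t)}$ maps $H^1_+$ into itself, because $\Ub \in H^s \subset W^{1,\infty}$. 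Hence $\Vs(t) = T_{\Ub(0)}$ by density, and therefore $T_{\Ub(t)} = \Us(t) T_{\Ub(0)} \Us(t)^*$.

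The domain technicalities, namely the differentiability of $\Us(t)\Fb$ in $L^2_+$ for $\Fb \in H^1_+$ and the invariance of $H^1_+$, are taken from Appendix~\ref{app:lwp} as stated.
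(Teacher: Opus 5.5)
Your proposal is correct and follows the paper's proof. The only variation is that you rewrite the first term as $\frac{i}{2}[D,K_\Ub]$ via $T_\Ub^2=\id-K_\Ub$, whereas the paper uses \eqref{eq:ToeplitzHankel} with the differentiated constraint $(D\Ub)\Ub+\Ub(D\Ub)=0$; both give $\frac{i}{2}(H^*_\Ub H_{D\Ub}-H^*_{D\Ub}H_\Ub)$, and your argument for the unitary equivalence supplies the details the paper omits.

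Two sign justifications are misstated, though your final formulas are right:
\begin{itemize}
\item The correct commutator is $[D,H^*_\Ub]=-[D,H_\Ub]^*=-H^*_{D\Ub}$, not $+H^*_{D\Ub}$. This minus sign is what your formula for $[D,K_\Ub]$ actually needs.
\item $H^*_\Vb=(H_\Vb)^*$ depends only on $\Pi_-\Vb$, not on $\Pi_-(\Vb^*)$. With the correct statement, $H^*_{|D|\Ub}=-H^*_{D\Ub}$ follows at once from $H_{|D|\Ub}=-H_{D\Ub}$.
\end{itemize}
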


\begin{remark*}
Note that, by the regularity imposed on $\Ub(t)$, it is easy to check that $T_{\Ub(t)}$ preserves the domain $\dom(B_{\Ub(t)})=H^1_+$. Hence the commutator above is well-defined on $\dom(B_{\Ub(t)})$ for any $t \in I$. Again, we will make tacitly use of such facts below.
\end{remark*}

\begin{proof}
Fix $t \in I$ and write $\Ub=\Ub(t)$ for notational simplicity. First, we  note $[D,T_\Ub] = T_{D \Ub}$ by the Leibniz rule, whence it follows
\begin{align*}
[B_{\Ub}, T_{\Ub}] & = -\frac{i}{2} [ T_\Ub  D + D  T_\Ub, T_\Ub] + \frac{i}{2} [T_{|D| \Ub}, T_\Ub] \\
& = -\frac{i}{2} \left (T_\Ub T_{D \Ub} + T_{D \Ub} T_\Ub \right ) + \frac{i}{2} [T_{|D| \Ub}, T_\Ub] =: (I) + (II).
\end{align*}
Differentiating the identity $\Ub^2 = \mathds{1}_d$ on $\T$, we find that $(D \Ub) \Ub + \Ub (D \Ub) = 0$. Thus $T_{(D \Ub) \Ub} + T_{(\Ub D) \Ub} = 0$ and from the general identity \eqref{eq:ToeplitzHankel} we get
$$
(I)  = -\frac{i}{2} \left ( T_{\Ub} T_{D \Ub} + T_{D \Ub} T_{\Ub} \right ) = \frac{i}{2}  H^*_\Ub H_{D \Ub} - \frac{i}{2} H^*_{D \Ub} H_\Ub  .
$$
Here we also used that $\Ub= \Ub^*$ and $(D \Ub)^* = -D \Ub$. Similarly, by invoking \eqref{eq:ToeplitzHankel} again and the facts that $H_{|D| \Ub} = -H_{D \Ub}$ and $H^*_{|D| \Ub} = -H^*_{D \Ub}$, we deduce
$$
(II)  = \frac{i}{2} [T_{|D|\Ub}, T_{\Ub}]  = -\frac{i}{2} T_{[\Ub, |D| \Ub]} - \frac{i}{2} H^*_\Ub H_{D \Ub} + \frac{i}{2} H^*_{D \Ub} H_{\Ub}.
$$
Adding the expression obtained for $(I)$ and $(II)$, wee that the parts containing the Hankel operators all cancel. Using that $\Ub= \Ub(t)$ solves \eqref{eq:HWM}, we find
$$
[B_\Ub, T_\Ub] = (I) + (II) = -\frac{i}{2} T_{[\Ub, |D| \Ub]} =  T_{\pt_t \Ub} = \frac{d}{dt} T_{\Ub}.
$$

The identity $T_{\Ub(t)} = \Us(t) T_{\Ub(0)} \Us(t)^*$ for $t \in I$ follows from the commutator identity and \eqref{eq:Us_ode}. We omit the details. This completes the proof of Lemma \ref{lem:Lax}.
\end{proof}

Recall that $K_{\Ub(t)} = \id - T_{\Ub(t)}^2$ by Lemma \ref{lem:key_identity} with the trace-class operator $K_{\Ub} = H^*_\Ub H_\Ub : L^2_+ \to L^2_+$. From Lemma \ref{lem:Lax} and the Leibniz rules for $\frac{d}{dt}$ and the commutator of operators, respectively, we directly deduce the following result.

\begin{cor} \label{cor:Lax2}
As a consequence of Lemma \ref{lem:Lax}, we have
$$
\frac{d}{dt} K_{\Ub(t)} = \left  [B_{\Ub(t)}, K_{\Ub(t)} \right ] \quad \mbox{for} \quad t \in I
$$
and the unitary equivalence
$$
K_{\Ub(t)} = \Us(t) K_{\Ub(0)} \Us(t)^* \quad \mbox{for} \quad t \in I.
$$
\end{cor}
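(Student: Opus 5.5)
The plan is to derive both assertions directly from Lemma \ref{lem:Lax} and the key identity of Lemma \ref{lem:key_identity}, applied at each time $t \in I$. The solution satisfies $\Ub(t,\cdot) \in \Gr_k(\C^d)$ a.e., so Lemma \ref{lem:key_identity} gives $K_{\Ub(t)} = \id - T_{\Ub(t)}^2$ for every $t \in I$. The unitary equivalence then follows purely algebraically. Using $T_{\Ub(t)} = \Us(t) T_{\Ub(0)} \Us(t)^*$ and $\Us(t)^*\Us(t) = \id$, we compute
$$
T_{\Ub(t)}^2 = \Us(t) T_{\Ub(0)} \Us(t)^* \Us(t) T_{\Ub(0)} \Us(t)^* = \Us(t) T_{\Ub(0)}^2 \Us(t)^*.
$$
Since $\Us(t)\Us(t)^* = \id$, this gives
$$
K_{\Ub(t)} = \Us(t)\big(\id - T_{\Ub(0)}^2\big)\Us(t)^* = \Us(t) K_{\Ub(0)} \Us(t)^*.
$$

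For the differential identity I would not prove a new Lax structure. Instead I would differentiate $K_{\Ub(t)} = \id - T_{\Ub(t)}^2$ by the product rule. Because $\Ub \in C(I;H^s)$ solves \eqref{eq:HWM} with $s>3/2$, the map $t \mapsto T_{\Ub(t)}$ is differentiable in operator norm with derivative $T_{\pt_t \Ub}$, where $\pt_t\Ub = -\frac{i}{2}[\Ub,|D|\Ub] \in C(I; H^{s-1}) \subset C(I;L^\infty)$. The product rule then gives
$$
\frac{d}{dt} K_{\Ub(t)} = -\Big(\tfrac{d}{dt}T_{\Ub(t)}\Big) T_{\Ub(t)} - T_{\Ub(t)}\Big(\tfrac{d}{dt}T_{\Ub(t)}\Big).
$$
Inserting Lemma \ref{lem:Lax} and the Leibniz rule for commutators, $[B,T^2] = [B,T]T + T[B,T]$, yields
$$
\frac{d}{dt} K_{\Ub(t)} = -[B_{\Ub(t)}, T_{\Ub(t)}^2] = [B_{\Ub(t)}, \id - T_{\Ub(t)}^2] = [B_{\Ub(t)}, K_{\Ub(t)}].
$$

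The only point needing care, and the expected obstacle, is domains. The commutator is unbounded and must be interpreted on $\dom(B_{\Ub(t)}) = H^1_+$. For the Leibniz expansion to be legitimate, $T_{\Ub(t)}$ must map $H^1_+$ into itself, which the remark after Lemma \ref{lem:Lax} guarantees for $s > 3/2$. Hence $K_{\Ub(t)}$ also preserves $H^1_+$, and every expression above is a well-defined identity of operators on $H^1_+$. This matches exactly the sense in which Lemma \ref{lem:Lax} is stated, so no further analysis is required.
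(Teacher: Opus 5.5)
Your proposal is correct and follows the paper's own route: write $K_{\Ub(t)} = \id - T_{\Ub(t)}^2$ via Lemma \ref{lem:key_identity}, then apply the Leibniz rules for $\frac{d}{dt}$ and for commutators together with Lemma \ref{lem:Lax}, and obtain the unitary equivalence by squaring $T_{\Ub(t)} = \Us(t) T_{\Ub(0)} \Us(t)^*$. Your remarks on operator-norm differentiability (via $H^{s-1} \subset L^\infty$ for $s > \frac32$) and on $T_{\Ub(t)}$ preserving $H^1_+$ are details the paper leaves implicit.
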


For later use, let us also record another important feature of the Lax pair structure. To this end, we notice that we always have that $\C^{d \times d} \subset \ker(B_{\Ub(t)})$ holds, that is,
\be \label{eq:Bu_kernel}
B_{\Ub(t)} \Eb = 0 \quad \mbox{for any $\Eb \in  \C^{d \times d} \subset L^2_+$}.
\ee
Indeed, take a constant matrix $\Fb \in \C^{d \times d}$. Then, since $D \Eb= 0$ and $\Pi D = \Pi |D|$, we readily find
$$
B_{\Ub(t)} \Eb = -\frac{i}{2} D \Pi(\Ub(t) \Eb) + \frac{i}{2} \Pi (|D|\Ub(t) \Eb) = 0.
$$
Based on this simple observation, we can deduce that $\Pi \Ub(t)$ can be simply expressed in terms of the unitary map $\Us(t): L^2_+ \to L^2_+$ applied to the initial condition $\Pi \Ub(0)$. 

\begin{lem} \label{lem:HWM_Bu}
The following identity holds true:
$$
\Pi \Ub(t) = \Us(t) \Pi \Ub(0) \quad \mbox{for} \quad t \in I.
$$
\end{lem}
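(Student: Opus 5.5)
The natural strategy is to show that $\Fb(t) := \Us(t)^* \Pi \Ub(t)$ is constant in time. Since $\Us(0) = \id$, we then get $\Fb(t) = \Pi\Ub(0)$, and unitarity of $\Us(t)$ gives $\Pi \Ub(t) = \Us(t)\Pi\Ub(0)$. Equivalently, we can show that $\Gb(t) := \Pi\Ub(t)$ solves
$$
\frac{d}{dt}\Gb(t) = B_{\Ub(t)}\Gb(t).
$$
Uniqueness for this linear evolution, which holds because $\Us(t)$ is the unitary propagator generated by the essentially skew-adjoint family $B_{\Ub(t)}$, then identifies $\Gb(t)$ with $\Us(t)\Gb(0)$. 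Concretely, for $\Hb\in H^1_+$ we would differentiate $\langle \Gb(t) | \Us(t)\Hb\rangle$ and use skew-adjointness of $B_{\Ub(t)}$ to see that it is constant.

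The key computation is therefore $B_\Ub \Pi\Ub = \Pi(\pt_t \Ub)$, where
$$
\Pi(\pt_t\Ub) = -\frac{i}{2}\Pi(\Ub|D|\Ub - (|D|\Ub)\Ub).
$$
We compute $B_\Ub\Pi\Ub$ term by term. One route is to use \eqref{eq:Bu_kernel}: it says $B_\Ub \Eb = 0$ for constants, so we may replace $\Pi\Ub$ by $\Pi\Ub - \Mean(\Ub)$ or add any constant as convenient. The cleaner route is direct. Write $\Pi\Ub = \Ub - \Pi_-\Ub$, where $\Pi_-\Ub = (\Pi\Ub)^* - \Mean(\Ub)$. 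The relevant terms are:
\begin{itemize}
\item $T_\Ub D\Pi\Ub = \Pi(\Ub\, |D|\Pi\Ub)$, since $D = |D|$ on $L^2_+$;
\item $D T_\Ub\Pi\Ub = D\Pi(\Ub\Pi\Ub) = \Pi D(\Ub \Pi\Ub)$, which by the Leibniz rule equals $\Pi((D\Ub)\Pi\Ub + \Ub D\Pi\Ub)$;
\item $T_{|D|\Ub}\Pi\Ub = \Pi((|D|\Ub)\Pi\Ub)$.
\end{itemize}
Combining these gives
$$
B_\Ub\Pi\Ub = -\frac{i}{2}\Pi\bigl(2\Ub\,|D|\Pi\Ub + (D\Ub)\Pi\Ub - (|D|\Ub)\Pi\Ub\bigr).
$$
Since $D\Ub - |D|\Ub = -2|D|\Pi_-\Ub$, this becomes
$$
B_\Ub\Pi\Ub = -i\,\Pi\bigl(\Ub\,|D|\Pi\Ub - (|D|\Pi_-\Ub)\Pi\Ub\bigr).
$$

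This has to be matched with $-\frac{i}{2}\Pi(\Ub|D|\Ub - (|D|\Ub)\Ub)$. Using $\Ub^2 = \mathds{1}_d$, the target can be rewritten as $-i\,\Pi(\Ub|D|\Ub)$ plus a term from $|D|(\Ub^2) = 0$. Writing $|D| = D(\Pi - \Pi_-)$ and expanding $\Ub\,|D|\Ub$ with $\Ub = \Pi\Ub + \Pi_-\Ub$, the mismatched pieces are of the form $\Pi(\Pi_-\Ub \cdot D\Pi_-\Ub)$ and $\Pi((D\Pi_-\Ub)\Pi_-\Ub)$. These vanish because products of two $L^2_-$ functions (the means are handled separately) have only negative frequencies. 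The remaining discrepancy should be exactly a multiple of $\Pi D(\Ub^2) = 0$. The regularity $s > 3/2$ ensures that all products are well defined in $L^2$ and that $\Pi\Ub(t)\in C^1(I;L^2_+)\cap C(I;H^1_+)$.

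The main obstacle is this algebraic bookkeeping: verifying $B_\Ub\Pi\Ub = \Pi\pt_t\Ub$ exactly, with the signs from $D$ versus $|D|$ on $L^2_\pm$ and the constraint $\Ub^2 = \mathds{1}_d$ used at the right place. If the direct expansion becomes messy, the fallback is to work in Fourier coefficients, splitting products into $\Pi_\pm$ pieces and checking the identity mode by mode, in the same style as the trace computation in Lemma \ref{lem:key_identity}.
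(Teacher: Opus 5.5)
Your proposal is correct. The overall scheme matches the paper's: you show that $\Us(t)^*\Pi\Ub(t)$ is constant, i.e.\ that $\Pi\Ub(t)$ solves $\pt_t\Gb = B_{\Ub(t)}\Gb$. Your pairing of $\Pi\Ub(t)$ against $\Us(t)\Fb$ with $\Fb\in H^1_+$ is in fact a slightly more careful version of the differentiation the paper writes in \eqref{eq:Ub_evolv}.

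Where you differ is the key identity $B_\Ub\Pi\Ub = \Pi\pt_t\Ub$. The paper gets it in one line from the Lax equation of Lemma \ref{lem:Lax} evaluated on the constant $\mathds{1}_d$. Since $T_\Ub\mathds{1}_d = \Pi\Ub$ and $B_\Ub\mathds{1}_d = 0$ by \eqref{eq:Bu_kernel},
\[
\pt_t\Pi\Ub = \tfrac{d}{dt}T_{\Ub}\mathds{1}_d = B_\Ub T_\Ub\mathds{1}_d - T_\Ub B_\Ub\mathds{1}_d = B_\Ub\Pi\Ub .
\]
This way all the algebra is absorbed into the Hankel-operator cancellations already done for the Lax pair. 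Your direct computation from the PDE is essentially the alternative the paper sketches in the remark after the lemma. It is self-contained and does not need Lemma \ref{lem:Lax}, at the price of the bookkeeping.

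That bookkeeping does close, but the constraint to invoke is $D(\Ub^2)=0$, i.e.\ $(D\Ub)\Ub+\Ub\,D\Ub=0$, not $|D|(\Ub^2)=0$. Since $|D|$ is not a derivation, $\Pi(\Ub|D|\Ub+(|D|\Ub)\Ub)$ need not vanish. For instance, for the stationary map $\Ub=\begin{pmatrix}0&\eu^{-i\theta}\\ \eu^{i\theta}&0\end{pmatrix}$ one has $|D|\Ub=\Ub$, so this term equals $2\mathds{1}_2$. Hence the target is not $-i\Pi(\Ub|D|\Ub)$ up to a vanishing term.

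Start from your (correct) formula for $B_\Ub\Pi\Ub$ and use $|D|\Pi\Ub-|D|\Pi_-\Ub=D\Ub$. One then gets exactly
\[
B_\Ub\Pi\Ub-\Pi\pt_t\Ub=-\frac{i}{2}\,\Pi\bigl(\Ub\,D\Ub+(D\Ub)\,\Ub\bigr)-i\,\Pi\bigl((|D|\Pi_-\Ub)\,\Pi_-\Ub\bigr).
\]
The first term vanishes by the Leibniz rule and $\Ub^2=\mathds{1}_d$. The second vanishes because both factors have Fourier support in $\{n\le -1\}$. This is precisely the ``multiple of $\Pi D(\Ub^2)$ plus a product of two $L^2_-$ pieces'' structure you anticipated, so no Fourier-mode fallback is needed.
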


\begin{proof}
Since $\Us(t) : L^2_+ \to L^2_+$ is unitary, the claimed identity is equivalent to
$$
\Us(t)^* \Pi \Ub(t) = \Pi \Ub(0).
$$
Furthermore, from $\Us(0)^* = \id$, it suffices to show that the time derivative of left-hand side above is zero. Indeed, we find
\be \label{eq:Ub_evolv}
\frac{d}{dt} \left (\Us(t)^* (\Pi \Ub(t) \right ) = \Us(t)^* \left ( -B_{\Ub(t)} \Pi \Ub(t) + \pt_t \Pi \Ub(t) \right ),
\ee
using the skew-symmetry of $B_{\Ub(t)}$. Next, by the Lax equation in Lemma \ref{lem:Lax} and the fact $B_{\Ub(t)}(\mathds{1}_d)=0$ from above, we obtain
$$
\frac{d}{dt} T_{\Ub(t)}(\mathds{1}_d) = B_{\Ub(t)} T_{\Ub(t)}(\mathds{1}_d) - T_{\Ub(t)} B_{\Ub(t)}(\mathds{1}_d) = B_{\Ub(t)} \Pi(\Ub(t)).
$$
On the other hand, we evidently see that $\pt_t \Pi \Ub(t) = \frac{d}{dt} T_{\Ub(t)}(\mathds{1})$ and hence the right-hand side in \eqref{eq:Ub_evolv} is equal to zero. This completes the proof.
\end{proof}

\begin{remark*}
Since $\Us(t)$ solves \eqref{eq:Us_ode}, we deduce from Lemma \ref{lem:HWM_Bu} that
\be
\pt_t \Pi \Ub(t) = B_{\Ub(t)} \Pi \Ub(t) \quad \mbox{for $t \in I$}.
\ee
We remark that another (and more direct) way of obtaining this differential equation follows from applying the Cauchy--Szeg\H{o} projection $\Pi$ to \eqref{eq:HWM} itself together with the general identity
$$
\Pi (\Fb \Gb) = \Pi( \Fb \Pi(\Gb)) + \Pi(\Pi(\Gb) \Fb) - \Pi(\Fb) \Pi(\Gb)
$$
in combination with the pointwise identity $(D \Ub) \Ub + \Ub (D \Ub) = 0$ because of the pointwise constraint $\Ub^2 =\mathds{1}_d$. We leave the details to the interested reader.
\end{remark*}

\subsection{Explicit formula}

By further exploiting the Lax structure for \eqref{eq:HWM}, we shall now find an explicit formula for the unitary propagator $\Us(t)$ generated by $B_{\Ub(t)}$ in the setting of sufficiently regular solutions, that is, we always assume that $\Ub \in C(I; H^s(\T; \Gr_k(\C^d)))$ is a solution of \eqref{eq:HWM} with some $s > \frac{3}{2}$ on some  time interval $I \subset \R$ such that $0 \in I$. 

We start with some preliminaries, which will be essential for proving the explicit formula stated in Theorem \ref{thm:explicit_smooth} below. We begin by showing that the backshift operator $S^*$ enjoys the following unitary equivalence on the time interval $I$.

\begin{lem} \label{lem:S_star_unitary}
For any $t \in I$, it holds that
$$
\Us(t)^* S^* \Us(t) =  \eu^{-it T_{\Ub_0}} S^* \quad \mbox{on} \quad L^2_+.
$$
\end{lem}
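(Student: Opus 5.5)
We plan to show that $\Ws(t) := \Us(t)^* S^* \Us(t)$ and $\eu^{-itT_{\Ub_0}} S^*$ satisfy the same linear evolution in $t$ with the same initial value $S^*$ at $t=0$. The natural route is to compute the commutator $[B_{\Ub}, S^*]$ explicitly and show it is a simple expression involving $T_\Ub$ and $S^*$. The guiding identity is that, for smooth $\Fb\in H^1_+$,
$$
[S^*, B_{\Ub}] = -i\, T_{\Ub} S^* \quad \mbox{on } H^1_+ ,
$$
or a close variant of it (possibly $-iS^*T_\Ub$, or with the sign reversed). We will check this directly.

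\textbf{Computation of the commutator.} We use the elementary relations $[S^*, D] = S^*$ on $H^1_+$. Indeed, $D S^*\Fb$ has coefficients $n\widehat{\Fb}_{n+1}$, while $S^* D\Fb$ has coefficients $(n+1)\widehat{\Fb}_{n+1}$. We also use the commutator formula of Lemma \ref{lem:commutator}, namely $[S^*, T_\Vb]\Fb = (S^*\Pi\Vb)\Mean(\Fb)$. Expanding
$$
[S^*, B_\Ub] = -\tfrac{i}{2}\big([S^*,T_\Ub]D + T_\Ub[S^*,D] + [S^*,D]T_\Ub + D[S^*,T_\Ub]\big) + \tfrac{i}{2}[S^*,T_{|D|\Ub}],
$$
we note that $[S^*,T_\Ub]D\Fb = (S^*\Pi\Ub)\Mean(D\Fb) = 0$, since $\Mean(D\Fb)=0$. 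The remaining terms are
$$
-\tfrac{i}{2}\big(T_\Ub S^* + S^*T_\Ub\big) - \tfrac{i}{2}\, D(S^*\Pi\Ub)\Mean(\Fb) + \tfrac{i}{2}\,(S^*\Pi|D|\Ub)\Mean(\Fb).
$$
Using $S^*T_\Ub = T_\Ub S^* + (S^*\Pi\Ub)\Mean(\cdot)$, together with the identity $\Pi|D|\Ub = D\Pi\Ub$ and the relation $S^*D = DS^* + S^*$ applied to $\Pi\Ub$, the rank-one terms cancel. This leaves the clean identity
$$
[S^*, B_\Ub] = -i\,T_\Ub S^* .
$$
This bookkeeping is the main point where care is needed. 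If a residual rank-one term survives, we will instead aim for the form $-\tfrac{i}{2}(T_\Ub S^* + S^*T_\Ub) + \ldots$, and redo the argument with whichever combination appears.

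\textbf{ODE argument.} For $\Fb\in H^1_+$ we differentiate, using $\frac{d}{dt}\Us = B\Us$ and $\frac{d}{dt}\Us^* = -\Us^* B$. This gives
$$
\frac{d}{dt}\Ws(t) = \Us^*[S^*,B_{\Ub(t)}]\Us = -i\,\Us^*T_{\Ub(t)}S^*\Us = -i\,\Us^*T_{\Ub(t)}\Us\,\Us^*S^*\Us = -i\,T_{\Ub_0}\Ws(t).
$$
The last step uses Lemma \ref{lem:Lax}, in the form $\Us^*T_{\Ub(t)}\Us = T_{\Ub_0}$. Since $T_{\Ub_0}$ is bounded and $\Ws(0)=S^*$, uniqueness for this linear ODE with bounded generator gives $\Ws(t) = \eu^{-itT_{\Ub_0}}S^*$. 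To make this rigorous, we note that $\frac{d}{dt}\big(\eu^{itT_{\Ub_0}}\Ws(t)\Fb\big) = 0$ for $\Fb$ in $H^1_+$, which is dense. Both sides are bounded operators, so the identity extends to all of $L^2_+$.

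\textbf{Technical points.} We need $S^*$ to preserve $H^1_+$, and we need $\Us(t)$ to map $H^1_+$ to itself with $\Us(\cdot)\Fb\in C^1(I;L^2_+)$; both are stated earlier. We also need to justify differentiating $\Us(t)^*$ applied to $S^*\Us(t)\Fb$. For this we pair with $\Gb\in H^1_+$ and differentiate $\langle S^*\Us(t)\Fb \mid \Us(t)\Gb\rangle$, which only requires the $C^1$ property of $\Us(\cdot)\Fb$ and the skew-symmetry of $B_{\Ub(t)}$ on $H^1_+$.
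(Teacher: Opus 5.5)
Your proposal is correct and follows the paper's proof step for step. The paper likewise first proves $[S^*, B_{\Ub(t)}]\Fb = -\ii T_{\Ub(t)} S^* \Fb$ for $\Fb \in H^1_+$ using $[S^*,D] = S^*$, Lemma~\ref{lem:commutator} and $\Pi |D| = D \Pi$; your cancellation of the rank-one terms is right, so no fallback is needed. It then differentiates $\Us(t)^* S^* \Us(t) \Fb$, uses the Lax relation $\Us(t)^* T_{\Ub(t)} \Us(t) = T_{\Ub_0}$ to obtain the linear equation with bounded generator $-\ii T_{\Ub_0}$, integrates, and extends by density, exactly as you do; your weak pairing with $\Gb \in H^1_+$ to justify differentiating $\Us(t)^*$ is extra rigor that the paper leaves implicit.
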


\begin{proof}
We divide the proof into two steps as follows.

\medskip
\textbf{Step 1.}  Let $t \in I$ be given. We first prove the commutator identity
\be \label{eq:comm_SB}
 [S^*, B_{\Ub(t)}] \Fb = -\ii T_{\Ub(t)} S^* \Fb  \quad \mbox{for} \quad \Fb \in H^1_+.
\ee
Indeed, from the fact that $[S^*, D]  = S^*$ on $H^1_+$ and Lemma \ref{lem:commutator}, we find
\begin{align*}
[S^*, T_\Ub D + D T_\Ub] \Fb & = T_\Ub [S^*,D] \Fb + [S^*,T_\Ub] D \Fb + D [S^*, T_\Ub] \Fb + [S^*,D] T_\Ub \Fb \\
& = T_{\Ub} S^* \Fb + (S^* \Pi \Ub) \Mean(D \Fb) + D ( S^* \Pi \Ub) \Mean(\Fb) + S^* T_\Ub \Fb\\
& = T_\Ub S^* \Fb + S^* T_\Ub \Fb + D (S^* \Pi \Ub) \Mean(\Fb),
\end{align*}
where we used that $\Mean(D \Fb)=0$ for any $\Fb \in H^1_+$. Next, we notice
$$
[S^*, T_{|D| \Ub}] \Fb = (S^* \Pi |D| \Ub) \Mean(\Fb) = (S^* \Pi D \Ub) \Mean(\Fb) = (S^* D \Pi \Ub) \Mean(\Fb),
$$ 
since $\Pi |D| = \Pi D = D \Pi$. Recalling that $B_\Ub=-\frac{\ii}{2}(T_\Ub D + D T_\Ub) + \frac{\ii}{2} T_{|D|\Ub}$, we deduce
\begin{align*}
[S^*, B_\Ub] \Fb & = -\frac{\ii}{2} ( T_\Ub S^* \Fb + S^* T_\Ub \Fb ) + \frac{\ii}{2} [S^*,D](\Pi \Ub) \Mean(\Fb) \\
& = -\frac{\ii}{2} ( T_\Ub S^* \Fb + S^* T_\Ub \Fb ) + \frac{\ii}{2} (S^* \Pi \Ub) \Mean(\Fb) \\
& = -\ii T_{\Ub} S^* \Fb - \frac{\ii}{2} [S^*,T_\Ub] \Fb + \frac{\ii}{2} (S^* \Pi \Ub) \Mean(\Fb)  = -\ii T_{\Ub} S^* \Fb,
\end{align*}
using once again that $[S^*, T_\Ub] \Fb = (S^* \Pi \Ub) \Mean(\Fb)$. This proves \eqref{eq:comm_SB}.

\medskip
\textbf{Step 2.} Let $\Fb \in H^1_+$ be given. From \eqref{eq:comm_SB} we find
$$
\frac{d}{dt} \Us(t)^* S^* \Us(t) \Fb  = \Us(t)^* [S^*, B_{\Ub(t)}] \Us(t) \Fb = \Us(t)^* (-\ii T_{\Ub(t)} S^* ) \Us(t) \Fb.
$$
Since $T_{\Ub(t)} = \Us(t) T_{\Ub(0)} \Us(t)^*$ by the Lax evolution in Lemma \ref{lem:Lax} and the fact that $\Us(t)$ is unitary, we conclude
$$
\frac{d}{dt} \Us(t)^* S^* \Us(t) \Fb = -\ii T_{\Ub(0)} \Us(t)^* S^* \Us(t) \Fb.
$$
By integration in time and using that $\Us(0)= \id$, we infer 
$$
\Us(t)^* S^* \Us(t) \Fb = \eu^{-\ii t T_{\Ub_0}} S^* \Fb.
$$
By density, this identity extends to all of $\Fb \in L^2_+$. The proof of Lemma \ref{lem:S_star_unitary} is now complete.
\end{proof}

The next result shows that the mean is preserved by the unitary maps $\Us(t)$.

\begin{lem} \label{lem:Us_mean}
For any $\Fb \in L^2_+$ and $t \in I$, we have
$$
\Mean(\Us(t) \Fb) = \Mean(\Fb).
$$
\end{lem}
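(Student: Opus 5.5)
We will show that $\Us(t)^*$ maps the subspace of constant functions $\C^{d\times d}\subset L^2_+$ identically to itself. The mean-preservation statement then follows by duality. Since $\Mean$ is the orthogonal projection onto the constants $E=\C^{d\times d}$, we have
$$
\langle \Mean(\Us(t)\Fb) | \Eb \rangle = \langle \Us(t)\Fb | \Eb\rangle = \langle \Fb | \Us(t)^*\Eb \rangle \quad \mbox{for every $\Eb \in \C^{d\times d}$}.
$$
So it suffices to prove that $\Us(t)^* \Eb = \Eb$, or equivalently (by unitarity) that $\Us(t)\Eb = \Eb$, for all constant matrices $\Eb$ and all $t\in I$.

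To prove $\Us(t)\Eb=\Eb$ we use the kernel property \eqref{eq:Bu_kernel}, namely $B_{\Ub(t)}\Eb = 0$ for constant $\Eb$. We compute the time derivative of $\Us(t)^*\Eb$. Note that $\Eb \in H^1_+ = \dom(B_{\Ub(t)})$. Differentiating $\Us(t)^*\Us(t)=\id$ gives $\frac{d}{dt}\Us(t)^* = -\Us(t)^* B_{\Ub(t)}$ on $H^1_+$, using the skew-symmetry of $B_{\Ub(t)}$, exactly as in the proof of Lemma \ref{lem:HWM_Bu}. Hence
$$
\frac{d}{dt}\Us(t)^*\Eb = -\Us(t)^*B_{\Ub(t)}\Eb = 0.
$$
Integrating from $0$ and using $\Us(0)=\id$ yields $\Us(t)^*\Eb=\Eb$. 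Therefore $\Us(t)\Eb = \Eb$ as well.

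The only delicate point is justifying the differentiation of $\Us(t)^*\Eb$. Rather than differentiating the adjoint directly, we will argue weakly. For $\Gb\in H^1_+$, the map $t\mapsto \Us(t)\Gb$ is in $C^1(I;L^2_+)$ with derivative $B_{\Ub(t)}\Us(t)\Gb$, and $\Us(t)\Gb \in H^1_+$. Consequently
$$
\frac{d}{dt}\langle \Us(t)\Gb|\Eb\rangle = \langle B_{\Ub(t)}\Us(t)\Gb | \Eb\rangle = -\langle \Us(t)\Gb | B_{\Ub(t)}\Eb\rangle = 0,
$$
where the middle step uses skew-symmetry on $H^1_+$. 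It follows that $\langle \Us(t)\Gb|\Eb\rangle = \langle \Gb|\Eb\rangle$, i.e. $\Mean(\Us(t)\Gb)=\Mean(\Gb)$, for all $\Gb\in H^1_+$. Since $\Us(t)$ and $\Mean$ are bounded, this identity extends by density to all $\Fb\in L^2_+$, which completes the argument.
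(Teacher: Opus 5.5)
Your proposal is correct, and its final weak argument is the paper's proof: for $\Gb \in H^1_+$ and constant $\Eb$, you differentiate $\langle \Us(t)\Gb | \Eb\rangle$, use skew-symmetry of $B_{\Ub(t)}$ on $H^1_+$ together with $B_{\Ub(t)}\Eb = 0$ from \eqref{eq:Bu_kernel}, and extend by density. Your opening reformulation via $\Us(t)^*\Eb = \Eb$ is only an equivalent repackaging (it follows from the weak identity itself), so the formal differentiation of $\Us(t)^*$ needs no separate justification.
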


\begin{proof}
Let $\Fb \in H^1_+$ and $\Eb \in \C^{d \times d}$ be a constant matrix. Using the skew-symmetry of $B_{\Ub(t)}$, we readily deduce that 
$$
\frac{d}{dt} \langle \Us(t) \Fb | \Eb \rangle = \langle B_{\Ub(t)} \Ub(t) \Fb | \Eb \rangle = -\langle \Ub(t) | B_{\Ub(t)} \Eb \rangle = 0
$$
thanks to \eqref{eq:Bu_kernel}. This shows that $\frac{d}{dt} ( \Us(t) \Fb) \perp \Cd$ and hence $\frac{d}{dt} \Mean(\Us(t) \Fb)=0$, whence it follows that $\Mean(\Us(t) \Fb) = \Mean(\Fb)$ for all $\Fb \in H^1_+$. Since $\Mean$ and $\Us(t)$ are bounded operators on $L^2_+$, this identity extends to all $\Fb \in L^2_+$ by density.
\end{proof}

We are now ready to establish the explicit formula for \eqref{eq:HWM} in the setting of sufficiently smooth solutions on a given time interval $[0,T]$.

\begin{thm}[Explicit formula in $H^s$ with $s > \frac{3}{2}$] \label{thm:explicit_smooth}
Let $\Ub \in C(I; H^s(\T;\Gr_k(\C^d)))$ be a solution of \eqref{eq:HWM} with some $s > \frac{3}{2}$. Denote by $\{ \Us(t) \}_{t \in I}$ the  unitary maps on $L^2_+$ as given by \eqref{eq:Ub_evolv}. Then, for any $\Fb \in L^2_+$, it holds that
$$
(\Us(t) \Fb)(z) = \Mean \left ((\id- z \eu^{-it T_{\Ub_0}}S^*)^{-1} \Fb \right ) \quad \mbox{for $(t,z) \in I \times \D$}.
$$
In particular, we have that
$$
\Pi \Ub(t,z) = \Mean \left ( (\id - z \eu^{-it T_{\Ub_0}} S^*)^{-1} \Pi \Ub_0 \right ) \quad \mbox{for $(t,z) \in I \times \D$}.
$$
\end{thm}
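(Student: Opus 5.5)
The plan is to combine the reproducing kernel formula \eqref{eq:reproduce} with the conjugation identity of Lemma \ref{lem:S_star_unitary} and the mean preservation of Lemma \ref{lem:Us_mean}. Fix $t \in I$, $z \in \D$ and $\Fb \in L^2_+$, and apply \eqref{eq:reproduce} to $\Gb := \Us(t)\Fb \in L^2_+$:
$$
(\Us(t)\Fb)(z) = \Mean\big( (\id - z S^*)^{-1} \Us(t) \Fb \big).
$$
Since $\|S^*\| \le 1$ and $|z|<1$, the resolvent is given by the norm-convergent Neumann series $\sum_{j\ge 0} z^j (S^*)^j$.

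Next I transfer the shift through the unitary. Because $\Us(t)$ is unitary, Lemma \ref{lem:S_star_unitary} gives $S^* \Us(t) = \Us(t)\, \eu^{-itT_{\Ub_0}} S^*$. Iterating this yields
$$
(S^*)^j \Us(t) = \Us(t)\,(\eu^{-itT_{\Ub_0}} S^*)^j \quad \mbox{for all } j \in \N.
$$
Since $\eu^{-itT_{\Ub_0}}$ is unitary, $\eu^{-itT_{\Ub_0}}S^*$ is a contraction, so its Neumann series also converges in operator norm for $|z|<1$. Summing therefore gives
$$
(\id - zS^*)^{-1}\Us(t) = \Us(t)\,(\id - z\eu^{-itT_{\Ub_0}}S^*)^{-1}.
$$

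It remains to take the mean. By Lemma \ref{lem:Us_mean}, $\Mean(\Us(t)\Gb) = \Mean(\Gb)$ for every $\Gb \in L^2_+$. Applying this with $\Gb = (\id - z\eu^{-itT_{\Ub_0}}S^*)^{-1}\Fb$ gives the first formula.

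The second formula follows by taking $\Fb = \Pi\Ub_0$ and using Lemma \ref{lem:HWM_Bu}, which gives $\Pi\Ub(t) = \Us(t)\Pi\Ub_0$.

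There is no serious obstacle here, since the substantive work sits in Lemmas \ref{lem:S_star_unitary} and \ref{lem:Us_mean}. The only points to check are the operator-norm convergence of both Neumann series, which allows summing the intertwining identity term by term, and that the notation $\Ub_0$ means $\Ub(0)$.
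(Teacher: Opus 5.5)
Your proposal is correct and follows the paper's proof step for step. You apply the reproducing kernel formula to $\Us(t)\Fb$, then use the intertwining $S^*\Us(t)=\Us(t)\eu^{-itT_{\Ub_0}}S^*$ from Lemma \ref{lem:S_star_unitary} summed through the geometric series, mean preservation from Lemma \ref{lem:Us_mean}, and finally Lemma \ref{lem:HWM_Bu} for the formula for $\Pi\Ub(t)$.
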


\begin{proof}
Let $t \in I$ and $\Fb \in L^2_+$ be given. From the reproducing-kernel formula \eqref{eq:reproduce} we can write
$$
(\Us(t) \Fb)(z) = \Mean \left ( (\id - z S^*)^{-1} \Us(t) \Fb \right) \quad \mbox{for $z \in \D$}.
$$
Now, by Lemma \ref{lem:S_star_unitary}, we have $S^* \Us(t) = \Us(t) \eu^{-itT_{\Ub_0}} S^*$. If we take the geometric series expansion in $z \in \D$, we deduce the identity
$$
(\id-z S^*)^{-1} \Us(t) =  \Us(t) (\id-z \eu^{-it T_{\Ub_0}} S^*)^{-1} \quad \mbox{for $z \in \D$}.
$$
Therefore,
$$
(\Us(t) \Fb)(z) = \Mean \left ( \Us(t) (\id-z \eu^{-it T_{\Ub_0}} S^*)^{-1} \Fb \right ) = \Mean \left ( (\id-z \eu^{-it T_{\Ub_0}} S^*)^{-1} \Fb \right ),
$$
where the last equation follows from Lemma \ref{lem:Us_mean} above. 

Finally, let us take $\Fb = \Pi \Ub_0 \in L^2_+$ in the explicit formula derived above. In view of Lemma \ref{lem:HWM_Bu}, we conclude that
$$
\Pi \Ub(t,z) = (\Us(t) \Pi \Ub_0)(z) = \Mean \left ( ( \id - z \eu^{-it T_{\Ub_0}} S^*)^{-1} \Pi \Ub_0 \right ) 
$$ 
for any $t \in I$ and $z \in \D$. This completes the proof of Theorem \ref{thm:explicit_smooth}.
\end{proof}

\section{Global well-posedness for rational data}

\label{sec:rational}

In this section, we will use the explicit formula in Theorem \ref{thm:explicit_smooth} to show that rational initial data $\Ub_0 \in \mathcal{R}at(\T; \Gr_k(\C^d))$ will give rise to unique smooth global-in-time solutions of \eqref{eq:HWM}. As a by-product of our analysis, we prove that rational solutions are in fact quasiperiodic in time, which implies a-priori bounds on all Sobolev norms $\| \Ub(t) \|_{H^s}$ for any $s  >0$ in the rational case.

\subsection{Global existence for rational data}
Suppose  $\Ub_0 \in H^{1/2}(\T; \Gr_k(\C^d))$ is an initial datum for \eqref{eq:HWM} -- which is not necessarily rational for the moment. In view of the explicit formula stated in Theorem \ref{thm:explicit_smooth} valid for sufficiently smooth solutions, we now {\em define} the expression
\be \label{def:Us}
(\Us(t) \Fb)(z) := \Mean \left ( (\id-z \eu^{-it T_{\Ub_0}} S^*)^{-1} \Fb \right ) \quad \mbox{for} \quad (t,z)  \in \R \times \D
\ee
for any given $\Fb \in L^2_+=L^2_+(\T; \C^{d \times d})$. Of course, the map $\Us(t)$ depends on the initial datum $\Ub_0$. But for the sake of simplicity we have chosen to omit this dependence in our notation.

Let us begin with a spectral result, which rules out the existence of eigenvalues of the contraction $\eu^{-it T_{\Ub_0}} S^* : L^2_+ \to L^2_+$ on the unit circle. Recall that $\sp(T)$ denotes the point spectrum of an operator $T \in \Ls(H)$ on a Hilbert space $H$. Notice also here we do not assume that $\Ub_0$ is rational for the following result.

\begin{lem} \label{lem:sigma_p}
Let $\Ub_0 \in H^{1/2}(\T; \Gr_k(\C^d))$ be given. For all $t \in \R$, it holds that
$$
\sp ( \eu^{-i t T_{\Ub_0}} S^*) \cap \pt \D = \emptyset.
$$
\end{lem}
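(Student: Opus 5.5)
The plan is to argue by contradiction, using three ingredients: a general fact about unimodular eigenvalues of contractions, the commutator formula of Lemma \ref{lem:commutator}, and the reproducing identity \eqref{eq:reproduce}. Fix $t \in \R$ and write $A := \eu^{-itT_{\Ub_0}} S^*$. This is a contraction on $L^2_+$, since $\eu^{-itT_{\Ub_0}}$ is unitary ($T_{\Ub_0}$ is self-adjoint) and $\|S^*\| \le 1$. Suppose $\lambda \in \pt\D$ and set $\mathcal{N} := \ker(A - \lambda \id)$. I aim to show $\mathcal{N} = \{0\}$.

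\emph{Step 1: elements of $\mathcal{N}$ have zero mean.} Let $\Fb \in \mathcal{N}$. Then $\|S^*\Fb\| = \|\eu^{-itT_{\Ub_0}} S^* \Fb\| = \|\lambda \Fb\| = \|\Fb\|$. By \eqref{eq:Sdefect} we have $\|S^*\Fb\|^2 = \|SS^*\Fb\|^2 = \|\Fb\|^2 - \|\Mean(\Fb)\|^2$, and hence $\Mean(\Fb) = 0$. (Alternatively, one can use the standard fact that $A\Fb = \lambda\Fb$ with $|\lambda|=1$ forces $A^*\Fb = \bar\lambda \Fb$. This gives $\Fb = \lambda S\eu^{itT_{\Ub_0}}\Fb \in \ran S$.)

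\emph{Step 2: $\mathcal{N}$ is invariant under $T_{\Ub_0}$.} This is the step that uses the specific structure, namely the near $S^*$-invariance coming from Lemma \ref{lem:commutator}. For $\Fb \in \mathcal{N}$, Step 1 gives $\Mean(\Fb)=0$, so the lemma yields $S^* T_{\Ub_0}\Fb = T_{\Ub_0} S^*\Fb$. Since $\eu^{-itT_{\Ub_0}}$ commutes with $T_{\Ub_0}$, it follows that
$$
A T_{\Ub_0}\Fb = \eu^{-itT_{\Ub_0}} T_{\Ub_0} S^* \Fb = T_{\Ub_0} A \Fb = \lambda T_{\Ub_0}\Fb .
$$
Thus $T_{\Ub_0}\mathcal{N} \subset \mathcal{N}$. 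Because $\mathcal{N}$ is closed and $T_{\Ub_0}$ is bounded and self-adjoint, $\mathcal{N}$ reduces $T_{\Ub_0}$. Consequently every bounded Borel function of $T_{\Ub_0}$ preserves $\mathcal{N}$; in particular $\eu^{itT_{\Ub_0}}\mathcal{N} \subset \mathcal{N}$.

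\emph{Step 3: $\mathcal{N}$ is $S^*$-invariant, hence trivial.} For $\Fb \in \mathcal{N}$ we have $S^*\Fb = \lambda \eu^{itT_{\Ub_0}}\Fb$, which lies in $\mathcal{N}$ by Step 2. Iterating gives $(S^*)^n \Fb \in \mathcal{N}$ for all $n \ge 0$. Applying Step 1 to each of these vectors, $\widehat{\Fb}_n = \Mean((S^*)^n\Fb) = 0$ for every $n$. Therefore $\Fb = 0$, so $\mathcal{N} = \{0\}$ and $\lambda \notin \sp(A)$.

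The main point of care is Step 2. The commutator $[S^*,T_{\Ub_0}]$ vanishes only on mean-zero vectors, so it is essential to use Step 1 first to know that all of $\mathcal{N}$ has zero mean. Only then can one pass from $S^*$ commuting with $T_{\Ub_0}$ on $\mathcal{N}$ to invariance of $\mathcal{N}$ under the full functional calculus. The argument does not use rationality of $\Ub_0$, and $H^{1/2}$ enters only through $\Ub_0^*=\Ub_0$, which makes $T_{\Ub_0}$ self-adjoint.
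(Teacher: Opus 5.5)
Your proof is correct and follows the paper's argument essentially step for step. You get zero mean from $\|S^*\Fb\|=\|\Fb\|$, invariance of the eigenspace under $T_{\Ub_0}$ (hence under $\eu^{itT_{\Ub_0}}$) via Lemma~\ref{lem:commutator}, and then $S^*$-invariance, which forces all Taylor coefficients to vanish; your added justification that $\mathcal{N}$ reduces $T_{\Ub_0}$ fills in a step the paper states without comment (equally, $\eu^{itT_{\Ub_0}}$ is a norm limit of polynomials in $T_{\Ub_0}$).
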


\begin{remark*}
For a sufficiently smooth solution $\Ub \in C(I; H^s)$ of \eqref{eq:HWM}, we can see that $\eu^{-it T_{\Ub_0}} S^*$ for $t \in I$ is unitarily equivalent to the backward shift $S^*$ thanks to Lemma \ref{lem:S_star_unitary}. Since it is well-known that $\sp(S^*) \cap \pt \D = \emptyset$, the assertion above immediately follows. However, the point here is that we do not assume the existence of a sufficiently smooth solution but we provide a direct proof of this claim. In fact, this result will turn out to be essential when extending rational solutions globally in time.
 \end{remark*}

\begin{proof}
We argue by contradiction. Assume there exists $\Fb \in L^2_+ \setminus \{ 0 \}$ such that
\be \label{eq:eigen_circle}
\eu^{-it T_{\Ub_0}} S^* \Fb = \lambda \Fb
\ee
for some $\lambda \in \pt \D$. Taking the $L^2$-norm on both sides while using that $\eu^{-it T_{\Ub_0}}$ is unitary and $|\lambda|=1$, we deduce that $\|S^* \Fb\|_{L^2} = \| \Fb \|_{L^2}$, which by \eqref{eq:Sdefect} implies
$$
\Mean(\Fb) = 0.
$$
From Lemma \ref{lem:commutator} this yields that $[S^*,T_{\Ub_0}] \Fb = 0$. Thus by applying $T_{\Ub_0}$ to \eqref{eq:eigen_circle} and using that $T_{\Ub_0}$ and $\eu^{-it T_{\Ub_0}}$ commute, we deduce 
$$
\eu^{-it T_{\Ub_0}} S^* T_{\Ub_0} \Fb = \lambda T_{\Ub_0} \Fb.
$$
Hence the eigenspace
$$
\Vs_\lambda := \ker ( \eu^{-it T_{\Ub_0}} S^* - \lambda \id) 
$$
satisfies $T_{\Ub_0}(\Vs_\lambda) \subset \Vs_\lambda$ and therefore $\eu^{it T_{\Ub_0}}(\Vs_\lambda) \subset \Vs_\lambda$ as well. Next, by going back to \eqref{eq:eigen_circle}, we infer that $S^* \Fb = \lambda \eu^{it T_{\Ub_0}} \Fb \in \Vs_\lambda$ showing that
$S^* \Fb \in \Vs_\lambda$. Hence $S^* \Fb$ is also an eigenfunction of $\eu^{-it T_{\Ub_0}} S^*$ with eigenvalue $\lambda$ and we deduce that $\Mean(S^* \Fb)=0$. By iteration, it follows that
$$
\Mean((S^*)^n \Fb) = \widehat{\Fb}_n = 0 \quad \mbox{for $n=0,1,2,\ldots$}
$$
But this shows that $\Fb=0$, which is the desired contradiction.
\end{proof}

As a next step, we introduce the closed subspace $\Ks$ in $L^2_+$ by setting
\be \label{def:Ks}
\Ks := \Hs + \C \Pi \Ub_0 + \C \mathds{1}_d \quad \mbox{with} \quad \Hs = \ov{\ran (H^*_{\Ub_0})}.
\ee
Recall that $\Hs$ is the closed subspace introduced in Section \ref{sec:spec} above, which was found to be invariant under both $T_{\Ub_0}$ and $S^*$.  In general,  it is easy to see that $\Pi \Ub_0 \not \in \Hs$. This fact  forces us to introduce the slightly larger subspace $\Ks$ above. 

\begin{prop} \label{prop:Ks}
It holds that 
$$T_{\Ub_0}(\Ks) \subset \Ks, \quad S^*(\Ks) \subset \Ks, \quad \mbox{and} \quad \Pi \Ub_0 \in \Ks.
$$ 
Moreover, we have $\dim \Ks < \infty$ if and only if $\Ub_0 \in \mathcal{R}at(\T; \Gr_k(\C^d))$.
\end{prop}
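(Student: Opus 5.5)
The plan is to check the three assertions directly on the three pieces of $\Ks = \Hs + \C \Pi \Ub_0 + \C \mathds{1}_d$. We use only the invariance $T_{\Ub_0}(\Hs) \subset \Hs$ and $S^*(\Hs) \subset \Hs$ from the lemma on invariant subspaces, together with Lemma \ref{lem:kronecker}. First, $\Ks$ is closed, since it is the sum of the closed subspace $\Hs$ and a space of dimension at most two. The statement $\Pi \Ub_0 \in \Ks$ holds by definition. Throughout we use the adjoint formula $H^*_{\Ub_0} \Gb = \Pi(\Ub_0^* \Gb) = \Pi(\Ub_0 \Gb)$ for $\Gb \in L^2_-$, valid because $\Ub_0 = \Ub_0^*$. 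Hence every element of the form $\Pi(\Ub_0 \Gb)$ with $\Gb \in L^2_-$ lies in $\ran(H^*_{\Ub_0}) \subset \Hs$.

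For the invariance under $T_{\Ub_0}$, it suffices to treat the two extra generators, since $T_{\Ub_0}(\Hs) \subset \Hs$ is already known. For the first generator, $T_{\Ub_0}(\mathds{1}_d) = \Pi \Ub_0 \in \Ks$. For the second, we write $\Pi \Ub_0 = \Ub_0 - \Pi_- \Ub_0$ and use $\Ub_0^2 = \mathds{1}_d$ a.e. This gives
$$
T_{\Ub_0}(\Pi \Ub_0) = \Pi(\Ub_0^2) - \Pi(\Ub_0 \Pi_- \Ub_0) = \mathds{1}_d - H^*_{\Ub_0}(\Pi_- \Ub_0).
$$
Here $\Pi_- \Ub_0 \in L^2_-$ because $\Ub_0 \in L^\infty$. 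Therefore the last term lies in $\Hs$, and so $T_{\Ub_0}(\Pi \Ub_0) \in \Ks$.

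For the invariance under $S^*$, again only the extra generators need checking, since $S^*(\Hs) \subset \Hs$ is known. For the constant generator, $S^* \mathds{1}_d = 0$. For the other generator, we note that $S^* \Fb = \Pi(\eu^{-i\theta} \Fb)$ for $\Fb \in L^2_+$, and that $\Pi(\eu^{-i\theta}\Pi_-\Ub_0) = 0$. Together these give
$$
S^* \Pi \Ub_0 = \Pi(\eu^{-i\theta} \Ub_0) = \Pi\big(\Ub_0 \, (\eu^{-i\theta}\mathds{1}_d)\big) = H^*_{\Ub_0}(\eu^{-i\theta} \mathds{1}_d) \in \Hs,
$$
since $\eu^{-i\theta}\mathds{1}_d \in L^2_-$. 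This is essentially the content of Lemma \ref{lem:commutator} applied with $\Fb = \mathds{1}_d$.

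Finally, $\Hs \subset \Ks$ and $\dim(\Ks/\Hs) \le 2$. Hence $\dim \Ks < \infty$ if and only if $\dim \Hs < \infty$, and by Lemma \ref{lem:kronecker} this holds if and only if $\Ub_0 \in \mathcal{R}at(\T; \Gr_k(\C^d))$.

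I expect no serious obstacle. The only point needing care is recognizing that both $\Pi(\Ub_0 \Pi_- \Ub_0)$ and $S^*\Pi\Ub_0$ lie in the range of $H^*_{\Ub_0}$. This relies on the matrices multiplying on the left and on $\Ub_0$ being self-adjoint.
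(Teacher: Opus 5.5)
Your proposal is correct and follows essentially the same route as the paper: you check $T_{\Ub_0}$- and $S^*$-invariance on $\Hs$ and the two extra generators, using $T_{\Ub_0}(\Pi\Ub_0) = \mathds{1}_d - H^*_{\Ub_0}(\Pi_-\Ub_0)$ and $S^*\Pi\Ub_0 = H^*_{\Ub_0}(\eu^{-i\theta}\mathds{1}_d)$, and then reduce the dimension statement to Lemma~\ref{lem:kronecker} via $\dim(\Ks/\Hs)\le 2$. Your remarks on the closedness of $\Ks$ and on why $\Pi(\eu^{-i\theta}\Pi_-\Ub_0)=0$ only fill in details the paper leaves implicit.
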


\begin{proof}
From \eqref{def:Ks} it is evidently true that $\Pi \Ub_0$ belongs to $\Ks$. Since $S^*(\Hs) \subset \Hs$ and $S^*(\mathds{1}_d) = 0$ and $S^* \Pi \Ub_0 = H^*_{\Ub_0}(\eu^{-i \theta}) \in \ran(H^*_{\Ub_0}) \subset \Ks$, we deduce that $\Ks$ is invariant under $S^*$. Furthermore, we note that $T_{\Ub_0}(\Hs) \subset \Hs$ and $T_{\Ub_0}(\mathds{1}_d) = \Pi \Ub_0 \in \Ks$ as well as
\begin{align*}
T_{\Ub_0}(\Pi \Ub_0) & = \Pi( \Ub_0 \Pi \Ub_0) = \Ub_0^2 - \Pi (\Ub_0( \id - \Pi) \Ub_0) \\
& = \mathds{1}_d - H^*_{\Ub_0}(\Pi_- \Ub_0) \in \C \mathds{1}_d + \Hs \subset \Ks.
\end{align*}
This proves that $T_{\Ub_0}(\Ks) \subset \Ks$. 

Finally, it is clear that $\dim \Ks < \infty$ if and only if $\dim \Hs < \infty$. But the latter statement is equivalent to $\Ub_0 \in \mathcal{R}at(\T; \Gr_k(\C^d))$ thanks to Lemma \ref{lem:kronecker}.
\end{proof}

By Proposition \ref{prop:Ks}, the subspace $\Ks$ is invariant under $\eu^{-it T_{\Ub_0}} S^*$. Because of $\Pi \Ub_0 \in \Ks$, it therefore suffices to consider the restrictions
$$
\eu^{-itT_{\Ub_0}} S^* |_{\Ks} \quad \mbox{and} \quad (\id - z \eu^{-it T_{\Ub_0}} S^*)^{-1} |_{\Ks} \quad \mbox{for $z \in \D$}
$$
in the expression for $\Us(t)$ introduced above when applied to $\Pi \Ub_0$. Moreover, for rational initial data $\Ub_0$, the subspace $\Ks$ is finite-dimensional and our analysis simplifies substantially, since the spectral result in Lemma \ref{lem:sigma_p} turns out to be sufficient for obtaining a-priori bounds on all $H^s$-norms uniformly in time. In fact, provided that $\Ub_0$ is rational, we can deduce that the map $t \mapsto \Us(t) \Ub_0$ is {\em quasi-periodic} in $t \in \R$ yielding a-priori bounds on any Sobolev norm. 

\begin{lem} \label{lem:quasi_periodic}
Let $\Ub_0 \in \mathcal{R}at(\T; \Gr_k(\C^d))$. Then the map $t \mapsto \Us(t) \Ub_0$ is \textbf{quasi-periodic} with respect to $t \in \R$ in the sense that
$$
(\Us(t) \Ub_0)(z) = \Gb(t \bm{\lambda}, z) \quad \mbox{for $(t, z) \in \R \times \D$},
$$ 
with some integer $N \geq 1$, some constant $\bm{\lambda}=(\lambda_1, \ldots, \lambda_N) \in \R^N$, and some bounded and continuous map $\Gb : \T^N \times \ov{\D} \to \C^{d \times d}$. In particular, we have the a-priori bounds
$$
\sup_{t \in \R} \| \Us(t) \Ub_0 \|_{H^s(\T)} \lesssim_{s,\Ub_0} 1 \quad \mbox{for any $s \geq 0$}.
$$
\end{lem}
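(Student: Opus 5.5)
We work entirely in the finite-dimensional space $\Ks$ of Proposition~\ref{prop:Ks}. Note that $\Us(t)\Ub_0$ should be read as $\Us(t)\Pi\Ub_0$, with $\Pi\Ub_0 \in \Ks$. Since $\Ub_0$ is rational, $\dim \Ks = M < \infty$, and $\Ks$ is invariant under $T_{\Ub_0}$ and $S^*$. The restriction $T_{\Ub_0}|_\Ks$ is self-adjoint on $\Ks$, because an invariant subspace of a self-adjoint operator is reducing. Hence it has an orthonormal eigenbasis with eigenvalues $\mu_1,\dots,\mu_N \in [-1,1]$ (the distinct ones). We then write
$$
\eu^{-itT_{\Ub_0}}|_\Ks = \sum_{j=1}^N \eu^{-it\mu_j} P_j ,
$$
with $P_j$ the spectral projections inside $\Ks$. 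We set $\bm{\lambda} = -(\mu_1,\dots,\mu_N)$ and, for $\bm{\phi} \in \T^N$, define the operator
$$
A(\bm{\phi}) := \Big(\sum_j \eu^{i\phi_j} P_j\Big) S^*|_\Ks \in \Ls(\Ks).
$$
This gives $A(t\bm{\lambda}) = \eu^{-itT_{\Ub_0}}S^*|_\Ks$, and $A$ depends continuously (indeed real-analytically) on $\bm{\phi}$. The candidate map is
$$
\Gb(\bm{\phi},z) := \Mean\big((\id - zA(\bm{\phi}))^{-1}\Pi\Ub_0\big),
$$
which by \eqref{def:Us} gives $(\Us(t)\Pi\Ub_0)(z) = \Gb(t\bm{\lambda},z)$ for $z \in \D$.

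The key step is to show that $\Gb$ extends continuously to $\T^N \times \ov{\D}$ with uniform control, i.e.\ that $\id - zA(\bm{\phi})$ is invertible for all $|z| \le 1$ and all $\bm{\phi} \in \T^N$. Since $\|A(\bm{\phi})\| \le 1$, the only obstruction is an eigenvalue of $A(\bm{\phi})$ on $\pt\D$. Lemma~\ref{lem:sigma_p} excludes this only for $\bm{\phi}$ on the line $t\bm{\lambda}$, not on all of $\T^N$; this is the main obstacle.

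The plan is to rerun the proof of Lemma~\ref{lem:sigma_p} with $\eu^{-itT_{\Ub_0}}$ replaced by the unitary $W(\bm{\phi}) = \sum_j \eu^{i\phi_j}P_j$ on $\Ks$. That argument used only three facts: that the operator is unitary, that it commutes with $T_{\Ub_0}$, and that $T_{\Ub_0}$ is a function of it. The first two hold for $W(\bm{\phi})$. The third is where care is needed, but the proof actually only needs the eigenspace $\Vs_\lambda$ to be invariant under $W(\bm{\phi})^{-1} = W(-\bm{\phi})$. Invariance of $\Vs_\lambda$ under $T_{\Ub_0}$ gives invariance under each $P_j$, since $P_j$ is a polynomial in $T_{\Ub_0}|_\Ks$, and hence under $W(-\bm{\phi})$. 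The iteration then yields $\widehat{\Fb}_n = 0$ for all $n$, so $\sp(A(\bm{\phi})) \cap \pt\D = \emptyset$ for every $\bm{\phi}$.

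With this in hand, the map $(\bm{\phi},z) \mapsto (\id - zA(\bm{\phi}))^{-1}$ is continuous on the compact set $\T^N \times \{|z| \le 1+\delta\}$ for some $\delta > 0$. Here we use finite dimensionality, continuity of the spectrum, and compactness: the spectral radius of $A(\bm{\phi})$ is strictly less than $1$ uniformly in $\bm{\phi}$. The spectral radius is at most $1$ because $\|A(\bm{\phi})\| \le 1$, and we have just excluded eigenvalues on the circle.

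Consequently, $\Gb(\bm{\phi},\cdot)$ is holomorphic on the disk $|z| < 1+\delta$ with a uniform bound. By Cauchy estimates its Taylor coefficients satisfy $|\widehat{\Gb}_n| \lesssim (1+\delta)^{-n}$ uniformly in $\bm{\phi}$, which gives uniform $H^s$ bounds for every $s \geq 0$. This proves the claimed a-priori bounds for $\Us(t)\Pi\Ub_0$, and hence for $\Ub(t) = \Us(t)\Pi\Ub_0 + (\Us(t)\Pi\Ub_0)^* - \Mean(\Ub_0)$.
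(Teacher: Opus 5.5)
Your proposal is correct and follows essentially the same route as the paper. Both arguments restrict to the finite-dimensional invariant space $\Ks$ and replace $\eu^{-itT_{\Ub_0}}|_\Ks$ by the torus family $\sum_j \eu^{i\phi_j}P_j$. Both then extend Lemma~\ref{lem:sigma_p} to every point of $\T^N$ and use compactness to get a uniform spectral radius $r<1$, hence uniform $H^s$ bounds.

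Your observation that $\Vs_\lambda$ is invariant under each $P_j$, because $P_j$ is a polynomial in $T_{\Ub_0}|_\Ks$, is exactly the detail behind the paper's ``straightforward adaptation''. Your Cauchy-estimate step is an equivalent substitute for the paper's remark that $\Gb(\bm{\omega},\cdot)$ is rational with poles in $\{|z|\geq r^{-1}\}$.
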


\begin{remarks*}
1) The real numbers $\{ \lambda_k \}_{k =1}^N$ are the eigenvalues of the restriction of $T_{\Ub_0} |_\Ks$ onto the finite-dimensional invariant subspace $\Ks$.

2) Notice that the map $t \mapsto \Us(t) \Ub_0$ is {\em periodic} in $t \in \R$ if and only if all $\lambda_1, \ldots, \lambda_N$ lie on a line in $\mathbb{Q}$.
\end{remarks*}

\begin{proof}
Let $\Ub_0 \in \mathcal{R}at(\T; \Gr_k(\C^d))$ be given. Recall that $\dim \Ks < \infty$  by Proposition \ref{prop:Ks}. Since $T_{\Ub_0}(\Ks) \subset \Ks$ and by self-adjointness of $T_{\Ub_0}$, we can write
$$
T_{\Ub_0} |_\Ks = \sum_{n=1}^N \lambda_n P_n  \quad \mbox{for some $1 \leq N \leq \dim \Ks$},
$$
where $\lambda_n \in \R$ are the eigenvalues of $T_{\Ub_0} |_\Ks$ and $P_n : \Ks \to \Ks$ denote the corresponding orthogonal projections onto the eigenspaces $E_n = \ker(T_{\Ub_0}|_\Ks - \lambda_n \id )$ which satisfy $E_n \perp E_m$ if $n \neq m$.

For $\bm{\omega}=(\omega_1, \ldots, \omega_N) \in \T^N$, let us introduce the linear map 
$$
 \Om(\bm{\omega}) := \sum_{n=1}^N \eu^{-i \omega_n} P_n : \Ks \to \Ks.
$$
Note that $\Om(\bm{\omega})$ is unitary with $\Om(\bm{\omega})^* = \Om(-\bm{\omega})$ and $\Omega(\bm{\omega})$ commutes with $T_{\Ub_0}|_{\Ks}$. By a straightforward adaption of the proof of Lemma \ref{lem:sigma_p} and noticing that $S^*(\Ks) \subset \Ks$, we infer that
$$
\sigma( \Om(\bm{\omega}) S^* |_{\Ks}) \cap \pt \D = \emptyset \quad \mbox{for $\bm{\omega} \in \T^N$}.
$$
Note also that $\sigma( \Om(\bm{\omega}) S^* |_{\Ks})=\sp( \Om(\bm{\omega}) S^* |_{\Ks})$ since $\Ks$ is finite-dimensional. By compactness of $\T^N$, this implies that there is some constant $r < 1$ such that
\be \label{eq:sigma_omega}
\sigma(\Om(\bm{\omega}) S^* |_{\Ks} ) \subseteq \ov{\D}_r \quad \mbox{for $\bm{\omega} \in \T^N$},
\ee
where $\ov{\D}_r = \{ z \in \C : |z| \leq r \}$ is the closed disc with radius $r$. Next, we define the map $\Gb : \T^N \times \D \to \C^{d \times d}$ with
$$
\Gb(\bm{\omega}, z) := \Mean \left ( ( \id - z \Om(\bm{\omega}) S^* |_{\Ks})^{-1} \Pi \Ub_0 \right ).
$$
From \eqref{eq:sigma_omega} we deduce that, for each $\bm{\omega} \in \T^N$, the map $z \mapsto \Gb(\bm{\omega}, z)$ is a rational function in $z$ whose poles belong to the set $\{ z \in \C : z^{-1} \in \C \setminus \ov{\D}_r \} \subset \{ |z| \geq r^{-1} \}$. Thus $z \mapsto \Gb(\bm{\omega}, z)$ extends smoothly to the boundary $\pt \D$, which shows that $\Gb(\bm{\omega}, \cdot) \in H^s(\T; \C^{d \times d})$ for any $s \geq 0$. Since $\T^N$ is compact and by continuity of the $H^s$-norm of $\Gb(\bm{\omega}, \cdot)$ with respect to $\bm{\omega}$, we obtain
\be \label{ineq:apriori_quasi}
\sup_{\bm{\omega} \in \T^N} \| \Gb(\bm{\omega}, \cdot) \|_{H^s(\T)} \lesssim_{s,\Ub_0} 1 \quad \mbox{for any $s \geq 0$}.
\ee

Finally, we observe that 
$$
\eu^{-it T_{\Ub_0}} |_\Ks = \Omega(t \bm{\lambda})
$$
with the vector $\bm{\lambda}=(\lambda_1, \ldots, \lambda_n) \in \R^N$ containing the eigenvalues of $T_{\Ub_0} |_\Ks$ introduced above. Therefore,
$$
(\Us(t)\Ub_0)(z) = \Mean \left ( (\id-z \Omega(t \bm{\lambda}) S^*)^{-1} \Pi \Ub_0 \right ) = \Gb(t \bm{\lambda}, z) \quad \mbox{for $(t,z) \in \R \times \D$}.
$$
In view of the previous discussion of $\Gb$, we complete the proof of Lemma \ref{lem:quasi_periodic}.
\end{proof}

We are now ready to derive the main result of this section dealing with rational initial data for \eqref{eq:HWM}.

\begin{cor}[GWP for rational initial data] \label{cor:GWP_rational}
For every $\Ub_0 \in \mathcal{R}at(\T; \Gr_k(\C^d))$, there exists a unique global-in-time solution 
$$
\Ub \in C^\infty(\R \times \T)
$$
 of \eqref{eq:HWM} with initial datum $\Ub(0) = \Ub_0$. In addition, the following properties hold for all $t \in \R$:
\begin{itemize}
\item[(i)] \underline{A-priori bound on $H^s$}: It holds that
$$
\sup_{t \in \R} \| \Ub(t) \|_{H^s}  \lesssim_{s, \Ub_0} 1 \quad \mbox{for all $s \geq 0$}.
$$
\item[(ii)] \underline{Lax evolution}: There exists a unitary map $\Us(t)$ on $L^2_+$ such that
$$
T_{\Ub(t)} = \Us(t) T_{\Ub_0} \Us(t)^*,
$$
and we have the conservation laws
$$
\Mean(\Ub(t)) = \Mean(\Ub_0), \quad E[\Ub(t)] = E[\Ub_0] \quad \mbox{for all $t \in \R$}
$$
\item[(iii)] \underline{Explicit formula}: For any $z \in \D$ and $t \in \R$, it holds that
$$
\Pi \Ub(t,z) = (\Us(t)\Pi \Ub_0)(z) = \Mean \left ( (\id- z \eu^{-it T_{\Ub_0}}S^*)^{-1} \Pi \Ub_0 \right ).
$$
\item[(iv)] \underline{Preservation of rationality}: We have $\Ub(t) \in \mathcal{R}at(\T; \Gr_k(\C^d))$ for all $t \in \R$.
\end{itemize}
\end{cor}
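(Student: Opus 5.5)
The argument is a continuation one. Local well-posedness in $H^s$ for $s > \frac32$ (Appendix~\ref{app:lwp}) gives a solution $\Ub \in C(I; H^s)$ on its maximal interval $I \ni 0$. Applying this for every $s$, uniqueness gives $\Ub \in C(I; H^\infty)$, and the equation then gives smoothness in time. All identities of Section~\ref{sec:lax} hold on $I$. In particular:
\begin{itemize}
\item Lemma~\ref{lem:Lax} and Corollary~\ref{cor:Lax2} give $T_{\Ub(t)} = \Us(t) T_{\Ub_0} \Us(t)^*$ and $K_{\Ub(t)} = \Us(t) K_{\Ub_0} \Us(t)^*$ with $\Us(t)$ unitary.
\item Theorem~\ref{thm:explicit_smooth} identifies this propagator with the map defined in \eqref{def:Us}, and gives the explicit formula $\Pi \Ub(t) = \Us(t) \Pi \Ub_0$ for $t \in I$.
\end{itemize}
So (ii) and (iii) hold on $I$. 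Mean conservation follows from Lemma~\ref{lem:Us_mean} applied to $\Pi\Ub_0$, since $\Mean(\Pi \Ub) = \Mean(\Ub)$. Energy conservation follows from $E[\Ub(t)] = \Tr K_{\Ub(t)} = \Tr K_{\Ub_0}$ (Lemma~\ref{lem:key_identity}). Rationality (iv) follows because $\rank K_{\Ub(t)} = \rank K_{\Ub_0} < \infty$, so $\dim \ov{\ran(K_{\Ub(t)})} < \infty$, and Lemma~\ref{lem:kronecker} applies.

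The key step is to show $I = \R$. Suppose $\sup I = T^* < \infty$; the case of negative times is symmetric. By the blow-up alternative of the local theory, $\limsup_{t \uparrow T^*} \|\Ub(t)\|_{H^s} = \infty$ for some fixed $s > \frac32$. On the other hand, for $t \in I$ the explicit formula gives $\Pi \Ub(t) = \Us(t)\Pi\Ub_0$. Here the right side is the map of Lemma~\ref{lem:quasi_periodic}, which is defined for all $t\in\R$ without reference to a solution. I note that Lemma~\ref{lem:quasi_periodic} is stated for $\Us(t)\Ub_0$ but is proved with $\Pi \Ub_0$, which is what we need. That lemma bounds $\sup_{t\in\R}\|\Us(t)\Pi\Ub_0\|_{H^s}$ uniformly. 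Since $\Ub = \Ub^*$, we can recover $\Ub$ as
$$
\Ub(t) = \Pi\Ub(t) + (\Pi\Ub(t))^* - \Mean(\Ub_0),
$$
which yields $\sup_{t \in I}\|\Ub(t)\|_{H^s} \lesssim_{s,\Ub_0} 1$. This contradicts blow-up, so $I=\R$ and (i) holds.

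The main subtlety is exactly this step: the a-priori bound must come from an object defined independently of existence. Lemma~\ref{lem:quasi_periodic} provides this: it rests on Lemma~\ref{lem:sigma_p}, which is proved directly without a solution, together with finite-dimensionality of $\Ks$ from Proposition~\ref{prop:Ks}. It also matters that the local existence time depends only on a bound for the $H^s$ norm. Uniqueness comes from the $H^s$ uniqueness of the local theory: any $C^\infty$ solution is in particular in $C(\R;H^s)$.
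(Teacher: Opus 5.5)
Your proposal is correct and follows essentially the same route as the paper: the maximal $H^s$ solution from Lemma~\ref{lem:lwp}, the explicit formula of Theorem~\ref{thm:explicit_smooth} on $I$, the uniform bound from Lemma~\ref{lem:quasi_periodic} (which, as you rightly note, is really a statement about $\Us(t)\Pi\Ub_0$) combined with the blow-up alternative to get $I=\R$, and then rank preservation of $K_{\Ub(t)}$ with Lemma~\ref{lem:kronecker} for rationality. You also fill in two steps the paper leaves implicit: recovering $\Ub(t)$ from $\Pi\Ub(t)$ via $\Ub=\Ub^*$, and deriving mean conservation from Lemma~\ref{lem:Us_mean}.
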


\begin{proof}
Suppose that $\Ub_0 \in \mathcal{R}at(\T; \Gr_k(\C^d))$ let $\Ub \in C(I; H^s(\T;\Gr_k(\C^d)))$ denote be the corresponding maximal  solution given by the local well-posedness result in Lemma \ref{lem:lwp} for initial data in $H^s$ with $s>\frac{3}{2}$. From Theorem \ref{thm:explicit_smooth} we infer that $\Pi \Ub(t,z) = \Phi(t)(\Ub_0)(z)$ is given by the explicit formula for $t \in I$ and $z \in \D$. But in view of Lemma \ref{lem:quasi_periodic} we deduce that
$$
\sup_{t \in I} \| \Pi \Ub(t) \|_{H^s(\T)} < +\infty,
$$
which implies that $I = \R$ must hold.

Finally, we remark that items (i)--(iii) directly follow from Lemma \ref{lem:Lax}, Corollary \ref{cor:Lax2}, Theorem \ref{thm:explicit_smooth}, and Lemma \ref{lem:quasi_periodic}. To see that (iv) holds true, we notice that $\rank(K_{\Ub(t)}) = \rank(K_{\Ub_0})$ by the Lax evolution in (i) and we have $\rank(K_\Ub) < \infty$ if and only if $\Ub \in \mathcal{R}at(\T; \Gr_k(\C^d))$ in view of Lemma \ref{lem:kronecker}.  
\end{proof}

\section{Stability principle for explicit formulae}

In this section, we study the explicit formula from more general operator theoretic perspective. Our key result will be a general {\em stability principle} formulated in Theorem \ref{thm:stony} below, which can be applied to completely integrable PDEs in Hardy spaces $L^2_+(\T;E)$ that feature explicit formulae such as the Benjamin--Ono equation, Calogero--Moser DNLS, and half-wave maps equation posed on $\T$. In our companion work, we develop a corresponding stability principle for explicit formulae involving the Hardy spaces $L^2_+(\R;E)$, which arises for the corresponding completely integrable PDEs on the real line.

The main ideas developed in this section will show a close connection to the celebrated {\em Wold decomposition} for isometries on Hilbert spaces; see e.g.~\cite{SzFoBeKe-10, Ni-86}. However, our presentation will be self-contained. Thus it is (hopefully) accessible to readers without any further operator-theoretic background. In particular, we will not take the Wold decomposition as given, but we can rather derive this result as a corollary below.

\subsection{Main result of this section}
The goal of this subsection is to derive a general result for explicit formulae that arise for the half-wave maps equation on $\T$ as well as the Benjamin--Ono equation and the Calogero--Moser DNLS on $\T$, together with their corresponding zero-dispersion limits. 

We consider the following general setting. Assume that $E$ is a complex Hilbert space and let $H=L^2_+(\T;E)$ be the corresponding $L^2$-based Hardy space of holomorphic functions on $\D$ with values in $E$. As usual, we use $S$ and $S^*$ to denote the forward and backward shift on $H$, respectively.

Next, we suppose that $L : \dom(L) \subset H \to H$ is a (possibly unbounded) self-adjoint operator. We let $\{ \eu^{-it L} \}_{t \in \R}$ denote the strongly continuous one-parameter unitary group on $H$ generated by $L$.  Finally, for $t \in \R$, we define the linear map
$$
\Us(t) : L^2_+(\T;E) \to L^2_+(\T;E)
$$
given by
$$
(\Us(t) \Fb)(z) = \Mean \left ( (\id-z \eu^{-itL} S^*)^{-1} \Fb \right )
$$
with $\Fb \in L^2_+(\T;E)$ and $z \in \D$. Recall that $\Mean(\Gb) = \frac{1}{2 \pi} \int_0^{2 \pi} \Gb  = \widehat{\Gb}_0$ denotes the mean, which corresponds to the orthogonal projection in $L^2_+(\T;E)$ onto the subspace constant functions on $\T$ valued in $E$. We have the following examples.

\begin{itemize}
\item (HWM) on $\T$: $E=\C^{d \times d}$ and $L=T_{\Ub_0}$.
\item (CS-DNLS) on $\T$: $E= \C$ and $L=D \pm T_{u_0} T_{u_0}$.
\item (BO) on $\T$: $E= \C$ and $L=D - T_{u_0}$. 
\item Zero-dispersion limit of (BO) on $\T$: $E=\C$ and $L=T_{u_0}$.
\end{itemize}

By the general discussion in Subsection \ref{subsec:prelim} below, it readily follows that $\Us(t)$ is always a contraction, i.e., its operator norm is $\| \Us(t) \| \leq 1$ for all $t\in \R$. The following general result now completely identifies the case when $\Us(t)$ is {\em unitary}. This insight will turn out to be essential for our analysis of (HWM) with data in $H^{1/2}$.
\begin{thm}[Stability principle] \label{thm:stony}
The map $\Us(t) : L^2_+(\T;E) \to L^2_+(\T;E)$ is unitary for all $t \in \R$ if and only if one of the following equivalent conditions hold.
\begin{enumerate}
\item[(i)] $\ker \, \Us(t) = \{ 0 \}$ for all $t \in \R$.
\item[(ii)] $\displaystyle \lim_{j \to \infty} \| (\eu^{-it L} S^*)^j \Fb \| = 0$ for all $\Fb \in L^2_+(\T;E)$ and $t \in \R$.
\end{enumerate}
In this case, the map $t \mapsto \Us(t) \Fb$ belongs to $C(\R; L^2_+(\T;E))$ for any $\Fb \in L^2_+(\T;E)$.
\end{thm}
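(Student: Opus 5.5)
Fix $t$ and write $A := \eu^{-itL} S^*$, a contraction on $H = L^2_+(\T;E)$. Since $\eu^{-itL}$ is unitary, $A^*A = S S^* = \id - \Mean$ by \eqref{eq:Sdefect}, so $\|\Fb\|^2 - \|A\Fb\|^2 = |\Mean(\Fb)|_E^2$. The plan is to identify $\Us(t)$ as the isometric "characteristic" map attached to $A$ and $\Mean$. Expanding the resolvent as a geometric series gives
$$
(\Us(t)\Fb)(z) = \sum_{j \geq 0} z^j \, \Mean(A^j \Fb),
$$
so the $j$-th Fourier coefficient of $\Us(t)\Fb$ is $\Mean(A^j\Fb)$. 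Telescoping the defect identity yields
$$
\sum_{j=0}^{n-1} |\Mean(A^j \Fb)|_E^2 = \|\Fb\|^2 - \|A^n \Fb\|^2 .
$$
Since $\|A^n\Fb\|$ is nonincreasing, its limit exists, and we get $\|\Us(t)\Fb\|^2 = \|\Fb\|^2 - \lim_n \|A^n\Fb\|^2$. In particular $\Us(t)$ is a contraction, and $\Us(t)$ is an isometry if and only if (ii) holds at this $t$.

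Next I show that $\Us(t)$ always has dense range (indeed it is a co-isometry), so that isometry is equivalent to unitarity. Because $\Mean(A^j S\Gb) = \Mean(A^{j-1}\eu^{-itL}\Gb)$ for $j \geq 1$ and $\Mean(AS\,\cdot)$ reduces correctly, we have the intertwining $S^*\Us(t) = \Us(t) A$. Then $S^* \Us(t)\Us(t)^* S = \Us(t) A A^* \Us(t)^* = \Us(t)\Us(t)^*$, since $AA^* = \eu^{-itL}S^*S\eu^{itL} = \id$. So $P := \Us(t)\Us(t)^*$ is a positive contraction with $S^*PS = P$, i.e.\ a Toeplitz-type operator. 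Moreover, testing on constants, $\Us(t)\Eb = \Eb$ for $\Eb \in E$ (as $\Mean(A^j\Eb) = 0$ for $j \geq 1$), and $\Us(t)$ is a contraction, so $\Us(t)^*\Eb = \Eb$ and hence $P\Eb = \Eb$. An operator $P$ with $S^*PS = P$ is determined by the values $\langle P S^m\Eb \mid S^n \Eb'\rangle$, which reduce to $\langle P \Eb \mid \Eb' \rangle$ or to $\langle P S^k \Eb \mid \Eb' \rangle$. These last quantities vanish: $P S^k\Eb$ has its $\Mean$ controlled by $\langle S^k\Eb \mid P\Eb'\rangle = \langle S^k \Eb \mid \Eb' \rangle = 0$ for $k \geq 1$. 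Hence $P = \id$, so $\Us(t)$ is a co-isometry. Therefore $\Us(t)$ is unitary $\iff$ it is injective $\iff$ it is isometric $\iff$ (ii) holds. Injectivity of a co-isometry gives unitarity because $\Us^*$ is then an isometry with range $(\ker\Us)^\perp = H$. This establishes the equivalence of unitarity, (i) and (ii) for each fixed $t$, hence also for all $t$.

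For continuity, the Fourier coefficients $\Mean(A_t^j\Fb)$ are continuous in $t$ by strong continuity of $\eu^{-itL}$, so $t \mapsto \Us(t)\Fb$ is weakly continuous on the bounded set where $\|\Us(t)\Fb\| = \|\Fb\|$. Weak continuity together with constant norm gives strong continuity.

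The step I expect to be delicate is the co-isometry claim, namely verifying $\Us(t)\Us(t)^* = \id$ without assuming (ii). The bookkeeping there, $S^*PS = P$ together with $P$ fixing constants, must be done carefully. As an alternative, I would compute $\Us(t)^*$ explicitly as $\Us(t)^*\Gb = \sum_j (A^*)^j \widehat{\Gb}_j$, using $A^* = S\eu^{itL}$, and check directly that $\Us(t)\Us(t)^*\Gb$ has $n$-th coefficient $\sum_j \Mean(A^n (A^*)^j \widehat{\Gb}_j) = \widehat{\Gb}_n$. This follows from $AA^* = \id$ and $\Mean (A^*)^m \Eb = 0$ for $m \geq 1$.
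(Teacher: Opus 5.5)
Your proof is correct, but the key unitarity step is organized differently from the paper's.

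\textbf{The paper's route.} The paper writes $\Us(t)=\Us_\Sigma$ with the isometry $\Sigma=S\eu^{itL}$ and introduces the subspace $\Xs=\ov{\mathrm{lin}}\,\{\Sigma^jE\}_{j\ge0}$. It shows that $\Us_\Sigma$ maps $\Xs$ isometrically onto $L^2_+(\T;E)$. Isometry holds because $(\Sigma^*)^{m+1}$ annihilates $\sum_{j\le m}\Sigma^j\Eb_j$, and surjectivity because $\Us_\Sigma(\Sigma^jE)=S^jE$. It also shows that $\Xs^\perp\subset\ker\Us_\Sigma$, and only then invokes injectivity to conclude $\Xs=H$.

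\textbf{Your route.} You prove unconditionally that $\Us(t)\Us(t)^*=\id$. First, $S^*PS=P$ for $P=\Us(t)\Us(t)^*$, using $S^*\Us(t)=\Us(t)A$ and $AA^*=\id$. Second, $P\Eb=\Eb$, because a contraction fixing a vector is fixed by its adjoint. Testing on $S^m\Eb$, $S^n\Eb'$ then forces $P=\id$. Unitarity reduces to the abstract fact that an injective co-isometry is unitary. The equivalence with (ii) comes from the same telescoping norm identity the paper uses.

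\textbf{Comparison.} Your argument is shorter. It makes explicit that $\Us(t)$ is always a partial isometry with full final space, so the only obstruction to unitarity is a kernel. By your norm identity, that kernel is exactly $\{\Fb:\|A^j\Fb\|=\|\Fb\|\ \forall j\}$. The paper's argument instead exhibits the stable subspace $\Xs$ explicitly and yields the Wold decomposition as a by-product. Your fallback computation $\Us(t)^*\Gb=\sum_j(A^*)^j\widehat{\Gb}_j$ is the paper's argument in adjoint form. Here $A^*=\Sigma$, the subspaces $\Sigma^jE$ are mutually orthogonal, and $\Us(t)^*$ is the isometry of $L^2_+(\T;E)$ onto $\Xs$ that inverts $\Us(t)|_{\Xs}$.

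\textbf{Continuity.} Your coefficient-wise argument (continuity of each $\Mean(A_t^j\Fb)$, uniform bound, then constant norm) replaces the paper's weak-convergence lemma, which uses the resolvent difference on $\D_{1/2}$ plus holomorphy. Both work.

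\textbf{A small slip.} Your stated justification for $S^*\Us(t)=\Us(t)A$ (via $\Mean(A^jS\Gb)$) is beside the point. The identity follows at once from the coefficient formula: the $j$-th coefficient of $S^*\Us(t)\Fb$ is $\Mean(A^{j+1}\Fb)=\Mean(A^j(A\Fb))$.
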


Before we turn to the proof of this theorem, we will first derive some preliminary results in the next subsection.

\subsection{Preliminaries for the proof}
\label{subsec:prelim}
Let $H$ be a complex Hilbert space (not necessarily separable) with inner product and norm denoted by $\langle \cdot | \cdot \rangle$ and $\| \cdot \|$, respectively. In the following discussion, we always assume that
$$
\mbox{$\Sigma : H \to H$ is an {\em isometry}},
$$
which by definition means that $\| \Sigma \Fb \| = \| \Fb \|$ for each $\Fb \in H$. In fact, the latter condition is easily seen to be equivalent to
\be \label{eq:Sigma_isom}
\Sigma^* \Sigma = \id,
\ee
where $\id$ denotes the identity on $H$. Let us define $P_\Sigma \in \Ls(H)$ by setting
$$
P_\Sigma := \id - \Sigma \Sigma^*.
$$
In view of \eqref{eq:Sigma_isom}, we check that $P_\Sigma^* = P_\Sigma = P_\Sigma^2$ is an orthogonal projection on $H$ onto $E:= \ker \, \Sigma^*$, that is, we have
$$
E = \ker \, \Sigma^* = \ran \, P \quad \mbox{and} \quad E^\perp = \ran \, \Sigma = \ker \, P.
$$
Since $E \subset H$ is a closed subspace, it is a Hilbert space in its own right with norm $\| \Fb \|_E = \| \Fb \|$ for $\Fb \in E$. Let $L^2_+(\T;E)$ denote the corresponding Hardy space of $E$-valued holomorphic functions $\Gb: \D \to E$ such that $\Gb= \sum_{n=0}^\infty \Gb_n z^n$ with $\Gb_n \in E$ and $\sum_{n=0}^\infty \| \Gb_n \|_E^2 < \infty$. To simplify our notation, we will sometimes write $L^2_+$ instead of $L^2_+(\T;E)$.

For the given isometry $\Sigma : H \to H$, let us now define the linear map $\Us_\Sigma : H \to L^2_+(\T,E)$ by setting
\be
\boxed{ (\Us_\Sigma \Fb )(z) := P_\Sigma \left ( (\id- z \Sigma^*)^{-1} \Fb \right )}
\ee
with $\Fb \in H$ and $z \in \D$. Since $\| \Sigma^* \|  = \| \Sigma \|=1$ and thus $\sigma(\Sigma^*) \subseteq \ov{\D}$, we notice that $\id-z \Sigma^*$ has a bounded inverse for all $z \in \D$. In fact, we have the following result.

\begin{lem} \label{lem:UsSigma}
The linear map $\Us_\Sigma : H \to L^2_+(\T;E)$ is a contraction. For all $\Fb \in H$, the following properties hold.
\begin{enumerate}
\item[(i)] We have the identity
$$
\| \Us_\Sigma \Fb \|_{L^2_+}^2 = \| \Fb \|^2 - \lim_{N \to \infty} \| (\Sigma^*)^N \Fb \|^2.
$$
\item[(ii)] $\Us_\Sigma(\Fb) = \Fb$ whenever $\Fb \in E$.
\item[(iii)] The following intertwining relations hold
$$
S \Us_\Sigma = \Us_\Sigma \Sigma \quad \mbox{and} \quad S^* \Us_\Sigma = \Us_{\Sigma} \Sigma^* \quad \mbox{on $H$}.
$$ 
\end{enumerate}
\end{lem}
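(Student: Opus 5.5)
The plan is to expand $(\id - z\Sigma^*)^{-1}$ in its Neumann series and read off the Taylor coefficients of $\Us_\Sigma \Fb$. For $z \in \D$ we have $(\id - z\Sigma^*)^{-1}\Fb = \sum_{n\ge 0} z^n (\Sigma^*)^n \Fb$, converging in $H$ since $\|\Sigma^*\| \le 1$. Hence
$$
(\Us_\Sigma \Fb)(z) = \sum_{n \ge 0} z^n \, P_\Sigma (\Sigma^*)^n \Fb ,
$$
so that the $n$-th coefficient is $\Gb_n := P_\Sigma (\Sigma^*)^n \Fb \in E$. Everything then reduces to identities for these coefficients.

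For (i), I would use a telescoping argument. Since $P_\Sigma = \id - \Sigma\Sigma^*$ is an orthogonal projection, for any $\Gb \in H$ we have $\|P_\Sigma \Gb\|^2 = \langle (\id - \Sigma\Sigma^*)\Gb | \Gb\rangle = \|\Gb\|^2 - \|\Sigma^*\Gb\|^2$. Applying this to $\Gb = (\Sigma^*)^n \Fb$ and summing over $n = 0, \ldots, N-1$ gives
$$
\sum_{n=0}^{N-1} \|\Gb_n\|^2 = \|\Fb\|^2 - \|(\Sigma^*)^N \Fb\|^2 .
$$
The sequence $\|(\Sigma^*)^N\Fb\|$ is nonincreasing, so its limit exists. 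Letting $N \to \infty$ shows that $\sum_n \|\Gb_n\|_E^2 < \infty$, so $\Us_\Sigma\Fb \in L^2_+(\T;E)$, and also yields the identity in (i). The contraction property follows at once. This step also justifies that the coefficientwise definition agrees with the Hardy-space identification from Section \ref{sec:prelim}.

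For (ii), take $\Fb \in E = \ker \Sigma^*$. All terms with $n \ge 1$ vanish, and $\Gb_0 = P_\Sigma \Fb = \Fb$. So $\Us_\Sigma \Fb$ is the constant function $\Fb$.

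For (iii), I compare coefficients. The coefficients of $\Us_\Sigma \Sigma^* \Fb$ are $P_\Sigma(\Sigma^*)^{n+1}\Fb = \Gb_{n+1}$, which are exactly the coefficients of $S^*\Us_\Sigma\Fb$. For $\Us_\Sigma \Sigma\Fb$, the $n$-th coefficient is $P_\Sigma (\Sigma^*)^n \Sigma \Fb$. For $n \ge 1$, the relation $\Sigma^*\Sigma = \id$ gives $P_\Sigma(\Sigma^*)^{n-1}\Fb = \Gb_{n-1}$. For $n = 0$ it is $P_\Sigma \Sigma \Fb = \Sigma\Fb - \Sigma\Sigma^*\Sigma\Fb = 0$. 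These are precisely the coefficients of $S\Us_\Sigma\Fb$.

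No step is a genuine obstacle. The only point needing care is the existence of the limit in (i) together with the resulting square-summability, and monotonicity settles it.
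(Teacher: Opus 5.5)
Your proposal is correct and follows the paper's proof essentially step by step. Both expand $(\id - z\Sigma^*)^{-1}$ in a Neumann series with coefficients $P_\Sigma(\Sigma^*)^n\Fb$. Both obtain (i) and the contraction property by telescoping $\|P_\Sigma\Gb\|^2=\|\Gb\|^2-\|\Sigma^*\Gb\|^2$. Both prove (ii) and (iii) by comparing coefficients, using $\Sigma^*\Sigma=\id$ and $P_\Sigma\Sigma=0$.
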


\begin{remark*}
Since $\| \Sigma^* \| = \| \Sigma \| = 1$, the non-negative sequence $\| \Fb \|_{L^2} \geq \| \Sigma^* \Fb \|_{L^2} \geq \| (\Sigma^*)^2 \Fb \|_{L^2} \geq \ldots \geq 0$ is monotone-decreasing. Hence the limit 
$$
\lim_{N \to \infty} \| (\Sigma^*)^N \Fb \| = \inf_{N \geq 0} \| (\Sigma^*)^N \Fb \|
$$ 
always exists for any $\Fb \in H$.
\end{remark*}

\begin{proof}
Let $\Fb \in H$ be fixed.  For any $z \in \D$, we can use the geometric series expansion to conclude
\be \label{eq:UsSigma_expansion}
(\Us_\Sigma \Fb)(z) = \sum_{n=0}^\infty (P_{\Sigma} (\Sigma^*)^n \Fb) z^n.
\ee
Note that the series is locally uniformly convergent in $\D$ because $\| P_\Sigma (\Sigma)^* \Fb \|_E \leq \| (\Sigma^*)^n \Fb \| \leq \| \Fb \|$ independent of $n$. Thus $\Us_\Sigma \Fb : \D \to E$ is holomorphic on $\D$ and we have $\Us_\Sigma \Fb \in L^2_+(\T,E)$ if and only if
$$
\| \Us_\Sigma \Fb \|_{L^2_+}^2 = \sum_{n=0}^\infty \| P_\Sigma (\Sigma^*)^n \Fb \|_E^2 < \infty.
$$
To see that the series is finite, we recall that $P_\Sigma = \id - \Sigma \Sigma^*$, which implies that
$$
\| P_\Sigma \Gb \|_E^2 = \| P_\Sigma \Gb \|^2 = \| \Gb \|^2 - \| \Sigma^* \Gb \|^2 \quad \mbox{for all $\Gb \in H$}.
$$
Applying this identity to $\Gb = (\Sigma^*)^n \Fb$ with $n \in \N$, we find
\be \label{eq:PSigma_1}
\| P_\Sigma (\Sigma^*)^n \Fb \|_E^2 = \| (\Sigma^*)^n \Fb \|^2  - \| (\Sigma^*)^{n+1} \Fb \|^2. 
\ee
Therefore, we obtain the telescopic sum
$$
\sum_{n=0}^{N-1} \| P_\Sigma (\Sigma^*)^n \Fb \|_E^2 = \| \Fb \|^2 - \|(\Sigma^*)^N \Fb \|^2 \quad \mbox{for $N \geq 1$}.
$$
This shows that $\| \Us_\Sigma \Fb \|_{L^2_+} \leq \| \Fb \|$ for all $\Fb \in H$. Hence the linear map $\Us_\Sigma : H \to L^2_+$ is well-defined and, in fact, it is a contraction. By passing to the limit $N \to \infty$, we deduce that
$$
\| \Us_\Sigma \Fb \|_{L^2_+}^2 = \| \Fb \|^2 - \lim_{N \to \infty} \| (\Sigma^*)^N \Fb \|^2.
$$ 
This proves item (i).

Next, we assume that $\Fb \in E = \ran \, P_\Sigma = \ker \, \Sigma^*$. Then \eqref{eq:UsSigma_expansion} immediately yields that $\Us_\Sigma \Fb = P_\Sigma(\Fb) = \Fb$ whenever $\Fb \in E$. This shows (ii). 

It remains to prove (iii). Recall that $S$ denotes the forward shift on $L^2_+(\T,E)$. For any $\Fb \in L^2_+(\T,E)$ and $z \in \D$, we thus find
\begin{align*}
S (\Us_\Sigma \Fb)(z) & = \sum_{n=0}^\infty P_\Sigma( (\Sigma^*)^n \Fb ) z^{n+1} = \sum_{n=1}^\infty P_\Sigma ( (\Sigma^*)^{n-1}  \Fb ) z^n \\
& = \sum_{n=1}^\infty P_\Sigma( (\Sigma^*)^n \Sigma \Fb ) z^n = \sum_{n=0}^\infty P_\Sigma((\Sigma^*)^n \Sigma \Fb) z^n = (\Us_\Sigma \Sigma \Fb)(z),
\end{align*}
where we used that $\Sigma^* \Sigma = \id$ (since $\Sigma$ is an isometry) and that $P_{\Sigma}( \Sigma \Fb) = 0$. Likewise, for the backward shift $S^*$ on $L^2_+(\T,E)$, we find
$$
S^* (\Us_\Sigma \Fb)(z) = \sum_{n=0}^\infty P_\Sigma((\Sigma^*)^{n+1} \Fb) z^n = \sum_{n=0}^\infty P_\Sigma ( (\Sigma^*)^n \Sigma^* \Fb) z^n =  \Us_\Om(\Sigma^* \Fb)(z).
$$
This shows (iii) and completes the proof of Lemma \ref{lem:UsSigma}.
\end{proof}

As a next step, we identify the case when $\Us_\Sigma$ happens to be a unitary map.

\begin{lem} \label{lem:UsSigma_unitary}
The map $\Us_\Sigma : H \to L^2_+(\T;E)$ is unitary if and only if one of the following equivalent conditions holds.
\begin{enumerate}
\item[(i)] $\ker \, \Us_\Sigma = \{ 0 \}$.
\item[(ii)] $\displaystyle \lim_{j \to \infty} \| (\Sigma^*)^j \Fb \| = 0$ for all $\Fb \in H$.
\end{enumerate}
In this case, we have the unitary equivalence
$$
\Us_\Sigma^* S \Us_\Sigma = \Sigma \quad \mbox{on $H$}.
$$
\end{lem}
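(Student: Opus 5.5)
The plan is to show that unitarity of $\Us_\Sigma$, condition (ii), and condition (i) are all equivalent to one structural fact: $\Us_\Sigma$ is surjective whenever it is an isometry. The identity in Lemma \ref{lem:UsSigma}(i) immediately gives that $\Us_\Sigma$ is an isometry if and only if (ii) holds. Since a unitary map is in particular injective, "unitary $\Rightarrow$ (i)" is trivial. So there are three implications left to prove: (ii) $\Rightarrow$ unitary, (i) $\Rightarrow$ (ii), and the final unitary equivalence.

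\textbf{Step 1: (ii) implies unitary.} Assume (ii), so $\Us_\Sigma$ is an isometry and its range is closed. It remains to show that the range is dense, i.e.\ contains every monomial $z^n e$ with $e \in E$. By Lemma \ref{lem:UsSigma}(ii) we have $\Us_\Sigma e = e$. By the intertwining relation $S\Us_\Sigma = \Us_\Sigma \Sigma$ from Lemma \ref{lem:UsSigma}(iii), we get
$$
z^n e = S^n \Us_\Sigma e = \Us_\Sigma(\Sigma^n e).
$$
Finite sums of such monomials are dense in $L^2_+(\T;E)$, so the closed range is everything and $\Us_\Sigma$ is unitary.

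\textbf{Step 2: (i) implies (ii).} This is the main point. Set
$$
H_\infty := \{ \Fb \in H : \lim_{N\to\infty} \|(\Sigma^*)^N \Fb\| = \|\Fb\| \}.
$$
By the identity in Lemma \ref{lem:UsSigma}(i), $H_\infty = \ker \Us_\Sigma$. To see this, first take $\Fb$ with $\Us_\Sigma \Fb = 0$; then each coefficient satisfies $\|(\Sigma^*)^n\Fb\|^2 - \|(\Sigma^*)^{n+1}\Fb\|^2 = \|P_\Sigma (\Sigma^*)^n \Fb\|^2 = 0$, so the norms never drop and $\Fb \in H_\infty$. Conversely, if $\Fb \in H_\infty$ then $\|\Us_\Sigma \Fb\|^2 = 0$.

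Next I will produce the decomposition $H = \ker\Us_\Sigma \oplus (\ker \Us_\Sigma)^\perp$, with $\Us_\Sigma$ isometric on the complement. My first attempt is to show that $\Us_\Sigma^*\Us_\Sigma$ is an orthogonal projection. Write $Q_N := \Sigma^N (\Sigma^*)^N$; these are orthogonal projections, since $\Sigma^N$ is an isometry. They are decreasing in $N$, because $\Sigma^{N+1}(\Sigma^*)^{N+1} = \Sigma^N Q_1 (\Sigma^*)^N \le Q_N$. Hence $Q_N \to Q_\infty$ strongly, with $Q_\infty$ an orthogonal projection. We also have $\|(\Sigma^*)^N \Fb\|^2 = \langle Q_N \Fb | \Fb\rangle$, so polarizing Lemma \ref{lem:UsSigma}(i) gives
$$
\Us_\Sigma^*\Us_\Sigma = \id - Q_\infty .
$$
Therefore $\Us_\Sigma$ is a partial isometry with kernel $\ran Q_\infty$. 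If (i) holds, then $Q_\infty = 0$, which is exactly (ii).

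\textbf{Step 3: the unitary equivalence.} Once $\Us_\Sigma$ is unitary, the relation $S\Us_\Sigma = \Us_\Sigma\Sigma$ immediately gives $\Us_\Sigma^* S \Us_\Sigma = \Sigma$.

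The only delicate step is the monotonicity of $Q_N$ and the polarization giving $\Us_\Sigma^*\Us_\Sigma = \id - Q_\infty$. This is the Wold decomposition in disguise: $\ran Q_\infty$ is the unitary part of $\Sigma$. Everything else is bookkeeping with Lemma \ref{lem:UsSigma}.
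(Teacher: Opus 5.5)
Your proof is correct, but it reaches the key implication by a different route than the paper.

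\textbf{The paper's route.} It proves (i) $\Rightarrow$ unitary directly, in three steps.
\begin{enumerate}
\item[1)] It introduces $\Xs = \ov{\mathrm{lin}}\,\{\Sigma^j E\}_{j \geq 0}$ and shows $\Us_\Sigma$ is isometric on $\Xs$, since finite combinations $\sum_{j\le m}\Sigma^j\Eb_j$ are annihilated by $(\Sigma^*)^{m+1}$.
\item[2)] It shows $\Us_\Sigma$ maps $\Xs$ onto $L^2_+(\T;E)$, using the monomials $z^jE=\Us_\Sigma(\Sigma^jE)$.
\item[3)] It shows $\Xs^\perp\subset\ker\Us_\Sigma$: if $\Fb\perp\Sigma^jE$ for all $j$, then every Taylor coefficient $P_\Sigma(\Sigma^*)^j\Fb$ vanishes.
\end{enumerate}
So (i) forces $H=\Xs$, and (ii) then follows from unitarity and the norm identity.

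\textbf{Your route.} You run the cycle unitary $\Rightarrow$ (i) $\Rightarrow$ (ii) $\Rightarrow$ unitary. For (ii) $\Rightarrow$ unitary you use the same monomial argument, but on all of $H$, since (ii) already makes $\Us_\Sigma$ a global isometry. For (i) $\Rightarrow$ (ii) you avoid constructing $\Xs$. Instead you use the decreasing projections $Q_N=\Sigma^N(\Sigma^*)^N$ onto $\ran\Sigma^N$ and the polarized identity $\Us_\Sigma^*\Us_\Sigma=\id-Q_\infty$.

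\textbf{What each buys.} Your version works for an arbitrary isometry, without assuming (i). It exhibits $\Us_\Sigma$ as a partial isometry with $\ker\Us_\Sigma=\ran Q_\infty=\bigcap_N\Sigma^N H$, which is the unitary part of the Wold decomposition. The paper's version identifies the initial space constructively as the closed span of the wandering subspaces $\Sigma^jE$. That is the description it reuses in the subsequent Wold remark, and it needs only finite-sum computations and density, not monotone convergence of projections and polarization. The two pictures are dual, since $(\ov{\mathrm{lin}}\,\{\Sigma^jE\}_{j\ge0})^\perp=\bigcap_N\Sigma^N H$.

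A minor point: your preliminary identification of $H_\infty$ with $\ker\Us_\Sigma$ is redundant once you have $\Us_\Sigma^*\Us_\Sigma=\id-Q_\infty$.
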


\begin{proof} 
We divide the proof into the following steps.

\medskip
\textbf{Step 1.} We show that $\Us_\Sigma : H \to L^2_+(\T;E)$ is unitary if and only if (i) holds. Clearly, the unitarity of $\Us_\Sigma$ implies (i). On the other hand, let us assume that $\ker \, \Us_\Sigma = \{ 0 \}$ is trivial. Define the closed subspace
$$
\Xs := \ov{\mathrm{lin}} \, \{ \Sigma^j E \}_{j=0}^\infty.
$$
We claim that $\Us_\Sigma$ is an isometry on $\Xs$, i.e.,
\be \label{eq:UsSigma_isom}
\| \Us_\Sigma \Fb \|_{L^2_+} = \| \Fb \| \quad \mbox{for all $\Fb \in \Xs$}.
\ee
To see this, take any finite linear combination $\Fb = \sum_{j=0}^m \Sigma^j \Eb_j$ with $\Eb_j \in E$. Then $(\Sigma^*)^{m+1} \Fb = \sum_{j=0}^m (\Sigma^*)^{m+1-j} \Eb_j = 0$ using that $\Sigma^* \Sigma = \id$ and $(\Sigma^*)^k E = 0$ for $k \geq 1$. Thus we see that $\lim_{N \to \infty} \| (\Sigma^*)^N \Fb \| \leq \| (\Sigma^*)^{m+1} \Fb \| = 0$. From the identity in Lemma \ref{lem:UsSigma} we deduce that \eqref{eq:UsSigma_isom} holds on $\mathrm{lin} \, \{ \Sigma^j E \}_{j=0}^\infty$, which by density extends to its closure $\Xs$. This proves \eqref{eq:UsSigma_isom}.

Next, we claim that $\Us_\Sigma : \Xs \to L^2_+(\T;E)$ is surjective. Since isometries have closed range, it suffices to show $\Us_\Sigma(\Xs)$ is dense in $L^2_+(\T;E)$. By Lemma \ref{lem:UsSigma}, we have $E = \Us_\Sigma(E)$ and, by iterating the intertwining identity, we obtain $S^j E = \Us_\Sigma(\Sigma^j E)$ for $j \geq 1$. Thus we obtain that
$$
S^j E = z^j E = \Us_\Sigma(\Sigma^j E) \subset \Us_\Sigma(\Xs) \quad \mbox{for all $j \geq 0$}.
$$
Since the linear span of $\{ z^j E \}_{j=0}^\infty$ is dense in $L^2_+(\T;E)$, we conclude that $\Us(\Xs) = L^2_+(\T;E)$. In summary, we have shown that 
$$
\mbox{$\Us_{\Sigma} : \Xs \to L^2_+(\T;E)$ is unitary}.
$$

It remains to show that 
$$
H = \Xs.
$$ 
Since $\Xs$ is closed, this follows if we show that $\Xs^\perp = \{ 0 \}$. Indeed, let $\Fb \in \Xs^\perp$ be given. Then we necessarily have that $\Fb \perp \Sigma^j E$ for $j \geq 0$. Hence $\langle \Fb | \Sigma^j E \rangle = \langle (\Sigma^*)^j \Fb | E \rangle = 0$ for $j \geq 0$, which means that $(\Sigma^*)^j \Fb \in \ker \, P_\Sigma = E^\perp$ for $j \geq 0$. But in view of \eqref{eq:UsSigma_expansion} this implies that $\Us_\Sigma(\Fb)=0$, whence it follows that $\Xs^\perp$ belongs to $\ker \, \Us_\Sigma$. Since $\ker \, \Us_\Sigma = \{ 0 \}$ by assumption, this implies that $\Xs^\perp$ is trivial.

This completes the proof that condition (i) is  also sufficient for $\Us_\Sigma : H \to L^2_+(\T;E)$ to be unitary.

\medskip
\textbf{Step 2.} The implication (ii) $\Rightarrow$ (i) is clear from Lemma \ref{lem:UsSigma}. Assume now that (i) holds. Then $\Us_\Sigma : H \to L^2_+(\T;E)$ is unitary by Step 1 above. By the identity in Lemma \ref{lem:UsSigma}, we deduce that (ii) must hold.

\medskip
\textbf{Step 3.} If $\Us_\Sigma : H \to L^2_+(\T;E)$ is unitary, we directly deduce that $\Us_\Sigma^* S \Us_\Sigma = \Sigma$ on $H$ from the intertwining relation in Lemma \ref{lem:UsSigma}.

\medskip
The proof of Lemma \ref{lem:UsSigma_unitary} is now complete.
\end{proof}

\begin{remark*}
For readers with an operator-theoretic background, we remark that a straightforward generalization of the previous proof yields the well-known {\em Wold decomposition} for isometries on Hilbert spaces. More precisely, by a slight extension of the proof above, we can deduce the unique orthogonal decomposition
$$
H = \Xu \oplus \Xs
$$
with the following properties:
\begin{itemize}
\item $\Xu = \ker \, \Us_\Sigma = \{ \Fb \in H : \mbox{$\| (\Sigma^*)^j \Fb = \| \Fb \|$ for $j \geq 0$} \}$
\item $\Xs = \ov{\mathrm{lin}} \, \{ \Sigma^j E \}_{j=0}^\infty = \{ \Fb \in H : \lim_{j \to \infty} \| (\Sigma^*)^j \Fb \| = 0 \}$.
\item $\Us_\Sigma : \Xs \to L^2_+(\T;E)$ is unitary and $\Us_\Sigma^* S \Us_\Sigma = \Sigma$ on $\Xs$. 
\end{itemize}
We readily check that $\Xu$ and $\Xs$ are invariant under $\Sigma$ and $\Sigma^*$, where $\Sigma |_{\Xu}$ is unitary and $\Sigma |_{\Xs}$ is unitarily equivalent to right shift $S$ on $L^2_+(\T;E)$, i.e., $\Sigma |_{\Xs}$ is completely non-unitary (c.n.u.). This is the well-known Wold decomposition for the isometry $\Sigma$ in $H$. 

Alternatively, we could have invoked the Wold decomposition for $\Sigma$ and have worked `backwards' to deduce Lemma \ref{lem:UsSigma_unitary} above. But since the map $\Us_\Sigma$ itself is our central object of interest and to keep the discussion self-contained, we have chosen not to take the Wold decomposition as given. \end{remark*}

For later use, we also derive the following convergence result. Note that we impose strong operator convergence on the adjoints.

\begin{lem} \label{lem:Us_Sigma_conv}
Let $\Sigma$ and $\Sigma_n$ with $n \in \N$ be isometries in $H$ such that $\ker \, \Sigma^*_n = \ker \, \Sigma^* = E$ for $n \in \N$. If $\Sigma_n^* \to \Sigma^*$ strongly, then $\Us_{\Sigma_n} \to \Us_{\Sigma}$ weakly.
\end{lem}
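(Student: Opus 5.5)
We need to show that for every $\Fb \in H$ and $\Gb \in L^2_+(\T;E)$ we have $\langle \Us_{\Sigma_n}\Fb \,|\, \Gb\rangle \to \langle \Us_\Sigma \Fb \,|\, \Gb\rangle$. The argument is an $\eps/3$ approximation: truncate the Taylor series of $\Gb$, use strong convergence on the finitely many remaining terms, and control the tails by the contraction property.

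\textbf{Step 1: uniform bound.} By Lemma \ref{lem:UsSigma}, all maps are contractions into the common space $L^2_+(\T;E)$; this uses $\ker \Sigma_n^* = E$ for every $n$. Hence
$$
\sup_n \| \Us_{\Sigma_n} \| \leq 1, \qquad \| \Us_\Sigma \| \leq 1 .
$$
Consequently it suffices to test against $\Gb$ in a dense subset of $L^2_+(\T;E)$. We take $\Gb = \sum_{k=0}^{K} z^k \Eb_k$, a finite linear combination with $\Eb_k \in E$.

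\textbf{Step 2: coefficientwise convergence.} By the expansion \eqref{eq:UsSigma_expansion},
$$
\langle \Us_{\Sigma_n}\Fb \,|\, \Gb\rangle = \sum_{k=0}^K \langle P_{\Sigma_n}(\Sigma_n^*)^k \Fb \,|\, \Eb_k\rangle_E .
$$
Every $P_{\Sigma_n}$ equals the orthogonal projection $P$ onto the fixed subspace $E$, again because $\ker \Sigma_n^* = E$. So $P_{\Sigma_n} = P_\Sigma = P$ for all $n$. Products of strongly convergent, uniformly bounded sequences converge strongly, and $\|\Sigma_n^*\| \leq 1$. Therefore $(\Sigma_n^*)^k \Fb \to (\Sigma^*)^k \Fb$ in $H$ for each fixed $k$, by induction on $k$ using
$$
(\Sigma_n^*)^{k+1}\Fb - (\Sigma^*)^{k+1}\Fb = \Sigma_n^*\big((\Sigma_n^*)^k\Fb - (\Sigma^*)^k\Fb\big) + (\Sigma_n^* - \Sigma^*)(\Sigma^*)^k\Fb .
$$
Hence each of the finitely many terms converges, and the pairing converges for such $\Gb$.

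\textbf{Step 3: density.} For general $\Gb$, choose a polynomial $\Gb_K$ with $\|\Gb - \Gb_K\| < \eps$. Then
$$
|\langle (\Us_{\Sigma_n} - \Us_\Sigma)\Fb \,|\, \Gb\rangle| \leq 2\eps\|\Fb\| + |\langle (\Us_{\Sigma_n} - \Us_\Sigma)\Fb \,|\, \Gb_K\rangle| ,
$$
and the last term tends to zero by Step 2. This gives weak operator convergence.

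No step is a real obstacle. The only point needing care is observing that the hypothesis $\ker\Sigma_n^* = E$ makes both the projections $P_{\Sigma_n}$ and the target Hardy space independent of $n$, so that the maps can be compared at all.
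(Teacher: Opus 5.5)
Your proof is correct, but it argues differently from the paper. The paper does not test against a dense set. It uses the bound $\|\Us_{\Sigma_n}\Fb\|\le\|\Fb\|$ to extract a weakly convergent subsequence $\Us_{\Sigma_n}\Fb\weakto\Gb$. It then identifies $\Gb$ through point evaluations, writing $(\Us_{\Sigma_n}\Fb)(z)-(\Us_\Sigma\Fb)(z)$ by the second resolvent identity as $P_\Sigma\bigl((\id-z\Sigma_n^*)^{-1}z(\Sigma^*-\Sigma_n^*)(\id-z\Sigma^*)^{-1}\Fb\bigr)$. This tends to $0$ in $E$ for each $|z|<1/2$, since strong convergence is applied only to the fixed vector $(\id-z\Sigma^*)^{-1}\Fb$. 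Holomorphy then gives $\Gb=\Us_\Sigma\Fb$ on all of $\D$, and uniqueness of the limit removes the subsequence.

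You instead test against the monomials $z^k\Eb$, so you work with Taylor coefficients rather than point values, and you replace the resolvent identity by strong convergence of the powers $(\Sigma_n^*)^k$. Your version is more elementary: it needs no weak compactness, no identity theorem and no subsequence. It also gives slightly more, namely norm convergence in $E$ of every coefficient $P(\Sigma_n^*)^k\Fb$. The paper's version works directly with the resolvent form of $\Us_\Sigma$ and never uses its power series. That form is arguably easier to carry over to settings where no Taylor expansion is available, such as the half-plane Hardy space of the companion work the paper mentions.
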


\begin{proof}
Since $\ker \, \Sigma^*_n = \ker \, \Sigma^* = E$, we note that $P_{\Sigma_n} = P_{\Sigma}$ for all $n \in \N$. Let $\Fb \in H$ be given. We need to show that
\be \label{eq:U_Om_weak}
\Us_{\Sigma_n} \Fb \weakto \Us_\Sigma \Fb \quad  \mbox{in $L^2_+(\T;E)$}.
\ee
Define the sequence $\Gb_n := \Us_{\Sigma_n} \Fb$ in $L^2_+(\T;E)$. Note that $\| \Gb_n \|_{L^2_+} \leq \| \Fb \|$ because $\Us_{\Sigma_n} : H \to L^2_+(\T;E)$ is a contraction. Up to passing to a subsequence, we can assume that $\Gb_n \weakto \Gb$ in $L^2_+(\T;E)$ with some limit $\Gb \in L^2_+(\T;E)$. We claim that 
$$
\Gb(z) = (\Us_{\Sigma} \Fb)(z)  \quad \mbox{for $z \in \D$}.
$$ 
By holomorphicity on $\D$, it suffices to show this claim on the smaller disk $\D_{1/2} = \{ z \in \C : |z| < 1/2 \}$. Indeed, we observe
\begin{align*}
(\Us_{\Sigma_n} \Fb)(z) - (\Us_\Sigma \Fb)(z) & = P_{\Sigma} \left ( (\id-z \Sigma_n^*)^{-1} \Fb - (\id - z \Sigma^*)^{-1} \Fb \right ) \\
& = P_\Sigma \left ( (\id- z \Sigma_n^*)^{-1} (z(\Sigma^*- \Sigma_n^*)) (\id-z \Sigma^*)^{-1} \Fb \right ).
\end{align*}
Since $\sigma(\Sigma_n^*) \subseteq \ov{\D}$, we see that $\| (\id -z \Sigma_n^*)^{-1} \| \leq \frac{1}{1/2} = 2$ for $z \in \D_{1/2}$. Hence, for any $z \in \D_{1/2}$, we find
$$
\| (\Us_{\Sigma_n} \Fb)(z) - (\Us_{\Sigma} \Fb)(z)\|_E \leq  \| (\Sigma^*- \Sigma_n^*) (\id-z \Sigma^*)^{-1} \Fb \| \to 0 \quad \mbox{as $n \to \infty$},
$$
using that $\Sigma^*_n \to \Sigma^*$ strongly as operators on $H$. This proves that the weak limit is given by $\Gb = \Us_\Sigma \Fb$. Since the limit is independent of the chosen subsequence, we obtain that \eqref{eq:U_Om_weak} holds.
\end{proof}

\subsection{Proof of Theorem \ref{thm:stony}}

We consider the Hardy space $H = L^2_+(\T;E)$, where $E$ is some complex Hilbert space. For $t \in \R$, we define the isometry 
$$
\Sigma(t) := S \eu^{it L} : H \to H.
$$ 
Since $\Sigma(t)^* = \eu^{-itL} S^*$ and $\eu^{-it L}$ is unitary, we readily check that $\ker \, \Sigma(t)^* = \ker \, S^* = E$ and that $P_{\Sigma(t)} = \id - \Sigma(t) \Sigma(t)^* = \id - S S^* = \Mean$ for all $t \in \R$.

Hence we can invoke Lemma \ref{lem:UsSigma_unitary} to infer that $\Us(t) : L^2_+(\T;E) \to L^2_+(\T;E)$ is unitary if and only if one the equivalent conditions (i) and (ii) in Theorem \ref{thm:stony} holds.

Suppose now that $\Us(t) : L^2_+(\T;E) \to L^2_+(\T;E)$ is unitary for all $t \in \R$. Let $\Fb \in L^2_+(\T; E)$ be given and consider a sequence of times $t_n \to t$ with some time $t \in \R$. By the strong continuity of the unitary group $\{ \eu^{-it L} \}_{t \in \R}$, we deduce that 
$$
\Sigma(t_n)^* = \eu^{-it_n L} S^* \to  \eu^{-it L} S^*= \Sigma(t)^*
$$ 
strongly as operators. By Lemma \ref{lem:Us_Sigma_conv}, we have that $\Us(t_n) \Fb \weakto \Us(t) \Fb$ in $L^2_+(\T;E)$. Since the maps $\Us(t_n)$ and $\Us(t)$ are unitary, we deduce that $\lim_{n \to \infty} \| \Us(t_n) \Fb \| = \|\Us(t) \Fb \| = \| \Fb \|$, which implies that $\Us(t_n) \Fb \to \Us(t) \Fb$ in $L^2_+(\T;E)$. Hence the map $t \mapsto \Us(t) \Fb$ belongs to $C(\R; L^2_+(\T;E))$.

This proof of Theorem \ref{thm:stony} is now complete. \hfill $\Box$

\section{Global well-posedness in $H^{1/2}$}
\label{sec:gwp}

The goal of this section is to prove global well-posedness for (HWM) as stated in Theorem \ref{thm:gwp}. We organize the discussion as follows.

\subsection{Extension to initial data in $H^{1/2}$} 
Let us fix integers $d \geq 2$ and $1 \leq k \leq d-1$. Suppose that $\Ub_0 \in H^{1/2}(\T; \Gr_k(\C^d))$ is an initial datum for \eqref{eq:HWM}.  By Theorem \ref{thm:dense}, there is a sequence of rational initial data $\Ub_{0,n} \in \mathcal{R}at(\T; \Gr_k(\C^d))$ such that 
\be \label{eq:Urat_conv}
\Ub_{0,n} \to \Ub_0 \quad \mbox{in} \quad H^{1/2}.
\ee
Let $\Ub_n = \Ub_n(t) \in C(\R; H^\infty)$ denote the unique global-in-time solutions of (HWM) with initial datum $\Ub_n(0) = \Ub_{0,n}$ as provided by Corollary \ref{cor:GWP_rational} above. We use the short-hand notation $L^2_+ = L^2_+(\T; \C^{d \times d})$ in what follows. For $t \in \R$, we consider the maps $\Us_n(t)  : L^2_+ \to L^2_+$ and $\Us(t) : L^2_+ \to L^2_+$ with
$$
(\Us_n(t) \Fb)(z) = \Mean \left ( ( \id - z \eu^{-it T_{\Ub_0,n}} S^*)^{-1} \Fb \right ), 
$$
$$ 
(\Us(t) \Fb)(z) = \Mean \left ((\id - z \eu^{-it T_{\Ub_0}} S^*)^{-1} \Fb \right ).
$$
We record the following facts valid for any $t \in \R$, sequences $t_n \to t$, and $n \in \N$:
\begin{enumerate}
\item[(P$_1$)] $\Us_n(t) : L^2_+ \to L^2_+$ is unitary.
\item[(P$_2$)] $\Pi \Ub_n(t) = \Us_n(t) \Pi \Ub_{0,n}$.
\item[(P$_3$)] $\eu^{-i t_n T_{\Ub_0,n}} S^*  \to  \eu^{-it T_{\Ub_0}} S^*$ in the strong operator topology.
\end{enumerate}
Note that (P$_1$) and (P$_2$) directly follow from Corollary \ref{cor:GWP_rational}. As for (P$_3$), we remark that the strong operator convergence $T_{\Ub_{0,n}} \to T_{\Ub_0}$ follows from \eqref{eq:Urat_conv} and  by dominated convergence. By self-adjointness of $T_{\Ub_0,n}$ and $T_{\Ub_0}$ together with $T_{\Ub_{0,n}} \to T_{\Ub_0}$ strongly and $t_n \to t$, we easily deduce (P$_3$).

We observe the following convergence properties.
\begin{lem} \label{lem:GWP_minor}
Let $\Ub_0 \in H^{1/2}(\T; \Gr_k(\C^d))$. Suppose that the sequence $\Ub_{0,n} \in \mathcal{R}at(\T; \Gr_k(\C^d))$ with $n \in \N$ satisfies $\Ub_{0,n} \to \Ub_0$ in $H^{1/2}$ and let $t_n \to t$. Then 
$$
\mbox{$\Us_n(t_n) \Pi \Ub_{0,n} \to \Us(t) \Pi \Ub_0$ in $L^2$} \quad \mbox{and} \quad \mbox{$\Us_n(t_n) \Pi \Ub_{0,n} \weakto \Us(t) \Pi \Ub_0$ in $H^{1/2}$}
$$
In particular, the limit $\Us(t) \Pi \Ub_0$ is independent of the chosen sequence of rational initial data $\Ub_{0,n} \in \mathcal{R}at(\T; \Gr_k(\C^d))$. 
\end{lem}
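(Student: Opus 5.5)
The plan is to combine a uniform $H^{1/2}$ bound, pointwise convergence on $\D$ coming from (P$_3$) via the resolvent argument of Lemma \ref{lem:Us_Sigma_conv}, and the compact embedding $H^{1/2}(\T)\hookrightarrow L^2(\T)$.

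\textbf{Uniform bound.} By (P$_2$), $\Gb_n := \Us_n(t_n)\Pi\Ub_{0,n} = \Pi \Ub_n(t_n)$. Energy conservation for rational solutions (Corollary \ref{cor:GWP_rational}) gives $\|\Pi\Ub_n(t_n)\|_{\dot H^{1/2}}^2 = \tfrac12 E[\Ub_{0,n}]$. Since $\Ub_n(t_n)$ takes values in $\Gr_k(\C^d)$, we also have $|\Ub_n|_E^2 = d$ pointwise, so $\|\Pi\Ub_n(t_n)\|_{L^2}^2 \le d$. Because $\Ub_{0,n}\to\Ub_0$ in $H^{1/2}$, these bounds make $(\Gb_n)$ bounded in $H^{1/2}_+$.

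\textbf{Identifying limit points.} Let $\Gb$ be any weak $H^{1/2}$-limit of a subsequence. Weak convergence in $L^2_+$ already implies convergence of $\Gb_n(z)$ for each $z\in\D$, since $\Fb\mapsto\Fb(z)$ is given by pairing with a kernel. So it suffices to show $\Gb_n(z)\to(\Us(t)\Pi\Ub_0)(z)$ for $z$ in a small disk $\D_{1/2}$. Writing $A_n=\eu^{-it_nT_{\Ub_{0,n}}}S^*$ and $A=\eu^{-itT_{\Ub_0}}S^*$, we use
\[
(\id-zA_n)^{-1}\Pi\Ub_{0,n}-(\id-zA)^{-1}\Pi\Ub_0=(\id-zA_n)^{-1}\bigl(\Pi\Ub_{0,n}-\Pi\Ub_0\bigr)+(\id-zA_n)^{-1}z(A_n-A)(\id-zA)^{-1}\Pi\Ub_0 .
\]
The resolvents satisfy $\|(\id-zA_n)^{-1}\|\le 2$ on $\D_{1/2}$. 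The first term then tends to zero since $\Pi\Ub_{0,n}\to\Pi\Ub_0$ in $L^2$. The second tends to zero by the strong convergence in (P$_3$). Applying $\Mean$ gives the pointwise convergence, so $\Gb=\Us(t)\Pi\Ub_0$ on $\D_{1/2}$, and hence on all of $\D$ by holomorphy.

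\textbf{Conclusion.} Every subsequence has a further subsequence converging weakly in $H^{1/2}$ to the same limit. Hence the whole sequence converges weakly in $H^{1/2}$, and the compact embedding $H^{1/2}(\T)\hookrightarrow L^2(\T)$ upgrades this to strong $L^2$ convergence. Independence of the approximating sequence is immediate, because the limit $\Us(t)\Pi\Ub_0$ is defined purely from $\Ub_0$.

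The only genuinely delicate ingredient is (P$_3$), the strong convergence of $\eu^{-it_nT_{\Ub_{0,n}}}$. This follows from strong convergence of bounded self-adjoint operators with uniformly bounded norms: approximate the exponential by polynomials uniformly on $[-1,1]$, and handle $t_n\to t$ by the bound $\|\eu^{-isT}-\eu^{-is'T}\|\le|s-s'|\,\|T\|$. Everything else is routine.
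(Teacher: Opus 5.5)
Your proposal is correct and follows essentially the paper's route: a uniform $H^{1/2}$ bound from (P$_2$) and the conservation laws, identification of the limit through the resolvent argument of Lemma \ref{lem:Us_Sigma_conv}, and Rellich compactness. The differences are minor: you inline that resolvent argument and absorb the perturbation $\Pi\Ub_{0,n}-\Pi\Ub_0$ into it, where the paper handles it separately via $\|\Us_n(t_n)\|\le 1$; and you have a harmless factor slip, since $\|\Pi\Ub\|_{\dot H^{1/2}}^2 = E[\Ub]$ rather than $\tfrac12 E[\Ub]$, which does not affect the boundedness you need.
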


\begin{proof}
First, we notice that
\be \label{ineq:conv_Un}
\| \Us_n(t_n) \Pi \Ub_{0,n} - \Us_n(t_n) \Pi \Ub_0 \|_{L^2} \leq \| \Pi ( \Ub_{0,n} - \Ub_0 )\|_{L^2} \to 0
\ee
using that $\| \Us_n(t_n) \| \leq 1$ for all $n \in \N$. From the strong operator convergence (P$_3$) and Lemma \ref{lem:Us_Sigma_conv}, we deduce that $\Us_n(t_n) \Pi \Ub_0 \weakto \Us(t) \Pi \Ub_0$ in $L^2_+$.  Together with \eqref{ineq:conv_Un} this implies
$$
\Us_n(t_n) \Pi \Ub_{0,n} \weakto \Us(t) \Pi \Ub_0 \quad \mbox{in $L^2$}.
$$
Evidently, the limit is independent of the chosen sequence $\Ub_{0,n}$ of rational data. Next, since $\Ub_{0,n}$ with $n \in \N$ forms a bounded sequence in $H^{1/2}$ and by conservation laws for the corresponding solutions $\Ub_n \in C(\R; H^\infty)$ in Corollary \ref{cor:GWP_rational}, we use (P$_2)$ to find 
$$
\sup_{n \in \N} \| \Us_n(t_n) \Pi \Ub_{0,n} \|_{H^{1/2}} \leq \sup_{n \in \N} \| \Ub_{0,n} \|_{H^{1/2}} < \infty.
$$
By  Rellich compactness $H^{1/2}(\T) \subset L^2(\T)$, we infer that $\Us_n(t_n) \Pi \Ub_{0,n} \to \Us(t) \Pi \Ub_0$ in $L^2$ and $\Us_n(t_n) \Pi \Ub_{0,n} \weakto \Us(t) \Pi \Ub_0$ in $H^{1/2}$. 
\end{proof}

As a next step, we introduce the map 
\be \label{def:phi}
\Phi : \R \times \mathcal{R}at(\T; \Gr_k(\C^d)) \to  \mathcal{R}at(\T; \Gr_k(\C^d))
\ee
by setting
$$
(t, \Ub_0) \mapsto \Phi_t(\Ub_0) := \Us(t) \Pi \Ub_0 + (\Us(t) \Pi \Ub_0)^* - \Mean(\Ub_0).
$$
For all $\Ub_0 \in \mathcal{R}at(\T; \Gr_k(\C^d))$, we note that $\Phi_t(\Ub_0) \in C(\R; H^\infty)$ yields the unique global-in-time solution of \eqref{eq:HWM} with rational initial datum $\Phi_0(\Ub_0)=\Ub_0$. 

\begin{lem} \label{lem:extension}
The map $\Phi$ defined in \eqref{def:phi} admits a unique continuous extension 
$$
\Phi : \R \times H^{1/2}(\T; \Gr_k(\C^d)) \to  H^{1/2}(\T; \Gr_k(\C^d)).
$$
Moreover, for any $\Ub_0 \in H^{1/2}(\T; \Gr_k(\C^d))$, we have that $\Ub(t) = \Phi_t(\Ub_0)$ is a weak solution of \eqref{eq:HWM} with initial datum $\Ub(0) = \Ub_0$ and it holds
$$
\Ub(\cdot) \in C(\R; L^2) \cap C(\R; H^{1/2}_{\mathrm{w}}),
$$
where $H^{1/2}_{\mathrm{w}}$ denotes $H^{1/2}$ equipped with the weak topology. In addition,
$$
\Mean(\Ub(t)) = \Mean(\Ub_0) \quad \mbox{and} \quad E[\Ub(t)] \leq E[\Ub_0] \quad \mbox{for all $t \in \R$}.
$$
\end{lem}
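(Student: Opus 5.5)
The plan is to define the extension directly through the explicit formula: for $\Ub_0 \in H^{1/2}(\T;\Gr_k(\C^d))$ and $t\in\R$ set
$$
\Phi_t(\Ub_0) := \Us(t)\Pi\Ub_0 + (\Us(t)\Pi\Ub_0)^* - \Mean(\Ub_0),
$$
with $\Us(t)$ given by \eqref{def:Us}. On rational data this agrees with \eqref{def:phi}. The main tool is Lemma \ref{lem:GWP_minor}. Take any rational $\Ub_{0,n}\to\Ub_0$ in $H^{1/2}$ and any $t_n\to t$. Then $\Phi_{t_n}(\Ub_{0,n}) \to \Phi_t(\Ub_0)$ in $L^2$ and weakly in $H^{1/2}$, because $\Mean(\Ub_{0,n})\to\Mean(\Ub_0)$ and taking adjoints is an isometry. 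Since the limit does not depend on the approximating sequence, and rational maps are dense by Theorem \ref{thm:dense}, the extension is unique. It is also (jointly) continuous from $\R\times H^{1/2}$ into $L^2\cap H^{1/2}_{\mathrm{w}}$. To reach a general sequence $\Ub_0^{(m)}\to\Ub_0$ in $H^{1/2}$, we approximate each $\Ub_0^{(m)}$ by a rational datum and apply Lemma \ref{lem:GWP_minor} along a diagonal sequence. Taking $\Ub_{0,n}$ fixed-approximating and varying $t_n\to t$ gives $\Ub\in C(\R;L^2)\cap C(\R;H^{1/2}_{\mathrm{w}})$.

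Next come the target, mean and energy. Since $\Phi_{t}(\Ub_{0,n})\to\Phi_t(\Ub_0)$ in $L^2$, a subsequence converges a.e. on $\T$. The set $\Gr_k(\C^d)$ is closed, so $\Phi_t(\Ub_0)(\theta)\in\Gr_k(\C^d)$ for a.e. $\theta$. Hence $\Phi_t(\Ub_0)\in H^{1/2}(\T;\Gr_k(\C^d))$. Mean conservation follows by evaluating the explicit formula at $z=0$: $\Mean(\Us(t)\Fb) = (\Us(t)\Fb)(0) = \Mean(\Fb)$. This uses Lemma \ref{lem:UsSigma}, and it gives $\Mean(\Phi_t(\Ub_0))=\Mean(\Ub_0)$. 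Alternatively one can pass to the limit in the rational mean conservation. For the energy, the rational solutions satisfy $E[\Phi_t(\Ub_{0,n})]=E[\Ub_{0,n}]\to E[\Ub_0]$ by Corollary \ref{cor:GWP_rational}. Weak lower semicontinuity of the $\dot H^{1/2}$ norm then gives $E[\Phi_t(\Ub_0)]\le E[\Ub_0]$.

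The main obstacle is showing that the limit is a weak solution. The difficulty is passing to the limit in $\int_\R\int_\T \tr(\phi\,[\Ub_n,|D|\Ub_n])$, since $|D|\Ub_n$ only converges weakly in $H^{-1/2}$. My plan is to use the singular-integral representation
$$
|D|\Ub(\theta)=c\int_\T \frac{\Ub(\theta)-\Ub(\varphi)}{\sin^2\frac{\theta-\varphi}{2}}\,d\varphi .
$$
Together with $[\Ub(\theta),\Ub(\theta)]=0$ and symmetrization in $(\theta,\varphi)$, this rewrites the pairing with a test function $\phi$ as a constant multiple of
$$
\iint_{\T\times\T}\tr\Big(\tfrac{(\phi(\theta)-\phi(\varphi))\,}{|\sin\frac{\theta-\varphi}{2}|}\,\Big[\tfrac{\Ub(\theta)-\Ub(\varphi)}{|\sin\frac{\theta-\varphi}{2}|},\Ub(\varphi)\Big]\Big)\,d\theta\,d\varphi,
$$
up to the precise placement of factors in the commutator. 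Here the difference quotient $(\Ub_n(\theta)-\Ub_n(\varphi))/|\sin|$ is bounded in $L^2(\T\times\T)$ by the energy, so it converges weakly. The other factor $(\phi(\theta)-\phi(\varphi))\Ub_n(\varphi)/|\sin|$ converges strongly in $L^2(\T\times\T)$ by dominated convergence. This uses $|\Ub_n|\le C$, the a.e. convergence on subsequences, and the boundedness of $|\phi(\theta)-\phi(\varphi)|/|\sin|$. A weak times strong product passes to the limit for each $t$, and the time integral is handled by dominated convergence using the uniform energy bound. The linear term $\int \tr(\pt_t\phi\,\Ub_n)$ converges by the $L^2$ convergence, and the initial condition is $\Phi_0(\Ub_0)=\Ub_0$ since $\Us(0)=\id$.
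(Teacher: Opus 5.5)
Your proposal is correct and follows the paper's proof. The extension is identified through the explicit formula via Lemma \ref{lem:GWP_minor} and density of rational data, the $\Gr_k(\C^d)$-constraint comes from a.e.\ convergence, and the energy inequality follows from weak lower semicontinuity. Your direct evaluation $(\Us(t)\Fb)(0)=\Mean(\Fb)$ for the mean and your diagonal argument for general sequences of data are harmless refinements.

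The one step where you go beyond the paper is the weak-solution property. The paper only invokes a uniform bound on $\pt_t \Ub_n$ in $C(\R;H^{-1/2})$. Your symmetrized double-integral identity, combined with the weak--strong product argument, actually justifies passing to the limit in $[\Ub_n,|D|\Ub_n]$, and it is also the natural proof of that $H^{-1/2}$ bound. Note that the test function must be matrix-valued, since the trace of a commutator against a scalar test function vanishes identically.
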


\begin{proof}
We take a sequence $\Ub_{0,n} \in \mathcal{R}at(\T;\Gr_k(\C^d))$ such that $\Ub_{0,n} \to \Ub_0$ in $H^{1/2}$ and assume that $t_n \to t$. By Lemma \ref{lem:GWP_minor} and the definition of $\Phi_t$, we conclude that
\be \label{eq:Phi_conv}
\mbox{$\Phi_{t_n}(\Ub_{0,n}) \to \Phi_t(\Ub_0)$ in $L^2$} \quad \mbox{$\Phi_{t_n}(\Ub_{0,n}) \weakto \Phi_t(\Ub_0)$ in $H^{1/2}$}.
\ee 
 This shows that the map $\Phi$ initially defined for rational initial data extends uniquely to initial data in $H^{1/2}(\T; \Gr_k(\C^d))$ so that the map
 $$
 \Phi : \R \times H^{1/2}(\T; \Gr_k(\C^d)) \to \R \times H^{1/2}_{\mathrm{w}}(\T; \C^{d \times d})
 $$
 is continuous, where $H^{1/2}_{\mathrm{w}}$ denotes $H^{1/2}$ equipped with the weak topology.

Let us write $\Ub(t) = \Phi_t(\Ub_0)$. We claim that
$$
\Ub(t, \theta) \in \Gr_k(\C^d) \quad \mbox{for all $t \in \R$ and a.e.~$\theta \in \T$}.
$$
Indeed, let us fix $t \in \R$. In view of \eqref{eq:Phi_conv}, we conclude that $\Ub_n(t, \theta) \to \Ub(t, \theta)$ in $\C^{d \times d}$ for almost every $\theta \in \T$ with the smooth rational solutions $\Ub_n \in C(\R; H^\infty)$ of \eqref{eq:HWM}. This pointwise convergence implies that the limit $\Ub(t)$ belongs to $H^{1/2}(\T; \Gr_k(\C^d))$ for all $t \in \R$.

From the previous discussion we see that 
$$
\Ub(t) = \Phi_t(\Ub_0) \in C(\R; L^2) \cap C(\R; H^{1/2}_{\mathrm{w}}).
$$
By passing to the limit $n \to \infty$ for the smooth rational solutions $\Ub_n(t) \in C(\R; H^\infty)$ and the fact that $\pt_t \Ub_n$ is uniformly bounded in $C(\R; H^{-1/2})$, we readily deduce that passing to the limit $n \to \infty$ yields that $\Ub(t) \in C(\R;L^2) \cap C(\R; H^{1/2}_{\mathrm{w}}$ is a weak solution of (HWM) with initial datum $\Ub(0)=\Ub_0$.

Finally, from the conservation laws $\Mean(\Ub_n(t)) = \Mean(\Ub_{0,n})$ and $E[\Ub_n(t)] = E[\Ub_{0,n}]$ for the smooth rational solutions $\Ub_n \in C(\R; H^\infty)$ together with \eqref{eq:Phi_conv} and the fact $\Ub_{0,n} \to \Ub_0$ in $H^{1/2}$, we easily deduce that
$$
\Mean[\Ub(t)] = \Mean[\Ub_0] \quad \mbox{and} \quad E[\Ub(t)] \leq E[\Ub_0] \quad \mbox{for all $t \in \R$},
$$
which finishes the proof.
\end{proof}

\subsection{Lax evolution and energy conservation}
At this point, it is conceivable that the solution $\Ub(t) = \Phi_t(\Ub_0)$ obtained above may exhibit a loss of energy, i.e., 
$$
E[\Ub(t)] < E[\Ub_0] \quad \mbox{for some $t \in \R$},
$$
which is tantamount to the failure of strong continuity of the map $t \mapsto \Phi_t(\Ub_0)$ in $H^{1/2}$. To rule out this scenario, and thus proving energy conservation as a by-product, we will derive that 
$$
\mbox{$\Us(t) : L^2_+ \to L^2_+$ is {\em unitary} for all $t \in \R$}
$$
and energy conservation will follow by a corresponding Lax evolution by $\Us(t)$ for the trace-class operator $K_{\Ub(t)} = \id - T_{\Ub(t)}^2$ with $E[\Ub(t)] = \Tr(K_{\Ub(t)})$. 

We start with the following key observation concerning the transport of eigenfunctions of $T_{\Ub_0}$ in terms of the map $\Us(t)$.

\begin{lem} \label{lem:eigenfunctions}
Let $t \in \R$ be given. Suppose $\Fb \in L^2_+ \setminus \{ 0 \}$ is an eigenfunction for $T_{\Ub_0}$ with eigenvalue $\mu \in \sigma(T_{\Ub_0})$. Then $\Fb(t) := \Us(t) \Fb$ satisfies
$$
T_{\Ub(t)} \Fb(t) = \mu \Fb(t) \quad \mbox{and} \quad \Fb(t) \neq 0.
$$
\end{lem}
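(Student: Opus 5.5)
The plan has two parts. First I establish the intertwining relation $T_{\Ub(t)} \Us(t) = \Us(t) T_{\Ub_0}$ by passing to the limit from rational data. Then I show $\Us(t)\Fb \neq 0$ directly from the explicit formula, using that eigenspaces of $T_{\Ub_0}$ are nearly $S^*$-invariant. The eigenvalue equation for $\Fb(t)$ is then immediate: $T_{\Ub(t)}\Fb(t) = \Us(t) T_{\Ub_0}\Fb = \mu \Fb(t)$.

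\emph{Intertwining.} Take rational $\Ub_{0,n} \to \Ub_0$ in $H^{1/2}$ with solutions $\Ub_n(t)$. By Corollary \ref{cor:GWP_rational} and Theorem \ref{thm:explicit_smooth}, $T_{\Ub_n(t)} \Us_n(t) = \Us_n(t) T_{\Ub_{0,n}}$, where $\Us_n(t)$ is unitary. I pass to the limit in weak form, testing against $\Gb \in L^2_+$.
\begin{itemize}
\item By (P$_3$) and Lemma \ref{lem:Us_Sigma_conv}, $\Us_n(t) \weakto \Us(t)$ weakly, and $\|\Us_n(t)\| \le 1$.
\item Since $T_{\Ub_{0,n}}\Fb \to T_{\Ub_0}\Fb$ in norm, the right-hand side gives $\langle \Us_n(t) T_{\Ub_{0,n}}\Fb | \Gb\rangle \to \langle \Us(t)T_{\Ub_0}\Fb|\Gb\rangle$.
\item By Lemma \ref{lem:extension}, $\Ub_n(t) \to \Ub(t)$ in $L^2$ with $|\Ub_n(t)|$ uniformly bounded pointwise, since the values lie in $\Gr_k(\C^d)$. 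Passing to a.e.-convergent subsequences and using dominated convergence, $T_{\Ub_n(t)} \to T_{\Ub(t)}$ strongly. Uniqueness of the limit $\Ub(t)$ removes the need for subsequences.
\item Using self-adjointness, the left-hand side is $\langle \Us_n(t)\Fb | T_{\Ub_n(t)}\Gb\rangle$, which is a weakly convergent sequence paired with a strongly convergent one. Hence it tends to $\langle T_{\Ub(t)}\Us(t)\Fb|\Gb\rangle$.
\end{itemize}
This yields the intertwining relation.

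\emph{Nonvanishing (main step).} Write $\Om := \eu^{-itT_{\Ub_0}}$. By the geometric series,
$$(\Us(t)\Fb)(z) = \sum_{j\ge0} \Mean\big((\Om S^*)^j\Fb\big) z^j,$$
so $\Us(t)\Fb = 0$ means $\Mean((\Om S^*)^j \Fb)=0$ for all $j \ge 0$. Suppose this holds with $T_{\Ub_0}\Fb=\mu\Fb$. I prove by induction that $(\Om S^*)^j\Fb = \eu^{-ijt\mu}(S^*)^j\Fb \in \Es_\mu(T_{\Ub_0})$.

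The key input is Lemma \ref{lem:commutator}. If $\Gb \in \Es_\mu(T_{\Ub_0})$ with $\Mean(\Gb)=0$, then $[S^*,T_{\Ub_0}]\Gb = 0$, so $S^*\Gb \in \Es_\mu(T_{\Ub_0})$. This is the near $S^*$-invariance, and it holds for every eigenvalue, including $\mu=\pm1$ if those occur. Moreover $\Om$ acts as the scalar $\eu^{-it\mu}$ on $\Es_\mu$.

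Inductive step: assume $(\Om S^*)^j\Fb = \eu^{-ijt\mu}(S^*)^j\Fb \in \Es_\mu$. Its mean vanishes by hypothesis, so $(S^*)^{j+1}\Fb \in \Es_\mu$. Applying $\Om$ multiplies by $\eu^{-it\mu}$, which gives the claim for $j+1$.

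Consequently $\widehat{\Fb}_j = \Mean((S^*)^j\Fb) = \eu^{ijt\mu}\Mean((\Om S^*)^j\Fb) = 0$ for all $j \ge 0$. Hence $\Fb = 0$, contradicting $\Fb \neq 0$.

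The only delicate point I anticipate is the strong convergence $T_{\Ub_n(t)}\to T_{\Ub(t)}$ in the intertwining step, which needs the uniform $L^\infty$ bound together with the $L^2$ convergence. The nonvanishing argument itself is purely algebraic once near $S^*$-invariance is available.
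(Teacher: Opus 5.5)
Your proposal is correct and follows essentially the same route as the paper. You obtain the eigenvalue equation by passing to the weak limit in the rational Lax intertwining, correctly using $T_{\Ub_{0,n}}\Fb \to T_{\Ub_0}\Fb$ in norm. You obtain nonvanishing from near $S^*$-invariance of $\Es_\mu$ together with $\Om = \eu^{-it\mu}$ on $\Es_\mu$; your induction forcing all Taylor coefficients $\Mean((\Om S^*)^j \Fb)$ to vanish is simply the contrapositive of the paper's minimal-index argument, which uses $(S^*)^n \Us(t) = \Us(t)(\Om S^*)^n$ and $(\Us(t)\Gb)(0) = \Mean(\Gb)$.
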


\begin{proof}
We divide the proof into the following steps.

\medskip
\textbf{Step 1.} As before, let $\Ub_{0,n} \in \mathcal{R}at(\T; \Gr_k(\C^d))$ be a sequence of rational data with $\Ub_{0,n} \to \Ub_0$ in $H^{1/2}$. Likewise, we denote by $\Ub_n(t)$ the corresponding solutions $C(\R; H^\infty)$ of (HWM) with $\Ub_n(0) = \Ub_{0,n}$ and we let $\Us_n(t) : L^2_+ \to L^2_+$ denote the corresponding sequence of unitary maps. From the Lax evolution for rational solutions we infer that
\be \label{eq:Lax_eigen}
T_{\Ub_n(t)} \Us_n(t) \Fb = \Us_n(t) T_{\Ub_0} \Fb = \mu \Us_n(t) \Fb
\ee
where $\Fb \in L^2_+ \setminus \{ 0 \}$ solves $T_{\Ub_0} \Fb = \mu \Fb$. Note that $\Us_n(t) \Fb \weakto \Us(t) \Fb$ in $L^2$ analogous to the proof of Lemma \ref{lem:GWP_minor}. Moreover, from $\Ub_n(t) \to \Ub(t)$ in $L^2$ we easily deduce that $T_{\Ub_n(t)} \to T_{\Ub(t)}$ strongly as operators. By this fact and the self-adjointness of $T_{\Ub_n(t)}$ and $T_{\Ub(t)}$, it is elementary to check that $T_{\Ub_n(t)} \Us_n(t) \Fb \weakto T_{\Ub(t)} \Us(t) \Fb$. By passing to the limit $n \to \infty$ in \eqref{eq:Lax_eigen}, we obtain that $\Fb(t) := \Us(t) \Fb$ satisfies
$$
T_{\Ub(t)} \Fb(t) = \mu \Fb(t) .
$$

\medskip
\textbf{Step 2.} We now claim that 
$$
\Fb(t) \neq 0 .
$$
Recalling Lemma \ref{lem:UsSigma}, we see that $\Fb(t) = \Us(t) \Fb \neq 0$ provided that $\Mean(\Fb) \neq 0$. Hence it remains to discuss the case when $\Mean(\Fb) = 0$ for the rest of the proof.

Suppose now that $\Fb \neq 0$ with $\Mean(\Fb)=\widehat{\Fb}_0=0$. Since $\Fb \neq 0$, we can define 
$$
n := \min \{ k \geq 1 : \Mean((S^*)^k \Fb) = \widehat{\Fb}_k \neq 0 \}.
$$
Next, we consider the eigenspace
$$
\Es_{\mu} = \ker ( T_{\Ub_0} - \mu \id).
$$
As an eigenspace for the Toeplitz operator $T_{\Ub_0}$, we recall that $\Es_\mu$ must be {\em nearly $S^*$-invariant}, i.e., 
$$
\Gb \in \Es_\mu \quad \mbox{and} \quad \Mean(\Gb) = 0 \quad \Rightarrow \quad S^* \Gb \in \Es_\mu,
$$
which is a direct consequence of the commutator identity in Lemma \ref{lem:commutator}. Therefore, 
$$
(S^*)^j \Fb \in \Es_{\mu} \quad \mbox{for $j=0, \ldots, n$}.
$$ 
Now we recall the intertwining identity from Lemma \ref{lem:UsSigma}, which gives us
\be \label{eq:intertwine}
S^* \Us(t) = \Us(t) \eu^{-it T_{\Ub_0}} S^*.
\ee
By iterating this identity $n$ times and applying it to the vector $\Fb$, we obtain
$$
(S^*)^n \Us(t) \Fb = \Us(t) (\eu^{-it T_{\Ub_0}} S^*)^n \Fb.
$$
Since $\eu^{-it T_{\Ub_0}} \Gb = \eu^{-it \mu} \Gb$ for $\Gb \in \Es_\mu$, we find $(\eu^{-it T_{\Ub_0}} S^*)^n \Fb= \eu^{-in \mu t} (S^*)^n \Fb$, whence it follows that
$$
(S^*)^n \Us(t) \Fb= e^{-in \mu t} \Us(t) (S^*)^n \Fb \neq 0,
$$
using that $\Mean((S^*)^n \Fb) \neq 0$ and the fact that $\Us(t) \Gb \neq 0$ if $\Mean(\Gb) \neq 0$ by Lemma \ref{lem:UsSigma}. Thus we deduce that $\Fb(t) = \Us(t) \Fb \neq 0$ must hold.
\end{proof}

We next establish $\Us(t) :L^2_+ \to L^2_+$ has trivial kernel for all $t \in \R$, which proves that it is a unitary map thanks to Theorem \ref{thm:stony}.

\begin{lem} \label{lem:Us_kernel}
It holds that
$$
\ker \, \Us(t) = \{0 \} \quad \mbox{for all $t \in \R$}.
$$
\end{lem}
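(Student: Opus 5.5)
The plan is to combine the pure point spectrum of $T_{\Ub_0}$ (Lemma \ref{lem:Toeplitz_spectrum}(iii)) with the eigenfunction transport of Lemma \ref{lem:eigenfunctions} and the self-adjointness of $T_{\Ub(t)}$. Fix $t \in \R$ and let $\Gb \in \ker \, \Us(t)$. Since $L^2_+ = \bigoplus_{\mu \in \sigma(T_{\Ub_0})} \Es_\mu$ with $\Es_\mu = \ker(T_{\Ub_0} - \mu \id)$, we can write $\Gb = \sum_\mu P_\mu \Gb$, the sum converging in $L^2_+$. Because $\Us(t)$ is bounded (a contraction by Lemma \ref{lem:UsSigma}), we get
$$
0 = \Us(t) \Gb = \sum_{\mu} \Us(t) P_\mu \Gb,
$$
again with convergence in $L^2_+$.

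The next step is to show that the summands $\Us(t) P_\mu \Gb$ are mutually orthogonal. By Step 1 of Lemma \ref{lem:eigenfunctions}, which applies to any eigenfunction and in particular to $P_\mu \Gb$, each $\Us(t) P_\mu \Gb$ lies in $\ker(T_{\Ub(t)} - \mu \id)$. Here $\Ub(t) = \Phi_t(\Ub_0) \in H^{1/2}(\T;\Gr_k(\C^d))$ by Lemma \ref{lem:extension}, so $T_{\Ub(t)}$ is self-adjoint. Eigenvectors of a self-adjoint operator belonging to distinct eigenvalues are orthogonal, hence the family $\{\Us(t) P_\mu \Gb\}_\mu$ is orthogonal. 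By Pythagoras,
$$
0 = \| \Us(t)\Gb\|^2 = \sum_\mu \| \Us(t) P_\mu \Gb\|^2,
$$
so $\Us(t) P_\mu \Gb = 0$ for every $\mu$.

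Lemma \ref{lem:eigenfunctions} now says that $\Us(t)$ sends nonzero eigenfunctions of $T_{\Ub_0}$ to nonzero vectors. Therefore $P_\mu \Gb = 0$ for all $\mu$, and hence $\Gb = 0$. Since $t$ was arbitrary, $\ker \, \Us(t) = \{0\}$ for all $t \in \R$. Theorem \ref{thm:stony} then yields unitarity.

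The substantive difficulty, the non-vanishing of $\Us(t)$ on eigenvectors, was already handled in Lemma \ref{lem:eigenfunctions} via near $S^*$-invariance. The only remaining delicate point is the orthogonality argument. It requires that $\Ub(t)$ be a genuine $\Gr_k$-valued map, so that $T_{\Ub(t)}$ is self-adjoint, and that the transported vectors are exact eigenvectors and not merely limits. Both facts are supplied by Lemma \ref{lem:extension} and Step 1 of Lemma \ref{lem:eigenfunctions}. Possible non-separability plays no role, since $L^2_+(\T;\C^{d\times d})$ is separable and the spectrum is countable.
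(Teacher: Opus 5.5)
Your proof is correct and follows essentially the same route as the paper. Like the paper, you decompose along the eigenspaces of $T_{\Ub_0}$, use Lemma \ref{lem:eigenfunctions} to see that $\Us(t)$ maps $\ker(T_{\Ub_0}-\mu\id)$ injectively into $\ker(T_{\Ub(t)}-\mu\id)$, and conclude from the mutual orthogonality of the eigenspaces of the self-adjoint $T_{\Ub(t)}$. Your explicit use of the boundedness of $\Us(t)$ to pass it through the infinite sum, followed by Pythagoras, only makes precise a step the paper leaves implicit.
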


\begin{proof}
Let $t \in \R$ be given. For $\mu \in \sigma(T_{\Ub_0})$, we consider the eigenspaces
$$
\Es_\mu(0) = \ker (T_{\Ub_0} - \mu \id) \quad \mbox{and} \quad \Es_\mu(t)= \ker (T_{\Ub(t)} - \mu \id).
$$
By Lemma \ref{lem:eigenfunctions}, we infer that
$$
\mbox{$\Us(t) : \Es_\mu(0) \to \Es_\mu(t)$ is injective.}
$$
Assume now that there exists some $\Fb \in L^2_+$ with $\Fb \neq 0$ such that $\Us(t) \Fb = 0$. From Section \ref{sec:spec} we recall that $\sigma(T_{\Ub_0}) = \sp(T_{\Ub_0})$ is at most countable and only point spectrum. By self-adjointness of $T_{\Ub_0}$, we can write
$$
\Fb = \sum_{\mu \in \sigma(T_{\Ub_0})} \Fb_\mu \quad \mbox{with $\Fb_\mu \in \Es_\mu(0)$}.
$$
Since $\Us(t)$ maps $\Es_\mu(0)$ into $\Es_\mu(t)$, we observe that
$$
\Us(t) \Fb = \sum_{\mu  \in \sigma(T_{\Ub_0})} \Us(t) \Fb_\mu \quad \mbox{with $\Us(t) \Fb_\mu \in \Es_\mu(t)$}.
$$
Because $\Es_\mu(t) \perp \Es_\nu(t)$ whenever $\mu \neq \nu$ by self-adjointness of $T_{\Ub(t)}$, it follows from $\Us(t) \Fb = 0$ that
$$
\Us(t) \Fb_\mu = 0 \quad \mbox{for all $\mu \in \sigma(T_{\Ub_0})$}.
$$
On the other hand, since $\Fb \neq 0$, there exists some eigenvalue $\mu_* \in \sigma(T_{\Ub_0})$ such that $\Fb_{\mu_*} \neq 0$. Since $\Us(t) : \Es_{\mu_*}(0) \to \Es_{\mu_*}(t)$ is injective, this contradicts that $\Us(t) \Fb_{\mu_*} = 0$.
\end{proof}

With the aid of Lemma \ref{lem:Us_kernel}, we are ready to establish the following central result.

\begin{lem}[Lax evolution and energy conservation] \label{lem:energy_conservation}
For all $t\in \R$, the map $\Us(t) : L^2_+ \to L^2_+$ is unitary and it holds that
$$
T_{\Ub(t)} = \Us(t) T_{\Ub_0} \Us(t)^*.
$$
In particular, the trace-class operator $K_{\Ub(t)} = \id - T_{\Ub(t)}^2$ satisfies 
$$
K_{\Ub(t)} = \Us(t) K_{\Ub_0} \Us(t)^*,
$$
and we have energy conservation
$$
E[\Ub(t)] = \Tr(K_{\Ub(t)}) = \Tr(K_{\Ub_0}) = E[\Ub_0] \quad \mbox{for all $t \in \R$}.
$$
\end{lem}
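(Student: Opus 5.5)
The plan is to combine Lemma \ref{lem:Us_kernel} with the stability principle, and then to pass the intertwining relation from the rational approximations to the limit. Fix $t \in \R$. By Lemma \ref{lem:Us_kernel}, $\ker \, \Us(t) = \{0\}$. Theorem \ref{thm:stony}, applied with $E = \C^{d \times d}$ and the bounded self-adjoint operator $L = T_{\Ub_0}$, then shows that $\Us(t)$ is unitary on $L^2_+$.

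Next I establish the intertwining relation $T_{\Ub(t)} \Us(t) = \Us(t) T_{\Ub_0}$ on all of $L^2_+$. By Lemma \ref{lem:Toeplitz_spectrum}(iii), $T_{\Ub_0}$ has pure point spectrum, so every $\Fb \in L^2_+$ can be written as an orthogonal sum $\Fb = \sum_\mu \Fb_\mu$ with $\Fb_\mu \in \Es_\mu(0)$. Lemma \ref{lem:eigenfunctions} gives $T_{\Ub(t)} \Us(t) \Fb_\mu = \mu \, \Us(t) \Fb_\mu = \Us(t) T_{\Ub_0} \Fb_\mu$. Both sides are bounded operators, so summing over $\mu$ yields the identity on $L^2_+$.

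Alternatively, the identity follows directly from the weak limit argument in Step 1 of the proof of Lemma \ref{lem:eigenfunctions} applied to a general $\Fb$. There $T_{\Ub_n(t)} \Us_n(t) \Fb = \Us_n(t) T_{\Ub_{0,n}} \Fb$. The left side converges weakly to $T_{\Ub(t)} \Us(t) \Fb$. On the right, $T_{\Ub_{0,n}} \Fb \to T_{\Ub_0} \Fb$ strongly, and the operators $\Us_n(t)$ are uniformly bounded and converge weakly by Lemma \ref{lem:Us_Sigma_conv}, so the right side converges weakly to $\Us(t) T_{\Ub_0} \Fb$.

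Unitarity now turns the intertwining relation into $T_{\Ub(t)} = \Us(t) T_{\Ub_0} \Us(t)^*$. Squaring gives $T_{\Ub(t)}^2 = \Us(t) T_{\Ub_0}^2 \Us(t)^*$, and by Lemma \ref{lem:key_identity},
$$
K_{\Ub(t)} = \id - T_{\Ub(t)}^2 = \Us(t)(\id - T_{\Ub_0}^2)\Us(t)^* = \Us(t) K_{\Ub_0} \Us(t)^*.
$$
Here $\Ub(t) \in H^{1/2}(\T; \Gr_k(\C^d))$ by Lemma \ref{lem:extension}, so Lemma \ref{lem:key_identity} applies at time $t$. Since the trace is invariant under unitary conjugation of a nonnegative trace-class operator, we get $E[\Ub(t)] = \Tr(K_{\Ub(t)}) = \Tr(K_{\Ub_0}) = E[\Ub_0]$.

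The main obstacle is the unitarity itself, that is, ruling out vanishing of $\Us(t)$ on eigenvectors. This is already handled by Lemmas \ref{lem:eigenfunctions} and \ref{lem:Us_kernel} together with Theorem \ref{thm:stony}, so the remaining steps are routine. The only care needed is in justifying the limit of the intertwining identity, which uses strong convergence of $T_{\Ub_{0,n}}$ and uniform boundedness of $\Us_n(t)$.
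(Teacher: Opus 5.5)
Your proposal is correct and follows the paper's proof. Unitarity comes from Lemma \ref{lem:Us_kernel} together with Theorem \ref{thm:stony}, and the intertwining relation comes from passing to the weak limit in $T_{\Ub_n(t)}\Us_n(t) = \Us_n(t) T_{\Ub_{0,n}}$, followed by conjugation and invariance of the trace. Your ``alternative'' route is exactly the paper's argument, and your first route via the eigenspace decomposition just reuses the same limit argument from Step~1 of Lemma \ref{lem:eigenfunctions}, where you rightly keep $T_{\Ub_{0,n}}$ rather than $T_{\Ub_0}$ on the right-hand side of the approximate relation.
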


\begin{proof}

That $\Us(t) : L^2_+ \to L^2_+$ is unitary for all $t\in \R$ follows from Theorem \ref{thm:stony} together with Lemma \ref{lem:Us_kernel}. 

To prove the unitary equivalence of $T_{\Ub(t)}$ and $T_{\Ub_0}$, let $\Ub_{0,n} \in \mathcal{R}at(\R; \Gr_k(\C^d))$ be a sequence of rational initial data such that $\Ub_{0,n} \to \Ub_0$ in $H^{1/2}$. As usual, we denote by $\Ub_n(t) = \Phi_t(\Ub_{0,n})$ the corresponding smooth global-in-time solutions. By the Lax evolution, it holds that
$$
T_{\Ub_n(t)} \Us_n(t) = \Us_n(t) T_{\Ub_{0,n}}
$$
with the corresponding unitary maps $\Us_n(t) : L^2_+ \to L^2_+$ for $n \in \N$. Since $\Us_n(t) \weakto \Us(t)$ and $T_{\Ub_n(t)} \to T_{\Ub(t)}$ strongly for any $t \in \R$, the self-adjointness of $T_{\Ub_n(t)}$ and $T_{\Ub(t)}$ allows us to pass to the limit $n \to \infty$ in the identity above to deduce that
$$
T_{\Ub(t)} \Us(t) = \Us(t) T_{\Ub_0}.
$$
Thanks to the unitarity of $\Us(t) : L^2_+ \to L^2_+$, we infer that $T_{\Ub(t)} = \Us(t) T_{\Ub_0} \Us(t)^*$.

The unitary equivalence $K_{\Ub(t)} = \Us(t) K_{\Ub_0} \Us(t)^*$ is now a trivial consequence of the identity $K_{\Ub(t)} = \id - T_{\Ub(t)}^2$. Finally, we recall from Lemma \ref{lem:key_identity} that $E[\Ub(t)] = \Tr(K_{\Ub(t)})$. Together with the fact that $\Tr(U K U^*) = \Tr(K)$ for unitary maps $U$ and trace-class operators $K$ this completes the proof. 
\end{proof}

\subsection{Proof of Theorem \ref{thm:gwp}}
We now ready to prove global well-posedness for (HWM) with initial data in $H^{1/2}$ as stated in Theorem \ref{thm:gwp}. In fact, the proof follows for directly from the preceding discussion. For the reader's convenience, we detail the arguments as follows.

First, we show the continuity of the map 
$$
\Phi : \R \times H^{1/2}(\T; \Gr_k(\C^d)) \to H^{1/2}(\T; \Gr_k(\C^d)),
$$ 
which was obtained as a unique extension from rational initial data by Lemma \ref{lem:GWP_minor}. Recall that $\Phi$ is known to be weakly continuous. Thus it remains to show that 
$$
\| \Phi_{t_n}(\Ub_{0,n}) \|_{H^{1/2}} \to \| \Phi_t(\Ub_0) \|_{H^{1/2}}
$$
whenever $(t_n, \Ub_{0,n}) \to (t, \Ub_0)$ in $\R \times H^{1/2}(\T;\Gr_k(\C^d))$. Indeed, using the fact that $\| \Fb \|_{H^{1/2}}^2 = |\Mean(\Fb)|^2 + 2 E[\Fb]$ for $\Fb \in H^{1/2}$ together with the conservation of mean and energy conservation by Lemma \ref{lem:GWP_minor} and \ref{lem:energy_conservation}, we directly deduce that
\begin{align*}
\| \Phi_{t_n}(\Ub_{0,n}) \|_{H^{1/2}}^2 = \| \Ub_{0,n} \|_{H^{1/2}}^2 \to \| \Ub_0 \|_{H^{1/2}}^2 = \| \Phi_t(\Ub_0) \|_{H^{1/2}}^2
\end{align*}
provided that $\Ub_{0,n} \to \Ub_0$ in $H^{1/2}$.

\medskip
(i)--(ii).  By the continuity of $\Phi$, we deduce that $\Ub \in C(\R; H^{1/2}(\T; \Gr_k(\C^d)))$ holds and from Lemma \ref{lem:GWP_minor} we recall that $\Ub(t) = \Phi_t(\Ub_0)$ is a weak solution of (HWM) with $\Ub(0) = \Ub_0$.  Also, we have energy conservation $E[\Ub(t)] = E[\Ub_0]$ for all $t \in \R$ thanks to Lemma \ref{lem:energy_conservation}.

\medskip
(iii).  Suppose that $\Ub_{0,n} \in H^{1/2}(\T; \Gr_k(\C^d))$ is a sequence with $\Ub_{0,n} \to \Ub_0$ in $H^{1/2}$. For any compact interval $I \subset \R$, we claim that
$$
\sup_{t \in I} \| \Phi_{t}(\Ub_{0,n}) - \Phi_t(\Ub_0) \|_{H^{1/2}} \to 0 \quad \mbox{as $n \to \infty$}.
$$
We argue by contradiction and assume that there exist a sequence $t_n \in I$ and some $\eps_0 > 0$ such that $\| \Phi_{t_n}(\Ub_{0,n}) - \Phi_t(\Ub_0) \|_{H^{1/2}} \geq \eps_0$ for all $n \in \N$. By compactness of $I$, we can assume that $t_n \to t$ for some $t \in I$ by passing to a subsequence if necessary. But since $\Phi_{t_n}(\Ub_{0,n}) \to \Phi_{t}(\Ub_0)$ in $H^{1/2}$ by the continuity of $\Phi$, we arrive at a contradiction.

\medskip
(iv). The group property 
$$
\Phi_{t+s}(\Ub_0) = \Phi_t (\Phi_s(\Ub_0)) \quad \mbox{for $t,s \in \R$}
$$
holds for rational initial data $\Ub_0 \in \mathcal{R}at(\T; \Gr_k(\C^d))$ by uniqueness of these solutions in the class $C(\R; H^{1/2})$. By density of rational data and continuity of $\Phi$, we extend the group property above to any $\Ub_0 \in H^{1/2}(\T; \Gr_k(\C^d))$.

\medskip
(v). We recall from Corollary \ref{cor:GWP_rational} that $\Phi_t(\Ub_0) \in \mathcal{R}at(\T; \Gr_k(\C^d))$ holds for all $t \in \R$ provided that $\Ub_0 \in \mathcal{R}at(\T; \Gr_k(\C^d))$. 

\medskip
Finally, let us suppose that 
$$
\widetilde{\Phi} : \R \times H^{1/2}(\T; \Gr_k(\C^d)) \to H^{1/2}(\T; \Gr_k(\C^d))
$$
is a continuous map such that properties (i) and (v) hold. For $\Ub_0 \in \mathcal{R}at(\T; \Gr_k(\C^d))$, this is seen to imply that $\widetilde{\Ub}(t) = \widetilde{\Phi}_t(\Ub_0)$ is a rational solution of (HWM) in $C(\R; H^\infty)$. But by Corollary \ref{cor:GWP_rational} it follows that
$$
\Pi \widetilde{\Ub}(t,z) = \Mean \left ( ( \id - z \eu^{-i t T_{\Ub_0}} S^*)^{-1} \Pi \Ub_0 \right )
$$ 
is given by the explicit formula. Thus $\Phi$ and $\widetilde{\Phi}$ coincide on rational initial data, and by the uniqueness of the extension to $H^{1/2}$, we conclude that $\Phi$ and $\widetilde{\Phi}$ are identical. This proves uniqueness of the flow map $\Phi$.

The proof of Theorem \ref{thm:gwp} is now complete. \hfill $\Box$

\section{Almost Periodicity}

\label{sec:ap}

\subsection{Preliminaries} 
For the reader's convenience, we first collect some fundamental facts about almost periodic functions valued in Banach spaces. 

Let $X$ be a Banach space and denote by $\BC(\R; X)$ the space of bounded and continuous functions from $\R$ to $X$, which is a Banach space endowed with the norm 
$\| f \|_\BC = \sup_{t \in \R} \| f(t) \|_X$. We recall that $f \in \BC(\R;X)$ is said to be {\em almost periodic} (in the sense of Bohr) if for every $\eps > 0$, there exists $L > 0$ such that every interval $I \subset \R$ of length $L$ contains some $\tau \in I$ with
$$
\sup_{t \in \R} \| f(t + \tau) - f(t) \|_X \leq \eps.
$$
Recall that {\em Bochner's criterion} states that $f \in \BC(\R;X)$ is almost periodic if and only if the set of all its translates 
$$
\mathrm{Trans}(f) = \{ f(\cdot + a) : a \in \R \}
$$
is relatively compact in the Banach space $\BC(\R;X)$. In particular, we see that the space of almost periodic functions
$$
\AP(\R;X) := \{ f \in \BC(\R;X) : \mbox{$f : \R \to X$ is almost periodic} \}
$$
is a Banach space in its own right, equipped with the $\sup$-norm.

\subsection{Almost periodicity for (HWM)}
The goal of this subsection is to prove 
$$
\Phi_t(\Ub_0) \in \AP(\R; H^{1/2}) \quad \mbox{for all $\Ub_0 \in H^{1/2}(\T; \Gr_k(\C^d))$}
$$
for the solutions of (HWM) as provided by Theorem \ref{thm:gwp}. Again, we will make strong use of the explicit formula for (HWM) as follows.

As usual, we let $d \geq 2$ and $1 \leq k \leq d-1$ be fixed integers and we suppose that $\Ub_0 \in H^{1/2}(\T; \Gr_k(\C^d))$ is an initial datum for (HWM). For notational convenience, we denote
$$
\Omega(t) := \eu^{-i t T_{\Ub_0}} : L^2_+ \to L^2_+ \quad \mbox{for $t \in \R$}
$$
with the Hardy space $L^2_+ = L^2_+(\T;\C^{d \times d})$. Using that the spectrum $\sigma(T_{\Ub_0})$ is at most countable, we establish with the following compactness property for the strongly continuous one-parameter unitary group $\{ \Om(t) \}_{t \in \R}$.

\begin{lem} \label{lem:ap_compact}
For every sequence $(a_n)$ in $\R$, there is a subsequence  $(a'_n)$ such that
$$
\mbox{$\Om(a'_n) \to \Om_\infty$ strongly},
$$
where $\Om_\infty$ is a unitary map in $L^2_+$. 
\end{lem}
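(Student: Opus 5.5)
We plan to use the spectral decomposition of $T_{\Ub_0}$ from Lemma \ref{lem:Toeplitz_spectrum}(iii): $T_{\Ub_0} = \sum_{\mu \in \sigma(T_{\Ub_0})} \mu P_\mu$, with the pure point spectrum $\sigma(T_{\Ub_0}) = \{\mu_1, \mu_2, \ldots\}$ at most countable and $L^2_+ = \bigoplus_j \ran(P_{\mu_j})$ an orthogonal decomposition. Then
$$
\Om(t) = \sum_{j} \eu^{-it\mu_j} P_{\mu_j},
$$
with strong convergence of the series. Given $(a_n)$, the phases $\eu^{-ia_n \mu_j}$ lie in the compact set $\pt \D$. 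We extract a subsequence along which $\eu^{-ia_n\mu_1}$ converges, then a further subsequence for $\mu_2$, and so on. The Cantor diagonal subsequence $(a'_n)$ then satisfies $\eu^{-ia'_n \mu_j} \to \zeta_j \in \pt\D$ for every $j$.

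Next we define $\Om_\infty := \sum_j \zeta_j P_{\mu_j}$. Since the $P_{\mu_j}$ are mutually orthogonal projections summing strongly to $\id$ and $|\zeta_j|=1$, $\Om_\infty$ is unitary, with inverse $\sum_j \ov{\zeta_j} P_{\mu_j}$.

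It remains to show strong convergence. Fix $\Fb \in L^2_+$ and $\eps>0$, and choose $J$ with $\sum_{j>J} \|P_{\mu_j}\Fb\|^2 < \eps^2$. Then
$$
\|\Om(a'_n)\Fb - \Om_\infty \Fb\|^2 = \sum_j |\eu^{-ia'_n\mu_j} - \zeta_j|^2 \|P_{\mu_j}\Fb\|^2 \le \sum_{j\le J} |\eu^{-ia'_n\mu_j}-\zeta_j|^2\|P_{\mu_j}\Fb\|^2 + 4\eps^2 .
$$
The finite sum tends to zero as $n\to\infty$, so $\limsup_n \|\Om(a'_n)\Fb-\Om_\infty\Fb\| \le 2\eps$, which gives the claim.

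No real obstacle is expected. The only point needing care is that the spectrum is pure point and countable, so that the eigenprojections give a complete orthogonal decomposition; Lemma \ref{lem:Toeplitz_spectrum} supplies exactly this. If the spectrum is finite, the diagonal argument reduces to finitely many extractions.
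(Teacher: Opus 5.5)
Your proposal is correct and follows essentially the same route as the paper: the pure point, at most countable spectral decomposition $T_{\Ub_0} = \sum_\mu \mu P_\mu$, a Cantor diagonal extraction so that each phase $\eu^{-ia'_n\mu}$ converges on the unit circle, and the limit $\Om_\infty = \sum_\mu \omega(\mu) P_\mu$. You also write out the tail estimate for strong convergence and the unitarity of $\Om_\infty$, which the paper treats as evident.
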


\begin{remark*}
For notational convenience, we use $(a'_n)$ to denote subsequences of $(a_n)$, since the real numbers $a_n$ appear in $\Om(a_n) = \eu^{-i a_n T_{\Ub_0}}$ and so introducing further subindices in the exponent would lead to clumsy notation.
\end{remark*}

\begin{proof}
From Lemma \ref{lem:Toeplitz_spectrum}, we recall the spectral representation
$$
T_{\Ub_0} = \sum_{\mu \in \sigma(T_{\Ub_0})} \mu P_\mu,
$$
where $P_\mu$ denotes the orthogonal projector onto the eigenspace $\ker(T_{\Ub_0} - \mu \id)$. As a consequence from standard spectral calculus, we have
$$
\Om(a_n) = \eu^{-i a_n T_{\Ub_0}} = \sum_{\mu \in \sigma(T_{\Ub_0})} \eu^{-i a_n \mu} P_\mu.
$$
Since $\sigma(T_{\Ub_0})$ is at most countable, there exist a subsequence denoted by $(a'_n)$ and some at most countable set $\{ \omega(\mu) \}_{\mu \in \sigma(T_{\Ub_0})} \subset \Ss^1$ such that
$$
\eu^{-i a'_n \mu} \to \omega(\mu) \quad \mbox{as $n \to \infty$} \quad \mbox{for any $\mu \in \sigma(T_{\Ub_0})$}.
$$
Consequently, the unitary operators $\Om(a'_n)=\eu^{-i a'_n T_{\Ub_0}}$  converge strongly to 
$$
\Om_\infty := \sum_{\mu \in \sigma(T_{\Ub_0})} \omega(\mu) P_\mu,
$$
which is evidently a unitary operator in $L^2_+$. 
\end{proof}

As before, let us denote by
$$
\Ub(t) = \Phi_t(\Ub_0) = (\Us(t) \Pi \Ub_0) + (\Us(t) \Pi \Ub_0)^* - \Mean(\Ub_0)
$$
 the solution given by the flow map for (HWM) as provided by Theorem \ref{thm:gwp}. We deduce  the following convergence lemma.

\begin{lem} \label{lem:ap_convergence}
Let $(t_n)$ be a sequence in $\R$ such that $\Om(t_n) \to \Om_\infty$ strongly with some unitary map $\Om_\infty :L^2_+ \to L^2_+$. Then $\Ub(t_n) \to \Ub_\infty$ in $H^{1/2}$, where the limit $\Ub_\infty$ belongs to $H^{1/2}(\T; \Gr_k(\C^d))$ and it satisfies 
$$
E[\Ub_\infty] = E[\Ub_0], \quad \Pi \Ub_\infty(z) = \Mean ( (\id - z  \Om_\infty S^*)^{-1} \Pi \Ub_0) \quad \mbox{for $z \in \D$}.
$$

\end{lem}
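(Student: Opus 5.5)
We set $\Sigma_\infty := S\Om_\infty^*$, an isometry with $\Sigma_\infty^* = \Om_\infty S^*$. Its defect projection is $P_{\Sigma_\infty} = \id - S\Om_\infty^*\Om_\infty S^* = \Mean$. We then define
$$
(\Us_\infty \Fb)(z) := \Mean\big((\id - z\Om_\infty S^*)^{-1}\Fb\big),
\qquad
\Ub_\infty := \Us_\infty \Pi\Ub_0 + (\Us_\infty\Pi\Ub_0)^* - \Mean(\Ub_0).
$$
The plan has three steps. First, show $\Ub(t_n)\weakto \Ub_\infty$ in $H^{1/2}$. Second, show $\Us_\infty$ is unitary. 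Third, upgrade to strong convergence via energy conservation.

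\textbf{Step 1: weak convergence.} Since $\Om(t_n)\to\Om_\infty$ strongly, we have $\Om(t_n)S^*\to \Om_\infty S^*$ strongly. All these isometries' adjoints share the kernel $\C^{d\times d}$. Lemma \ref{lem:Us_Sigma_conv} then gives $\Us(t_n)\Pi\Ub_0 \weakto \Us_\infty \Pi\Ub_0$ in $L^2_+$. The sequence $\Ub(t_n)$ is bounded in $H^{1/2}$ by Theorem \ref{thm:gwp}. Rellich compactness therefore gives $\Ub(t_n)\to\Ub_\infty$ in $L^2$ and weakly in $H^{1/2}$. Passing to an a.e.-convergent subsequence shows $\Ub_\infty(\theta)\in\Gr_k(\C^d)$ a.e., and weak lower semicontinuity gives $E[\Ub_\infty]\le E[\Ub_0]$. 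The claimed formula for $\Pi\Ub_\infty$ holds by construction.

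\textbf{Step 2: unitarity of $\Us_\infty$ (main obstacle).} The difficulty is that $\Om_\infty$ need not be of the form $\eu^{-isT_{\Ub_0}}$, so Theorem \ref{thm:stony} and Lemma \ref{lem:Us_kernel} do not apply verbatim. However, the proofs only use properties that $\Om_\infty$ retains. We use Lemma \ref{lem:UsSigma_unitary} and reduce to showing $\ker\Us_\infty=\{0\}$.
\begin{itemize}
\item $\Om_\infty=\sum_\mu \omega(\mu)P_\mu$ is a unitary commuting with $T_{\Ub_0}$, acting on $\Es_\mu(0)$ as multiplication by $\omega(\mu)$ with $|\omega(\mu)|=1$. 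Any strong limit of $\Om(t_n)$ has this form.
\item Pass to the limit in $T_{\Ub(t_n)}\Us(t_n)=\Us(t_n)T_{\Ub_0}$. We have $T_{\Ub(t_n)}\to T_{\Ub_\infty}$ strongly, because $\Ub(t_n)\to\Ub_\infty$ in $L^2$ with a uniform $L^\infty$ bound. We also have $\Us(t_n)\weakto\Us_\infty$. Self-adjointness then yields $T_{\Ub_\infty}\Us_\infty=\Us_\infty T_{\Ub_0}$, exactly as in Lemma \ref{lem:energy_conservation}.
\item Repeat the near $S^*$-invariance argument of Lemma \ref{lem:eigenfunctions}, Step 2. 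Take $\Fb\in\Es_\mu(0)\setminus\{0\}$ with first nonzero coefficient index $n$. Then $(\Om_\infty S^*)^n\Fb=\omega(\mu)^n (S^*)^n\Fb$. Combined with the intertwining $S^*\Us_\infty=\Us_\infty\Om_\infty S^*$ from Lemma \ref{lem:UsSigma} and the fact that $\Mean(\Us_\infty \Gb)=\Mean(\Gb)$, this gives $\Us_\infty\Fb\neq0$.
\item The orthogonality argument of Lemma \ref{lem:Us_kernel} then applies, using pure point spectrum of $T_{\Ub_0}$ and self-adjointness of $T_{\Ub_\infty}$. This gives $\ker\Us_\infty=\{0\}$.
\end{itemize}

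\textbf{Step 3: conclusion.} With $\Us_\infty$ unitary, the intertwining relation gives $K_{\Ub_\infty}=\Us_\infty K_{\Ub_0}\Us_\infty^*$. Hence $E[\Ub_\infty]=\Tr K_{\Ub_\infty}=\Tr K_{\Ub_0}=E[\Ub_0]$. Mean conservation gives $\Mean(\Ub_\infty)=\Mean(\Ub_0)$. Since $E[\Ub(t_n)]=E[\Ub_0]$, we obtain $\|\Ub(t_n)\|_{H^{1/2}}\to\|\Ub_\infty\|_{H^{1/2}}$. Together with weak convergence, this gives strong convergence in $H^{1/2}$. Because the limit is identified uniquely, the whole sequence converges, not just a subsequence.
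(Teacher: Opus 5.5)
Your proposal is correct and follows the paper's proof essentially step for step. You obtain weak/$L^2$ convergence from Lemma \ref{lem:Us_Sigma_conv} and Rellich compactness. You get triviality of $\ker \Us_\infty$ by passing to the limit in $T_{\Ub(t_n)}\Us(t_n)=\Us(t_n)T_{\Ub_0}$ and rerunning the near $S^*$-invariance and orthogonality arguments of Lemmas \ref{lem:eigenfunctions} and \ref{lem:Us_kernel}. You then upgrade to strong $H^{1/2}$ convergence through $K_{\Ub_\infty}=\Us_\infty K_{\Ub_0}\Us_\infty^*$ and energy equality. Your use of Lemma \ref{lem:UsSigma_unitary} with $\Sigma_\infty=S\Om_\infty^*$, together with your observation that any strong limit $\Om_\infty$ acts on each eigenspace of $T_{\Ub_0}$ by a unimodular scalar, is slightly more careful than the paper. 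The paper cites Theorem \ref{thm:stony}, although $\Om_\infty$ need not be of the form $\eu^{-isT_{\Ub_0}}$.
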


\begin{proof}
We adapt the arguments used in Section \ref{sec:gwp} to show this result.

\medskip
\textbf{Step 1.} Since $\Om(t_n) \to \Om_\infty$ strongly and by mirroring the proof of Lemmas \ref{lem:GWP_minor} and \ref{lem:extension}, we deduce that
$$
\mbox{$\Ub(t_n) \to \Ub_\infty$ in $L^2$ and $\Ub(t_n) \weakto \Ub_\infty$ in $H^{1/2}$}.
$$
Here the limit $\Ub_\infty$ belongs to $H^{1/2}(\T; \Gr_k(\C^d))$ and satisfies the explicit formula
 $$
 \Pi \Ub_\infty(z) = \Mean \left ( ( \id - z\Om_\infty S^*)^{-1} \Pi \Ub_0 \right ) \quad \mbox{for $z \in \D$}.
 $$
Furthermore, from Lemma \ref{lem:UsSigma} we obtain the intertwining relation
\be \label{eq:ap1}
S^* \Us_\infty = \Us_\infty \Om_\infty S^*,
\ee
where $\Us_\infty : L^2_+ \to L^2_+$ is the contraction given by
$$
(\Us_\infty \Fb)(z) = \Mean ((\id-z \Om_\infty S^*)^{-1} \Fb).
$$
for any $\Fb \in L^2_+$ and $z \in \D$.

\medskip
\textbf{Step 2.} We claim that
$$
\mbox{$\Us_\infty : L^2_+ \to L^2_+$ is unitary.}
$$
In view of Theorem \ref{thm:stony}, this amounts to showing that $\ker \, \Us_\infty = \{ 0 \}$ holds. To see this, let us define the Toeplitz operator $T_{\Ub_\infty}$ with symbol $\Ub_\infty \in H^{1/2}(\T; \Gr_k(\C^d))$. Since $\Us(t_n) \to \Us_\infty$ weakly and $T_{\Ub(t_n)} \to T_{\Ub_\infty}$ strongly as operators, we can pass to the limit $n \to \infty$ in the Lax evolution $T_{\Ub(t_n)} \Us(t_n) = \Us(t_n) T_{\Ub_0}$. This yields 
\be \label{eq:ap2}
T_{\Ub_\infty} \Us_\infty = \Us_\infty T_{\Ub_0}.
\ee
Now, by exactly following the arguments in the proof of Lemma \ref{lem:eigenfunctions} and using the intertwining relation \eqref{eq:ap1} above, we infer that
$$
T_{\Ub_0} \Fb = \mu \Fb \quad \mbox{and} \quad \Fb \neq 0 \quad \Rightarrow \quad T_{\Ub_\infty} \Us_\infty \Fb = \mu \Us_\infty \Fb \quad \mbox{and} \quad \Us_\infty \Fb \neq 0.
$$ 
Thus the map $\Us_\infty : \ker (T_{\Ub_0} - \mu \id) \to \ker (T_{\Ub_\infty} - \mu \id)$ is injective and, by a direct adaptation of the proof of Lemma \ref{lem:Us_kernel}, we deduce that $\ker \, \Us_\infty = \{ 0 \}$ is trivial. This shows that $\Us_\infty$ is unitary by Theorem \ref{thm:stony}.

\medskip
\textbf{Step 3.} Since $\Us_\infty$ is unitary, we see from \eqref{eq:ap2} that $T_{\Ub_\infty} = \Us_\infty T_{\Ub_0} \Us_\infty^*$. Therefore $K_{\Ub_\infty} = \Us_\infty K_{\Ub_0} \Us_\infty^*$ with the trace-class operator $K_{\Ub_\infty} = \id - T_{\Ub_\infty}^2$ and we deduce that
$$
E[\Ub_\infty] = \Tr (K_{\Ub_\infty}) =  \Tr(\Us_\infty K_{\Ub_0} \Us_\infty^*) =  \Tr(K_{\Ub_0}) = E[\Ub_0].
$$
In view of $E[\Ub(t_n)] = E[\Ub_0]$ this implies that we must have strong convergence  $\Ub(t_n) \to \Ub_\infty$ in $H^{1/2}$.
\end{proof}

\subsection*{Proof of Theorem \ref{thm:ap}}
Let $\Ub_0 \in H^{1/2}(\T; \Gr_k(\C^d))$ be an initial datum for (HWM). As usual, we denote by $\Ub(t) = \Phi_t(\Ub_0) \in C(\T;H^{1/2})$ the solution provided by Theorem \ref{thm:gwp}.
 
To show that $\Ub \in \AP(\R; H^{1/2})$ holds, we use Bochner's criterion for almost periodicity. Thus we have to prove that, for every sequence $(a_n)$ in $\R$, there exists a subsequence $(a'_n)$ such that the sequence of bounded and continuous functions 
$$
\R \to H^{1/2}(\T; \C^{d \times d}), \quad t \mapsto \Ub(t+a'_n) 
$$
is uniformly convergent on $\R$ as $n \to \infty$. 

Indeed, given a sequence $(a_n)$ in $\R$, we choose the subsequence $(a'_n)$ provided by Lemma \ref{lem:ap_compact} ensuring that
$$
\Om(a'_n) \to \Om_\infty 
$$
strongly as operator with some unitary map $\Om_\infty$ in $L^2_+$. For $t \in \R$, we define 
$$
\Ub_\infty(t) := \Pi \Ub_\infty(t) + (\Pi \Ub_\infty(t))^* - \Mean(\Pi \Ub_0) 
$$
where we set
$$
\Pi \Ub_\infty(t,z) := \Mean \left ( \id- z \Om(t) \Om_\infty S^*)^{-1} \Pi \Ub_0 \right ) \quad \mbox{with $\Om(t)= \eu^{-it T_{\Ub_0}}$}.
$$
 Evidently, we have that $\Om(t) \Om(a'_n) \to \Om(t) \Om_\infty$ strongly for any $t \in \R$. Thus by Lemma \ref{lem:ap_convergence} applied to the sequence $(t_n) = (t +a'_n)$, we deduce 
$$
\mbox{$\Ub(t+a'_n) \to \Ub_\infty(t)$ in $H^{1/2}$ for any $t \in \R$}.
$$ 
 Now, we claim that we have in fact uniform convergence on $\R$, i.e.,
\be \label{ineq:ap_sup}
\sup_{t \in \R} \| \Ub(t+a'_n) - \Ub_\infty(t) \|_{H^{1/2}} \to 0 \quad \mbox{as} \quad n \to \infty.
\ee
Note that the sup is finite, since $\Ub \in BC(\R;H^{1/2})$ and $\sup_{t \in \R} \| \Ub_\infty(t) \|_{H^{1/2}}  < \infty$ thanks to $E[\Ub_\infty(t)] = E[\Ub_0]$ for all $t \in \R$ by Lemma \ref{lem:ap_convergence}. Thus we  can find a sequence $(t_n)$ in $\R$ such that
\be \label{ineq:ap_sup2}
\sup_{t \in \R} \| \Ub(t+a'_n) - \Ub_\infty(t) \|_{H^{1/2}} \leq \| \Ub(t_n + a'_n) - \Ub_\infty(t_n) \|_{H^{1/2}} + 2^{-n}.
\ee
 Applying Lemma \ref{lem:ap_compact} again, we can extract a subsequence $(t'_n)$ such that $\Om(t'_n) \to \widetilde{\Om}_\infty$ strongly with some unitary operator $\widetilde{\Om}_\infty$. By the group property of $\{ \Om(t) \}_{t \in \R}$, we immediately find that
 $$
\mbox{$\Om(t'_n + a'_n) = \Om(t'_n) \Om(a'_n) \to \widetilde{\Om}_\infty \Om_\infty$ strongly.}
$$
By Lemma \ref{lem:ap_convergence}, this implies that 
$$
\mbox{$\Ub(t'_n + a'_n) \to \Ub_\infty$ in $H^{1/2}$},
$$
where the limit $\Ub_\infty$ satisfies
\be \label{eq:U_infinity}
\Pi \Ub_\infty(z) = \Mean((\id- z \widetilde{\Om}_\infty \Om_\infty S^*)^{-1} \Pi \Ub_0) \quad \mbox{for $z \in \D$}.
\ee
On the other hand, from the explicit expression for $\Ub_\infty(t)$, we see that
$$
\Pi \Ub_\infty(t'_n,z) = \Mean ((\id-z \Om(t'_n) \Om_\infty S^*)^{-1} \Pi \Ub_0) \quad \mbox{for $z \in \D$}.
$$
Since $\Om(t'_n) \Om_\infty \to \widetilde{\Om}_\infty \Om_\infty$ strongly, we also deduce from Lemma \ref{lem:ap_convergence}  that 
$$
\mbox{$\Ub(t'_n) \to \Ub_\infty$ in $H^{1/2}$},
$$ 
where the limit $\Ub_\infty$ is again given by \eqref{eq:U_infinity}. Hence we have shown that
$$
 \| \Ub(t'_n+a'_n) - \Ub_\infty(t'_n) \|_{H^{1/2}} \to 0.
$$
By passing to the subsequence $(t'_n)$ in \eqref{ineq:ap_sup2} and taking the limit $n \to \infty$, we obtain the claimed uniform convergence \eqref{ineq:ap_sup}. This shows that $\Ub \in \AP(\R; H^{1/2})$ holds.

Finally, we remark that the Poincar\'e recurrence as stated in Theorem \ref{thm:ap} is an immediate consequence of Bohr's characterization of almost periodicity. Moreover, by the standard fact that almost periodic functions are uniformly continuous, we deduce that $\{ \Phi_t(\Ub_0) : t \in \R \}$ is a relatively compact subset in $H^{1/2}$.

The proof of Theorem \ref{thm:ap} is now complete. \hfill $\Box$

\begin{appendix}

\section{LWP in $H^s$ with $s > \frac 3 2$} 

\label{app:lwp}

\begin{lem}[LWP in $H^s$ with $s > \frac 3 2$] \label{lem:lwp}
Let $d \geq 2$ and $1 \leq k \leq d-1$ be integers and assume that $s > \frac{3}{2}$. Then, for every $\Ub_0 \in H^s(\T; \Gr_k(\C^d))$, there exists a unique solution $\Ub \in C(I; H^s)$ of \eqref{eq:HWM} with $\Ub(0)= \Ub_0$, and $I \subset \R$ denotes its corresponding maximal time interval of existence. 

Moreover, we have global-in-time existence, i.e., $I = \R$, provided that 
$$
\sup_{t \in I} \| \Ub(t) \|_{H^s} < \infty.
$$
\end{lem}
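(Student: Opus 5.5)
The plan is the standard one for quasilinear hyperbolic systems: an energy method in $H^s$ with Kato--Ponce commutator estimates, a compactness argument for existence, and a Lipschitz estimate in $L^2$ for uniqueness. The structural input is that the constraint $\Ub^2 = \mathds{1}_d$ makes the top-order part of the nonlinearity skew-symmetric, so that no derivative is lost in the energy estimate.

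\textbf{Regularization and energy estimate.} First I would rewrite the equation as $\pt_t \Ub = -\frac{i}{2}(\Ub |D| \Ub - (|D|\Ub)\Ub)$ and regularize it, for instance with a Friedrichs mollifier $J_\eps$:
$$
\pt_t \Ub_\eps = -\tfrac{i}{2} J_\eps [J_\eps \Ub_\eps, |D| J_\eps \Ub_\eps].
$$
This is an ODE in $H^s$, so it has local solutions. I will not try to preserve the Grassmannian constraint along the regularized flow; it will be recovered only in the limit. The key step is the a-priori bound
$$
\frac{d}{dt}\|\Ub\|_{H^s}^2 \lesssim \|\pt_\theta \Ub\|_{L^\infty}\|\Ub\|_{H^s}^2 + \text{lower order},
$$
uniformly in $\eps$. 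To get it, apply $\langle D\rangle^s$ and write $\langle D\rangle^s(\Ub |D|\Ub) = \Ub\, |D|\langle D\rangle^s \Ub + [\langle D\rangle^s, \Ub]|D|\Ub$. The commutator is handled by Kato--Ponce and costs $\|\pt_\theta\Ub\|_{L^\infty}\|\Ub\|_{H^s}$. For the top-order term $\Re\,\Tr\int \Vb^*\,(-\tfrac{i}{2})(\Ub |D|\Vb - (|D|\Vb)\Ub)$ with $\Vb = \langle D\rangle^s\Ub$, I would use the self-adjointness of $\Ub$ and of $|D|$. After taking traces, this reduces to terms like $\Tr\int \Vb^*[\Ub,|D|]\Vb$, and $[\Ub,|D|]$ is bounded on $L^2$ with norm $\lesssim\|\pt_\theta\Ub\|_{L^\infty}$ (Calder\'on commutator). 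This is the main obstacle. Since the regularized $\Ub_\eps$ is not exactly self-adjoint and $\pt_t$ might not preserve $\Ub^*=\Ub$, I would first check that the equation preserves Hermitian symmetry: $[\Ub,|D|\Ub]^* = -[\Ub,|D|\Ub]$ for Hermitian $\Ub$, so $-\tfrac{i}{2}[\cdot,\cdot]$ is Hermitian. If needed, I would symmetrize the regularization as $J_\eps$ applied symmetrically. Since $s>\frac32$, Sobolev embedding gives $\|\pt_\theta\Ub\|_{L^\infty}\lesssim\|\Ub\|_{H^s}$, so Gronwall yields a uniform existence time $T(\|\Ub_0\|_{H^s})$.

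\textbf{Passage to the limit and the constraint.} The uniform $H^s$ bounds, an $L^2$ Cauchy estimate for $\Ub_\eps$ (obtained by the same symmetric structure at the $L^2$ level), and interpolation give a limit $\Ub\in L^\infty H^s\cap C H^{s'}$ for $s'<s$ solving \eqref{eq:HWM}. The constraint is recovered as follows. Setting $\Vb=\Ub^2-\mathds{1}_d$, a direct computation from the equation gives a linear equation for $\Vb$ with coefficients controlled by $\|\Ub\|_{C^1}$. With $\Vb(0)=0$, an $L^2$ Gronwall argument gives $\Vb\equiv0$, and $\Tr\Ub$ is constant by continuity. Uniqueness follows from the $L^2$ difference estimate for two solutions, using the same commutator bound. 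Continuity in time with values in $H^s$, and continuous dependence, are obtained by the Bona--Smith argument, which is standard once the estimates above are available.

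\textbf{Blow-up alternative.} Finally, the local existence time depends only on $\|\Ub_0\|_{H^s}$. So if $\sup_{t\in I}\|\Ub(t)\|_{H^s}<\infty$, we can restart the solution near each endpoint of $I$ with a uniform step, which contradicts maximality unless $I=\R$.
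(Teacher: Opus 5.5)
Your proposal is correct and is essentially the argument the paper relies on. The paper only cites the Kato-type iteration scheme of \cite{GeLe-25}, with Kato--Ponce and fractional Leibniz estimates transferred to $\T$; your Friedrichs regularization with compactness and Bona--Smith is an equivalent standard implementation, resting on the same $H^s$ energy estimate and the same skew-adjointness of the top-order term. Two small corrections that do not affect the argument: that skew-adjointness comes from $\Ub^*=\Ub$ (left and right multiplication by a Hermitian matrix are self-adjoint for $\Tr(AB^*)$), not from $\Ub^2=\mathds{1}_d$, and your commutator, uniqueness and constraint estimates involve $\||D|\Ub\|_{L^\infty}$ in addition to $\|\pt_\theta\Ub\|_{L^\infty}$ (respectively $\|\Ub\|_{C^1}$), which is harmless since both are bounded by $\|\Ub\|_{H^s}$ for $s>\frac32$.
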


\begin{proof}
The proof of Lemma \ref{lem:lwp} follows from a direct adaptation of  \cite{GeLe-25}[Appendix D], where the analogous local well-posedness result for \eqref{eq:HWM} posed on $\R$ for $H^s$-data with $s > \frac 3 2$ is proved by a Kato-type iteration scheme. Note that the commutator estimates and fractional Leibniz rule used in \cite{GeLe-25} can be directly adapted to the setting on the torus; see, e.g., \cite{BeOhTe-25} for a systematic treatment of fractional Leibniz rule estimates on $\T$ from the corresponding estimates for functions on $\R$. 
\end{proof}

\begin{remark*}
A closer inspection of the proof shows that an a-priori Lipschitz bound 
$$
\sup_{t \in I} \| \pt_\theta \Ub(t) \|_{L^\infty} < \infty
$$
would be sufficient to deduce global existence in Lemma \ref{lem:lwp} above. However, as already pointed out in the introduction in Section 1, the Lax structure for \eqref{eq:HWM} fails to provide such an a-priori bound. 
\end{remark*}

Furthermore, by adapting the discussion in \cite{GeLe-25} for \eqref{eq:HWM} on $\R$, we deduce the following result for the operator-valued evolution equation
\be \label{eq:ode}
\frac{d}{dt} \Us(t) = B_{\Ub(t)} \Us(t) \quad \mbox{for $t \in I$}, \quad \Us(0) = \id
\ee
where $I \subset \R$ is a time interval with $0 \in I$ and given $\Ub \in C(I; H^s(\T; \Gr_k(\C^d)))$ with some $s > \frac{3}{2}$ together with the unbounded and (formally) skew-adjoint operator
$$
B_{\Ub} = \frac{i}{2} ( D  T_{\Ub} + T_{\Ub}  D ) - \frac{i}{2} T_{|D| \Ub}
$$
acting on $L^2_+ = L^2_+(\T; \C^{d \times d})$ with $D = -i \pt_\theta$.

\begin{lem}
Let $d \geq 2$ and $1 \leq k \leq d-1$ be integers and suppose that $\Ub \in C(I; H^s(\T; \Gr_k(\C^d)))$ solves \eqref{eq:HWM} with some $s > \frac{3}{2}$ on the time interval $I \subset \R$ with $0 \in I$. Then there exists a unique solution $\Us : I \to \Ls(L^2_+)$ of \eqref{eq:ode} with the following properties.
\begin{enumerate}
\item[(i)] The map $I \mapsto L^2_+$ with $t \mapsto \Us(t) \Fb$ is continuous for every $\Fb \in L^2_+$.
\item[(ii)] The equation $\pt_t \Us(t) = B_{\Ub(t)} \Us(t)$ holds in $H^{-1}_+(\T; \C^{d \times d})$ for any $t \in I$. 
\item[(iii)] $\Us(t) : L^2_+ \to L^2_+$ is a unitary map for any $t \in I$.
\item[(iv)] $\Us(t) \Fb \in H^1_+$ for any $\Fb \in H^1_+ = H^1 \cap L^2_+$.
\item[(v)] The operator $B_{\Ub(t)} : H^1_+ \subset L^2_+ \to L^2_+$ is essentially skew-adjoint.
\end{enumerate}
\end{lem}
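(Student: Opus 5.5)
We prove the lemma by approximating $B_{\Ub(t)}$ with bounded operators and running an energy method in $H^1_+$; this is the standard construction for a non-autonomous skew-adjoint evolution, adapted from the setting on $\R$ in \cite{GeLe-25}.

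\textbf{Symmetrized form.} Write $B_{\Ub} = -\frac{i}{2}(T_\Ub D + D T_\Ub) + \frac{i}{2}T_{|D|\Ub}$. Since $T_\Ub$ is self-adjoint and $D$ is self-adjoint on $H^1_+$, the operator $-\frac{i}{2}(T_\Ub D + D T_\Ub)$ is formally skew-symmetric. The remaining term $\frac{i}{2}T_{|D|\Ub}$ is bounded, because $s>\frac32$ gives $|D|\Ub\in H^{s-1}\subset L^\infty$, and it is skew-adjoint since $|D|\Ub$ is pointwise Hermitian. So $B_\Ub$ is skew-symmetric on $H^1_+$.

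\textbf{Commutator estimate.} The key estimate concerns $\Lambda=\langle D\rangle$ on $H^1_+$. We show that $[\Lambda, B_{\Ub}]$ extends to a bounded operator on $L^2_+$, with norm $\lesssim \|\Ub\|_{H^s}$. The reason is that the top-order part is $-\frac{i}{2}\{[\Lambda,T_\Ub]D + D[\Lambda,T_\Ub]\}$, and $[\Lambda,T_\Ub]$ behaves like $T_{\partial\Ub}$ up to smoothing. To make this rigorous we need a Kato--Ponce/Calderón-type commutator bound
$$\|[\Lambda,T_\Ub]\Fb\|_{H^1}\lesssim \|\Ub\|_{H^s}\|\Fb\|_{H^1},\qquad s>\tfrac32,$$
together with the symmetrized structure, which lets us absorb the extra derivative after pairing. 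This is the main obstacle. I would first try to prove the bound via paraproduct decomposition on $\T$, as in \cite{BeOhTe-25}, applied to $[\langle D\rangle, \Ub\,\cdot\,]$ and then projected by $\Pi$, which commutes with $\Lambda$.

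\textbf{Construction of $\Us(t)$.} Regularize with $B_\eps = J_\eps B_{\Ub(t)} J_\eps$, where $J_\eps=(1+\eps\Lambda)^{-1}$. Each $B_\eps$ is a bounded skew-adjoint operator, strongly continuous in $t$ because $\Ub\in C(I;H^s)$. Solving the linear ODE gives unitary propagators $\Us_\eps(t)$. The commutator estimate, which holds uniformly in $\eps$, yields via Gronwall a bound on $\|\Lambda \Us_\eps(t)\Fb\|$ that is uniform in $\eps$. Compactness together with the $L^2$-Cauchy property then produces a limit $\Us(t)$. This limit satisfies (i), (ii) in $H^{-1}_+$, (iv), and isometry. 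Running the same construction for the backward problem gives an isometric inverse, hence unitarity (iii). Uniqueness follows by an energy identity for the difference of two solutions, tested in $H^{-1}\times H^1$ duality using the skew-symmetry.

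\textbf{Essential skew-adjointness (v).} Fix $t$ and set $B=B_{\Ub(t)}$. It suffices to show that $\ker(B^*\mp 1)=\{0\}$. Take $\Gb\in L^2_+$ with $B^*\Gb=\pm\Gb$ in the distribution sense, and apply a Friedrichs mollifier $J_\eps$. Because the commutator $[J_\eps,B]$ is bounded uniformly in $\eps$ and converges strongly to $0$ (a Friedrichs lemma, which relies on the same commutator estimate), we get $\langle BJ_\eps\Gb|J_\eps\Gb\rangle\to \mp\|\Gb\|^2$. Skew-symmetry forces the real part of the left-hand side to vanish, so $\Gb=0$.
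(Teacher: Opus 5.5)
Your overall scheme (mollified generators $J_\eps B_{\Ub(t)}J_\eps$, an $\eps$-uniform $H^1$ bound, passage to the limit, the backward problem for surjectivity, duality for uniqueness, a Friedrichs argument for (v)) is the standard one; the paper only points to an adaptation of Lemma D.2 of \cite{GeLe-25}. The gap is in the step you call the main obstacle. As you set it up, it fails precisely in the range $\frac32<s<2$.

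\textbf{What fails.} A minor point first: $[\Lambda,B_\Ub]$ is not bounded on $L^2_+$. At high frequency it acts like $-T_{\pt_\theta\Ub}D$ (test on $\eu^{iN\theta}\Eb$ with $N\to\infty$). What Gronwall needs is $\|[\Lambda,B_\Ub]\Fb\|_{L^2}\lesssim\|\Fb\|_{H^1}$.

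The real problem is your proposed route to that bound. For a constant matrix $\Eb$ you have $[\Lambda,T_\Ub]\Eb=(\Lambda-1)(\Pi\Ub)\Eb$, and for $\Eb=\mathds{1}_d$ this lies in $H^1$ only if $\Pi\Ub\in H^2$. So $\|[\Lambda,T_\Ub]\Fb\|_{H^1}\lesssim\|\Ub\|_{H^s}\|\Fb\|_{H^1}$ is false for $s<2$.

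More fundamentally, the splitting into a skew-symmetric first-order part plus the ``bounded'' term $\frac{i}{2}T_{|D|\Ub}$ cannot give an $H^1$ estimate. First, $T_{|D|\Ub}\mathds{1}_d=D\Pi\Ub\notin H^1$ when $\Ub\notin H^2$, so this term is bounded on $L^2_+$ but not on $H^1_+$. Second, for $\widetilde B=-\frac{i}{2}(T_\Ub D+DT_\Ub)$ alone, take $\Fb=\Eb+\Gb$ in
\[
\tfrac{d}{dt}\|\Lambda\Fb\|^2=2\,\mathrm{Re}\langle[D,\widetilde B]\Fb|D\Fb\rangle .
\]
Since $D\Eb=0$, this contains the cross term $\mathrm{Re}\langle-\frac{i}{2}(D^2\Pi\Ub)\Eb\,|\,D\Gb\rangle$. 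That term is not bounded by $C|\Eb|\,\|\Gb\|_{H^1}$ unless $\Pi\Ub\in H^2$, and no pairing or symmetrization removes it.

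\textbf{The missing idea.} The two zeroth-order pieces only work together. Since $|D|\Ub-D\Ub=-2D\Pi_-\Ub$, you can write
\[
B_\Ub=-iT_\Ub D-iT_{D\Pi_-\Ub}.
\]
Commute with $D$ rather than $\Lambda$, so that $[D,T_\Vb]=T_{D\Vb}$ holds exactly:
\[
[D,B_\Ub]=-iT_{D\Ub}D-iT_{D^2\Pi_-\Ub}.
\]
The first term is bounded by $\|\pt_\theta\Ub\|_{L^\infty}\|D\Fb\|_{L^2}$. In the second, only $\widehat\Ub_m$ with $m<0$ and $\widehat\Fb_n$ with $n\ge|m|$ reach nonnegative frequencies. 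Hence $m^2\le|m|\,n$, and Young's inequality gives
\[
\|T_{D^2\Pi_-\Ub}\Fb\|_{L^2}\le\Big(\sum_m|m|\,|\widehat\Ub_m|_E\Big)\|D\Fb\|_{L^2}\lesssim\|\Ub\|_{H^s}\|D\Fb\|_{L^2}.
\]
This is exactly where $s>\frac32$ enters. Since $J_\eps$ commutes with $D$, you get
\[
\tfrac{d}{dt}\|D\Us_\eps(t)\Fb\|^2\lesssim\|\Ub(t)\|_{H^s}\|D\Us_\eps(t)\Fb\|^2
\]
uniformly in $\eps$. The rest of your construction then goes through, including the duality argument for uniqueness, which needs this $H^1$ propagation for the backward problem. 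The anti-holomorphic symbol $\Pi_-\Ub$ is the whole purpose of the correction $\frac{i}{2}T_{|D|\Ub}$ in $B_\Ub$, and it is also why $B_\Ub\Eb=0$.

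\textbf{Part (v).} Your argument is fine as written. The Friedrichs step only needs $[\Lambda,T_\Ub]$ bounded on $L^2_+$, for instance $\|[\Lambda,T_\Ub]\|\le\sum_m|m|\,|\widehat\Ub_m|_E$. There the zeroth-order terms may legitimately be treated as $L^2$-bounded.
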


\begin{remark*}
Note that (iii) means that there exists a unique skew-adjoint extension $B_{\Ub(t)}^* = -B_{\Ub(t)}$ with some domain $\dom(B_{\Ub(t)})$ and $H^1_+$ is dense in $\dom(B_{\Ub(t)})$  with respect to the graph norm of $B_\Ub(t)$.
\end{remark*}

\begin{proof}
The proof follows from a direct adaptation from the arguments in the proof of \cite{GeLe-25}[Lemma D.2]. See also the remark following \cite{GeLe-25}[Lemma D.2] about property (iii). 
\end{proof}

\section{Half-harmonic maps and solitary waves}

Let $d \geq 2$ and $1 \leq k \leq d-1$ be integers. We recall that critical points $\Qb \in H^{1/2}(\T; \Gr_k(\C^d))$ of the energy functional $E[\Ub]$ in \eqref{def:energy_HWM} are {\em half-harmonic maps} and they are (weak) solutions of 
\be \label{eq:Qb_hh}
[\Qb, |D| \Qb] = 0 .
\ee
Evidently, the half-harmonic maps   correspond to {\em stationary solutions} of \eqref{eq:HWM} with finite energy. As observed in \cite{LeSc-18}, we have indeed {\em traveling solitary waves} for \eqref{eq:HWM} with given velocity $v \in (-1,1)$. The corresponding profiles $\Qb_v \in H^{1/2}(\T; \Gr_k(\C^d))$ are (weak) solutions of \eqref{eq:Qb_hh} in the generalized form
\be
[\Qb, |D| \Qb] - \frac{i}{2} v\pt_\theta \Qb = 0.
\ee 

\subsubsection*{Special case $\Gr_1(\C^2) \cong \Ss^2$} In this case all half-harmonic maps are explicitly known in closed form.  Adapting this result to our matrix-valued formulation, we find that all half-harmonic maps $\Ub \in H^{1/2}(\T; \Gr_1(\C^2))$ are of the form
\be \label{eq:Qb_half}
\Qb(\theta) = U \left ( \begin{array}{cc} 0 & \ov{B(\eu^{i \theta})} \\ B(\eu^{i \theta}) & 0 \end{array} \right ) U^* 
\ee
where $U \in \C^{2 \times 2}$ is any constant unitary matrix, and $B: \D \to \D$ with
$$
B(z) = \eu^{i \phi} \prod_{k=1}^m B(z,a_k)
$$
is any finite {\em Blaschke product} of degree $m \in \N$ with zeros $a_1, \ldots, a_m \in \D$ and the Blaschke factors $B(z,a) =\frac{z-a}{1-\ov{a} z}$ for $a \neq 0$ and $B(z,0) = z$. From \eqref{eq:Qb_half} a direct calculation shows that {\em energy quantization} of half-harmonic maps with
$$
E[\Qb] = \pi \cdot m
$$
where $m$ is the degree of the Blaschke product in $\Qb$. The case $m=0$ yields the trivial case of constant half-harmonic maps, whereas $m=1$ corresponds to ground state half-harmonic maps (i.e.~nontrivial half-harmonic maps that globally minimize the energy). Moreover, by translating the results in \cite{LeSc-18} into matrix-valued notation, we recall that the traveling solitary profiles $\Qb_v$ with $v \in (-1,1)$ can be written as
\be
\Qb_v(\theta) = U \left ( \begin{array}{cc} v & (1-v^2)^{1/2} \ov{B(\eu^{i \theta})} \\ (1-v^2)^{1/2} B(\eu^{i \theta}) & -v \end{array} \right ) U^*,
\ee
with the energy $E[\Qb_v] = (1-v^2) \cdot \pi \cdot m$. Note that $E[\Qb_v] \to 0$ as $|v| \to 1$, and hence we can construct traveling solitary waves with arbitrarily small energy.

\subsubsection*{General case $\Gr_k(\C^d)$} For the general Grassmannians $\Gr_k(\C^d)$ as targets, we can easily build half-harmonic maps as follows. Let $d \geq 2$ and $1 \leq k \leq d-1$ be integers, where without loss of generality we can assume that $d \geq 3$ holds, since the remaining case $\Gr_1(\C^2)$ is already understood (see above). To construct half-harmonic maps from $\T$ into $\Gr_k(\C^d)$, we use that elements in $\Gr_1(\C^2)$ can be naturally embedded into $\Gr_k(\C^d)$ as follows. Let $\Qb \in H^{1/2}(\T; \Gr_1(\C^2))$ be a half-harmonic map, cf.~\eqref{eq:Qb_half}, and define 
$$
\widetilde{\Qb}(\theta) = \left ( \begin{array}{cc} \Qb(\theta) & 0 \\ 0 & J  \end{array} \right ),
$$ 
where $J$ is a constant matrix in $\Gr_{k-2}(\C^{d-2})$, e.g., take 
$$
J=\left ( \begin{array}{cc} \mathds{1}_p & 0 \\ 0 & -\mathds{1}_q  \end{array} \right )
$$ 
with $p,q \in \{0,\ldots, d-2 \}$ such that $\Tr(J) = p-q = d-2k$. We readily check that $\widetilde{\Qb} \in H^{1/2}(\T; \Gr_k(\C^d))$ solves \eqref{eq:Qb_half} and thus it is a half-harmonic map. Likewise, we can easily generalize this construction to obtain traveling solitary waves for \eqref{eq:HWM} with any target $\Gr_k(\C^d)$ by using the known case $\Gr_1(\C^2)$.

Finally, we remark that is is an interesting open question whether the construction above yields (up to symmetries) {\em all} half-harmonic maps from $\T$ to $\Gr_k(\C^d)$. We hope to address this question in future work.

\end{appendix}

\bibliographystyle{siam}
\bibliography{HWM_Bibliography}

\end{document}